\documentclass[10.5pt]{article}
\usepackage{mathrsfs}
\usepackage{amsfonts}
\usepackage{enumerate}
\usepackage{cite} 
\usepackage{url}  
\usepackage{ifthen}  
\usepackage{multicol}   
\usepackage[utf8]{inputenc} 
\usepackage{amsthm}
\usepackage{amsmath}
\usepackage{amsfonts}
\usepackage{amssymb}
\usepackage{bm}
\usepackage{bbm}
\usepackage{latexsym}
\usepackage{indentfirst}
\usepackage{graphicx}
\usepackage{leftidx}
\usepackage{epsfig}
\usepackage{epstopdf}
\usepackage{float}
\usepackage{geometry}
\usepackage{rotating}
\usepackage{tikz}
\usetikzlibrary{decorations.pathmorphing} 
\usetikzlibrary{matrix} 
\usetikzlibrary{arrows} 
\usetikzlibrary{calc} 

\tikzstyle{block} = [draw,rectangle,thick,minimum height=2em,minimum width=2em]
\tikzstyle{sum} = [draw,circle,inner sep=0mm,minimum size=2mm]
\tikzstyle{connector} = [->,thick]
\tikzstyle{line} = [thick]
\tikzstyle{branch} = [circle,inner sep=0pt,minimum size=1mm,fill=black,draw=black]
\tikzstyle{guide} = []
\tikzstyle{snakeline} = [connector, decorate, decoration={pre length=0.2cm,
                         post length=0.2cm, snake, amplitude=.4mm,
                         segment length=2mm},thick, magenta, ->]


\makeatletter
\def\leftharpoonfill@{\arrowfill@\leftharpoonup\relbar\relbar}
\def\rightharpoonfill@{\arrowfill@\relbar\relbar\rightharpoonup}
\newcommand\rbjt{\mathpalette{\overarrow@\rightharpoonfill@}}
\newcommand\lbjt{\mathpalette{\overarrow@\leftharpoonfill@}}
\makeatother

\theoremstyle{plain}
\newtheorem{theorem}{Theorem}[section]
\newtheorem{lemma}{Lemma}[section]
\newtheorem{proposition}{Proposition}[section]

\newtheorem{remark}{Remark}[section]

\theoremstyle{definition}
\newtheorem{definition}{Definition}[section]

\urlstyle{rm}

\def\includegraphics{}

\def\wt{\widetilde}

\def\tilde{\widetilde}
\def\hat{\widehat}
\def\bar{\overline}

\def\cd{\cdot}



\def\e{\varepsilon}
\def\z{\zeta}

\def\l{\lambda}






\def\cA{{\cal A}}
\def\cB{{\cal B}}
\def\cC{{\cal C}}
\def\cD{{\cal D}}

\def\cF{{\cal F}}
\def\cG{{\cal G}}

\def\cJ{{\cal J}}

\def\cQ{{\cal Q}}
\def\cR{{\cal R}}
\def\cS{{\cal S}}

\def\cU{{\cal U}}
\def\cV{{\cal V}}



\def\hE{\mathbb{E}}

\def\hH{\mathbb{H}}

\def\hP{\mathbb{P}}

\def\hR{\mathbb{R}}



\def\ud{\mathrm{d}}

\def\qq{\qquad}
\def\q{\quad}

\def\QEDopen{{\setlength{\fboxsep}{0pt}\setlength{\fboxrule}{0.2pt}\fbox{\rule[0pt]{0pt}{1.3ex}\rule[0pt]{1.3ex}{0pt}}}}
\def\QED{\QEDopen}

\def\endproof{\hspace*{\fill}~\QED\par\endtrivlist\unskip}

\newenvironment{keywords}{{\bf Key words: }}{}

\textwidth= 160mm
\textheight= 240mm
\oddsidemargin=2mm
\topskip 0.3cm
\topmargin=-0.5in

\begin{document}

\title{Linear-Quadratic Mixed Stackelberg-Nash Stochastic Differential Game with Major-Minor Agents\thanks{J. Huang acknowledges the financial support by RGC Grant PolyU 153005/14P, 153275/16P.}}

\author{Kehan Si\thanks{School of Mathematics, Shandong University, Jinan, Shandong Province, 250100, China (sikehan@mail.sdu.edu.cn).} \, \  James Huang\thanks{Department of Applied Mathematics, The Hong Kong Polytechnic University, Hong Kong, China (james.huang@polyu.edu.hk).} \, \  Zhen Wu\thanks{School of Mathematics, Shandong University, Jinan, Shandong Province, 250100, China (wuzhen@sdu.edu.cn).}}

\maketitle

\begin{abstract}We consider a controlled linear-quadratic (LQ) large-population system with mixture of three types agents: major leader, minor leaders and minor followers. The Stackelberg-Nash-Cournot (SNC) approximate equilibrium is studied by a major-minor mean-field game (MFG) coupled with a leader-follower Stackelberg game. By variational method, the SNC approximate equilibrium strategy can be represented by some forward-backward-stochastic-differential-equations (FBSDEs) in the open-loop sense. And we pay great effort to give the feedback form of the open-loop strategy by some Riccati equations. \end{abstract}

\begin{keywords}
Stackelberg-Nash-Cournot approximate equilibrium, Mean-field game, FBSDE, Leader-follower game, Major-minor agent, Open-loop strategy, Closed-loop strategy.
\end{keywords}

\section{Introduction}On a given finite time horizon $[0,T],$ let $(\Omega,\cF, \mathbb{P})$ be a complete probability space on which a $(1+N_l+N_f)$-dimensional standard Brownian motion $\{W_0(t),W_i(t)$, $\widetilde{W}_j(t)\}_{0 \leq t \leq T}$ is defined. 
In this paper, we consider a large-population system involving $(1+N_l+N_f)$ individual agents (where $N_l$ and $N_f$ are very large) which are mixed with three types: the major-leader, denoted by $\cA_0$, minor-leaders $\cA_i^l, 1\leq i\leq N_l$ and the followers $\cA_j^f, 1\leq j\leq N_f$. The dynamics of $\cA_0,\{\cA_i^l\}_{i=1}^{N_{l}}, \{\cA_j^f\}_{j=1}^{N_{f}}$ are given respectively by the following controlled linear stochastic differential equations:
\begin{equation}\cA_0:\left\{\begin{aligned}\label{C6e1}
&\ud X_0(t)=\{A_0X_0(t)+B_0u_0(t)+E_0^1X^{(N_l)}(t)+F_0^1x^{(N_f)}(t)\}\ud t\\
&\qq\qq+\{C_0X_0(t)+D_0u_0(t)+E_0^2X^{(N_l)}(t)+F_0^2x^{(N_f)}(t)\}\ud W_0(t), \\
& X_0(0)=\xi_0,
\end{aligned}\right.\end{equation}
\begin{equation}\cA_i^l:\ i=1,2,\ldots,N_l,\left\{\begin{aligned}\label{C6e2}
&\ud X_i(t)=\{AX_i(t)+Bu_i(t)+E_1X^{(N_l)}(t)\}\ud t\\
&\qq\qq+\{CX_i(t)+Du_i(t)+E_2X^{(N_l)}(t)\}\ud W_i(t)\\
& X_i(0)=\xi_i
\end{aligned}\right.\end{equation}
and
\begin{equation}\cA_j^f:\ j=1,2,\ldots,N_f,\left\{\begin{aligned}\label{C6e3}
&\ud x_j(t)=\{\tilde{A}x_j(t)+\tilde{B}v_j(t)+F_1x^{(N_f)}(t)\}\ud t\\
&\qq\qq+\{\tilde{C}x_j(t)+\tilde{D}v_j(t)+F_2x^{(N_f)}(t)\}\ud\widetilde{W}_j(t)\\
& x_j(0)=\zeta_j,
\end{aligned}\right.\end{equation}
where $X^{(N_l)}(t)=\frac{1}{N_l}\sum_{i=1}^{N_l}X_i(t)$ and $x^{(N_f)}(t)=\frac{1}{N_f}\sum_{j=1}^{N_f}x_j(t)$ are called \emph{state-average} or \emph{mean field term}. $A_0$, $A$, $\tilde{A}$, $B_0$, $B$, $\tilde{B}$, $C_0$, $C$, $\tilde{C}$, $D_0$, $D$, $\tilde{D}$, $E_0^1$, $E_0^2$, $E_1$, $E_2$, $F_0^1$, $F_0^2$, $F_1$, $F_2$ are deterministic constant matrices with proper dimensions. In the above, $X_0(\cdot)$, $X_i(\cdot)$, $x_j(\cdot)$ are called the \emph{state process} taking values in $\hR^n$ with \emph{initial values} $\xi_0$, $\xi_i$, $\zeta_j$ which are random variables. $u_0(\cdot)$, $u_i(\cdot)$, $v_j(\cdot)$ are called \emph{admissible controls} taken by $(1+N_l+N_f)$ players in the game and taking values in $\hR^{m_1}$, $\hR^{m_2}$, $\hR^{m_3}$, respectively. Under some mild conditions on the coefficients, for any initial values $\xi_0$, $\xi_i$, $\zeta_j$, \eqref{C6e1}, \eqref{C6e2} and \eqref{C6e3} admits a unique strong solution. The performance can be measured by the following \emph{cost functionals}: for $\cA_0,$
\begin{equation}\begin{aligned}\label{C6e4}
\cJ_0(u_0(\cdot),\textbf{u}(\cdot),\textbf{v}(\cdot))=&\frac{1}{2}\mathbb{E}\Big\{\int_0^T \Big(\Big\|X_0(t)-\big(\lambda_0X^{(N_l)}(t)+(1-\lambda_0)x^{(N_f)}(t)\big)\Big\|_{Q_0}^2\\
&\qq\qq+\|u_0(t)\|_{R_0}^2\Big)\ud t+\|X_0(T)\|_{H_0}^2\Big\};
\end{aligned}\end{equation}for $\cA_i^l,\ 1\leq i \leq N_l$;\begin{equation}\begin{aligned}\label{C6e5}
\cJ_i^l(u_0(\cdot),u_i(\cdot),\textbf{u}_{-i}(\cdot))=&\frac{1}{2}\mathbb{E}\Big\{\int_0^T \Big(\Big\|X_i(t)-\big(\lambda X^{(N_l)}(t)+(1-\lambda)X_0(t)\big)\Big\|_{Q}^2\\
&\qq\qq+\|u_i(t)\|_{R}^2\Big)\ud t+\|X_i(T)\|_{H}^2\Big\},
\end{aligned}\end{equation}and for $\cA_j^f,\ 1\leq j \leq N_f$,
\begin{equation}\begin{aligned}\label{C6e6}
\cJ_j^f(u_0(\cdot),\textbf{u}(\cdot),v_j(\cdot),\textbf{v}_{-j}(\cdot))=&\frac{1}{2}\mathbb{E}\Big\{\int_0^T \Big(\Big\|x_j(t)-\big(\tilde{\lambda}_1 X_0(t)+\tilde{\lambda}_2 X^{(N_l)}(t)+\tilde{\lambda}_3 x^{(N_f)}(t)\big)\Big\|_{\tilde{Q}}^2\\
&\qq\qq+\|v_j(t)\|_{\tilde{R}}^2\Big)\ud t+\|x_j(T)\|_{\tilde{H}}^2\Big\}.
\end{aligned}\end{equation}where for given vector $z$, $\|z\|^2_{M}=\langle Mz, z\rangle$ for $M$ is any matrix or matrix-valued function of suitable dimensions, and where $Q_0,Q,\tilde{Q},R_0,R,\tilde{R},H_0,H,\tilde{H}$ are deterministic symmetric matrix of suitable dimensions. As we can see, all agents are coupled not only in their state process but also in their cost functionals with convex combinations of state-average.

Roughly speaking, agent $\cA_j^f$ will give his/her best respond according to the strategies from major leader $\cA_0$ and minor leaders $\cA_i^l$ to minimize his/her own cost functional $\cJ_j^f(u_0(\cdot),\textbf{u}(\cdot),v_j(\cdot),\textbf{v}_{-j}(\cdot))$. And agent $\cA_0$ will also give his/her best respond according to the strategies minor leaders $\cA_i^l$ and the best respond of minor followers to minimize his/her own cost functional $\cJ_0(u_0(\cdot),\textbf{u}(\cdot), \textbf{v}(\cdot))$. Knowing the best respond of major leader and minor followers, agent $\cA_i^l$ wants to minimize his/her own cost functional $\cJ_i^l(u_0(\cdot),u_i(\cdot),\textbf{u}_{-i}(\cdot))$ by choosing an optimal control $u_i(\cdot)$. However, due to the state-average coupling, our problem is essentially a high-dimensional Stackelberg-Nash differential game. Moreover, $\mathcal{A}_0$ is the the dominate or major leader because it effects the cost functionals of all minor leaders.

We call the above problem formulated as \emph{Mixed Stakelberg-Nash Major-minor (SN-MM) differential game}. The following comments on our formulation further verify such terminology.

(\emph{Single leader-follower game}) In case $N_{l}=0, N_{f}=1,$ thus there has no minor leaders and only single followers, with one major leader, then our problem reduces to the classical single-leader and single-follower game. The Stackelberg game has been proposed in 1934 by H. von Stackelberg \cite{S34}, when he defined a concept of a hierarchical solution for markets where some firms have power of dominating over others. This solution concept is now known as the Stackelberg equilibrium. Early study for stochastic Stackelberg differential games (SSDG) can be seen in Basar (1979) \cite{B79}. A pioneer work was done by Yong (2002) \cite{Y02-LQ}, where a LQ leader-follower stochastic differential game (SDG) was introduced and studied. The coefficients of the system and the cost functionals are random, the controls enter the diffusion term of the state equation, and the weight matrices for the controls in the cost functionals are not necessarily positive definite. To give a state feedback representation of the OL Stackelberg equilibrium, the related Riccati equations are derived and sufficient conditions for the existence of their solution with deterministic coefficients are discussed. Here after, Bensoussan, Chen, and Sethi (2015) \cite{BCS15} obtained the global maximum principles for both open-loop (OL) and closed-loop (CL) SSDG whereas the diffusion term does not contain the controls. The solvability of related Riccati equations is discussed, in order to obtain the state feedback Stackelberg equilibrium.

(\emph{Multiple leaders-followers game}) In case $N_{l}, N_f$ are of medium or small size, then our problem is reduced to the Stackelberg game with multiple leaders and multiple followers. It is a natural extension of the single leader-follower game and the relevant works include \cite{BCY15,BCY16,NCM12}, etc.

(\emph{Mean-field-game with symmetric agents}) In case $N_{l}=0,$ and no $\mathcal{A}_0$ involved, then our problem becomes the standard dynamic game with a very large number of minor (symmetric) agents in which each single agent interacts with the mass-effect of other agents only through coupling in states/dynamics. For large population stochastic dynamics, one effective method is to search its decentralized strategies by the mean-field-game (MFG) theory. We recall that there are much work to study mean field game (MFG). Since the recent independent works by Huang, Caines, and Malham\'{e} \cite{HCM03,HMC06} and Lasry and Lions \cite{LLJ06I,LLJ06II,LL07}, mean field game (MFG) theory and its applications have enjoyed rapid growth. MFG provides a simpler alternative framework for tackling the interactive game for a large number of homogeneous agents. By allowing agents to interact through a common medium, known as the mean field term, formulation of the dynamic game under the MFG framework consists of only a few equations. Further developments on the theory of MFG can be found in the works of Andersson and Djehiche \cite{AD11}, Bardi \cite{B13}, Bensoussan, Frehse, and Yam \cite{BFY13}, Buckdahn et al. \cite{BDLP07}, Cardaliaguet \cite{C12}, Carmona and Delarue \cite{CD12}, Garnier, Papanicolaou, and Yang \cite{GPY13}, Gu\'{e}ant, Lasry, and Lions \cite{GLL11}, Meyer-Brandis, \O ksendal, and Zhou \cite{MOZ}, and the references therein.

(\emph{Major-minor game}) In case $N_f=0,$ then there has no followers and only major and minor leaders involved, and our problem becomes the major-minor (MM) mean-field game (MFG). The MM-MFG is introduced in \cite{H10}, and has been well investigated by \cite{NC12} common major-minor mean field LQG game (refer to \cite{NC12}). Our model generalizes \cite{BCY15,BCY16,NC12,NCM12} because it includes not only leader-follower structure but also major-minor structure.

(\emph{Convex combination}) Refer to Nourian, Caines, Malham\'{e} and Huang (2012) \cite{NCM12}, here we consider a kind of general case of cost functional with likelihood ratio (i.e. convex combination). On other words, for an example, the cost functional of the major leaders is based on a trade-off between keeping cohesion of the flock of minor leaders and keeping cohesion of the flock of the followers (see \eqref{C6e4}). By the way, we may be interested in special case like $\tilde{\lambda}_3\neq0$ which means the cost functional of followers are directly influenced by the major-leader or $\tilde{\lambda}_3=0$ which means the cost functional of followers are not directly influenced by the major-leader. We will discuss difference between the special case and the general case in following sections.

\begin{remark}Application of our problem formulation may be found in power markets involving large size of consumers and large utilities together with the following producer; inventory management without stocking capacities. (refer to \cite{DX09})
The state processes are characterized by three kinds of group. One we called major leader agent can be regarded as the government or supervisory in the economic issues. And the ones we called minor leader agents can be regarded as the corresponding companies or firms. The rest ones we called minor follower agents can be regarded as the related suppliers of raw material or manufacturers of primary commodity, etc. We can see that the state processes of three types of group have no influence on each other but the cost functionals do have direct influence on each other.
\end{remark}

Our present work considers the combination problems of leader-follower and major-minor systems, where the large scale population is also under consideration. In the entire system, the major and a part of minor agents are together regarded as the leaders, which are called major-leader and minor-leaders respectively and the rest are called minor followers (followers). Obviously, the more complex structure will bring some technical problem. Besides, there are lots of interesting questions remain to be solved. For an example, we can consider the state processes include the mean-field term which may coincide the real world much better or focus on the more realistic cost functional, etc.

Let us now explain the argument structure of our problem. In principle, the above problem can be studied as a MM-MFG coupled with a leader-follower game. Accordingly, original problem can be analyzed through the following structures:

\textbf{Step 1}: Fix the mass effect limit of minor leaders $\bar{m}_{X}$ and major leader $(x_0, u_0).$ With frozen $(x_0, u_0, \bar{m}_{X}),$ introduce and solve the auxiliary problem to get the best decentralized response function of minor followers, denoted by $\bar{m}_{x}=\bar{m}_{x}(x_0, u_0, \bar{m}_{X}).$  

\textbf{Step 2}: Given the response functional $\bar{m}_{x},$ and frozen $\bar{m}_{X},$ solve the decentralized SOC problem of $\mathcal{A}_0,$ and denote the optimal solution pair as $(\bar{X}_0,\bar{u}_0)$.  

\textbf{Step 3}: Given $\bar{m}_{x},$ solve the optimal control for the minor leaders. Influenced by the optimal control of major leader $\bar{u}_0(\cdot)$ (supposing it exists, which depends on the choices $\textbf{u}(\cdot)$ of minor leaders and the initial state $\xi_0$, $\xi_i$, $\zeta_j$, in general), agents $\cA_i^l$, $1\leq i\leq N_l$, (the minor leaders) would like to choose some $\bar{u}_i(\cdot)$ to minimize $\cJ_i^l(\bar{u}_0(\cdot),u_i(\cdot),\textbf{u}_{-i}(\cdot))$.

\textbf{Step 4} CC condition to specify $\bar{m}_{X}$ and all decentralized strategies can be designed. Approximate Stackelberg-Counot-Nash equilibrium can be verified.

The main contribution of this paper can be summarized as follows: \begin{itemize} \item The decentralized strategy profile is investigated in both (semi-)closed-loop and open-loop sense. \item Existence and uniqueness of the CC condition system is investigated in the global solvability case. \item the CC condition system is represented via a full-coupled mean-field type FBSDE in open-loop case, and FBSDE and non-standard Riccati equation in closed-loop sense. \item The approximate Nash equilibrium Stakleberg game is verified under more general condition (more than standard assumption with positive-definitiveness on coefficient matrix).  \end{itemize}

The rest of this paper is organized as follows. In section 2, we give the formal problem formulation and some preliminaries. In section 3, we discuss the open-loop strategy of Stackelberg mixed major-minor games. In section 4 We get the consistency condition system equations based on the open-loop strategy, which is a fully coupled FBSDE. Besides, we get the criteria to judge the well-posedness of such a FBSDE. At last, we verify the OL strategy we got is $\e$-Nash equilibrium OL strategy of the original problem.

\section{Preliminary and formulation}The following notations will be used throughout this paper. Let $\hR^{n}$ denotes the $n-$dimensional Euclidean space, $\hR^{n\times m}$ be the set of all $(n\times m)$ matrices, and let $\cS^n$ be the set of all $(n\times n)$ symmetric matrices. We denote the transpose by subscript $^\top$, the inner product by $\langle\cd,\cd\rangle$ and the norm by $|\cd|$. For $t\in[0,T]$ and Euclidean space $\hH$, we introduce the following function spaces:\begin{equation*}\begin{aligned}
&L^p(t,T;\hH)=\Big\{\psi:[t,T]\rightarrow\hH\ \Big|\ \int_{t}^{T}|\psi(s)|^p\ud s<\infty\Big\},\qq 1\leq p<\infty,\\
&L^\infty(t,T;\hH)=\Big\{\psi:[t,T]\rightarrow\hH\ \Big|\ \mbox{esssup}_{s\in[t,T]}|\psi(s)|<\infty\Big\},\\
&C([t,T];\hH)=\Big\{\psi:[t,T]\rightarrow\hH\ \Big|\ \psi(\cdot)\ \mathrm{is}\ \mathrm{continuous}\Big\}.\\
\end{aligned}\end{equation*}and the spaces of process or random variables on given filtrated probability space: \begin{equation*}\begin{aligned}
&L^2_{\cF_t}(\Omega;\hH)=\Big\{\xi:\Omega\rightarrow\hH\ \Big|\ \xi\ \mathrm{is}\ \cF_t\mathrm{-measurable},\ \hE[|\xi|^2]<\infty\Big\},\\
&L^2_{\cF}(t,T;\hH)=\Big\{\psi:[t,T]\times\Omega\rightarrow\hH\ \Big|\ \psi(\cdot)\ \mathrm{is}\ \cF_t\mathrm{-progressively}\ \mathrm{measurable},\ \hE\Big[\int_{t}^{T}|\psi(s)|^2\ud s\Big]<\infty\Big\},\\
&L^2_{\cF}(\Omega;C([t,T];\hH))=\Big\{\psi:[t,T]\times\Omega\rightarrow\hH\ \Big|\ \psi(\cdot)\ \mathrm{is}\ \cF_t\mathrm{-adapted},\ \mathrm{continuous},\ \hE\Big[\sup_{t\leq s\leq T}|\psi(s)|^2\Big]<\infty\Big\},\\
&L^2_{\cF}(\Omega;L^1(t,T;\hH))=\Big\{\psi:[t,T]\times\Omega\rightarrow\hH\ \Big|\ \psi(\cdot)\ \mathrm{is}\ \cF_t\mathrm{-progressively}\ \mathrm{measurable},\ \hE\Big[\int_{t}^{T}|\psi(s)|\ud s\Big]^2<\infty\Big\}.\\
\end{aligned}\end{equation*}We set the following information structures, which are important to introduce our admissible strategies: $\{\cF_t\}_{0\leq t\leq T}$ is the natural filtration generated by all BM components $\{W_0(\cdot),W_i(\cdot),\widetilde{W}_j(\cdot)\}$ augmented by all the $\mathbb{P}$-null sets in $\cF$, $1\leq i\leq N_l,\ 1\leq j\leq N_f$, it can be viewed as the full information of all states and noises; $\{\cF_t^0\}_{0 \leq t \leq T}$ is the natural filtration generated by $\{W_0(\cdot),X_0(\cdot)\}$ augmented by all the $\mathbb{P}$-null sets in $\cF$. It is the space on which the limiting state-average should be adapted; $\{\cF_t^i\}_{0 \leq t \leq T}$ is the natural filtration generated by $\{W_i(\cdot), X_i(\cdot)\}$ augmented by all the $\mathbb{P}$-null sets in $\cF$, $1\leq i\leq N_l$; $\{\cG_t^j\}_{0 \leq t \leq T}$ is the natural filtration generated by $\{\widetilde{W}_j(\cdot),Y_j(\cdot) \}$ augmented by all the $\mathbb{P}$-null sets in $\cF$, $1\leq j\leq N_f$.

Given information structures, we can set the following Hilbert spaces for \emph{centralized} and \emph{decentralized} strategies for individual agents in open-loop sense:
\begin{equation}\begin{aligned}\label{control}
\cU_0^c[0,T]\triangleq&L^2_{\cF}(0,T;\hR^{m_1}),\\
\cU_i^c[0,T]\triangleq&L^2_{\cF}(0,T;\hR^{m_2}),\q i=1,2,\ldots,N_l,\\
\cV_j^c[0,T]\triangleq&L^2_{\cF}(0,T;\hR^{m_3}),\q j=1,2,\ldots,N_f,\\
\end{aligned}\end{equation}and \emph{decentralized} open-loop strategies: \begin{equation}\begin{aligned}\label{control}
\cU_0^d[0,T]\triangleq&L^2_{\cF^0}(0,T;\hR^{m_1}),\\
\cU_i^d[0,T]\triangleq&L^2_{\cF^i}(0,T;\hR^{m_2}),\q i=1,2,\ldots,N_l,\\
\cV_j^d[0,T]\triangleq&L^2_{\cG^j}(0,T;\hR^{m_3}),\q j=1,2,\ldots,N_f.\\
\end{aligned}\end{equation}Let $(u_0,\textbf{u},\textbf{v})=(u_0,u_1, \ldots, u_{N_l},v_1,\ldots,v_{N_f})$ denote the strategy set of all $(1+N_l+N_f)$ agents; $\textbf{u}=(u_1, \ldots, u_{N_l})$ the set of control strategies of all $N_l$ major-leader agents; $\textbf{v}=(v_1, \ldots, v_{N_f})$ the set of strategy profile of all $N_f$ follower agents; $\textbf{u}_{-i}=(u_1, \ldots, u_{i-1},$ $u_{i+1}, \ldots,$ $u_{N_l})$ the control strategy set of major-leader agents except $\cA_i^l$; $\textbf{v}_{-j}=(v_1, \ldots, v_{j-1},$ $v_{j+1}, \ldots,$ $v_{N_f})$ the control strategy set of follower agents except the $j^{th}$ follower agent $\cA_j^f$.

Sometimes, when defining the Stakelberg-Nash-Counor strategy, it is helpful to set the following product space for strategy set. We denote
\begin{equation*}\left\{\begin{aligned}
&\cU^c[0,T]=\cU_1^c[0,T]\times\cdots\times\cU_{N_l}^c[0,T],&&\cV^c[0,T]=\cV_1^c [0,T]\times\cdots\times\cV_{N_f}^c[0,T],\\
&\cU^d[0,T]=\cU_1^d[0,T]\times\cdots\times\cU_{N_l}^d[0,T],&&\cV^d[0,T]=\cV_1^d [0,T]\times\cdots\times\cV_{N_f}^d[0,T],\\
\end{aligned}\right.\end{equation*}
\begin{equation*}\left\{\begin{aligned}
&\cU_{-i}^c[0,T]=\cU_1^c[0,T]\times\cdots\times\cU_{i-1}^c[0,T]\times\cU_{i+1}^c[0,T]\times\cdots\times\cU_{N_l}^c[0,T],\\
&\cV_{-j}^c[0,T]=\cV_1^c[0,T]\times\cdots\times\cV_{j-1}^c[0,T]\times\cV_{j+1}^c[0,T]\times\cdots\times\cV_{N_f}^c[0,T],\\
&\cU_{-i}^d[0,T]=\cU_1^d[0,T]\times\cdots\times\cU_{i-1}^d[0,T]\times\cU_{i+1}^d[0,T]\times\cdots\times\cU_{N_l}^d[0,T],\\
&\cV_{-j}^d[0,T]=\cV_1^d[0,T]\times\cdots\times\cV_{j-1}^d[0,T]\times\cV_{j+1}^d[0,T]\times\cdots\times\cV_{N_f}^d[0,T].
\end{aligned}\right.\end{equation*}
Then any $(u_0,\textbf{u},\textbf{v})\in\cU_0^c[0,T]\times\cU^c[0,T]\times\cV^c[0,T]$ is called an \emph{admissible centralized strategy}, and any $(u_0,\textbf{u},\textbf{v})\in\cU_0^d[0,T]\times\cU^d[0,T]\times\cV^d[0,T]$ is called an \emph{admissible decentralized strategy}.

Let us introduce the following hypothesis on coefficients of state dynamics and cost functionals:
\begin{description}
\item[(H1)] The coefficients of the state equations and cost functionals satisfy the following:
\begin{equation*}\left\{\begin{aligned}
&A_0,A,\tilde{A},C_0,C,\tilde{C},E_0^1,E_0^2,E_1,E_2,F_0^1,F_0^2,F_1,F_2\in\hR^{n\times n};\\
&B_0,D_0\in\hR^{n\times m_1};\q B,D\in\hR^{n\times m_2};\q\tilde{B},\tilde{D}\in\hR^{n\times m_3}.\\
&Q_0,Q,\tilde{Q},H_0,H,\tilde{H}\in\cS^n;\\
&R_0\in\cS^{m_1};\q R\in\cS^{m_2};\q \tilde{R}\in\cS^{m_3}.\\
\end{aligned}\right.\end{equation*}
\item[(H2)] The initial states $\xi_0,\xi_i,\zeta_j\in L^2_{\cF_0}(\Omega;\hR^n)$ are independent; $\hE[\xi_i]=\hE[\zeta_j]=0$, for each $i=1,\ldots,N_l$, $j=1,2,\ldots,N_f$; and there exists $c_0<\infty$ independent of $N_l$ and $N_f$ such that $\sup_{i\geq0}\hE[|\xi_i|^2]\leq c_0$ and $\sup_{j\geq1}\hE[|\zeta_j|^2]\leq c_0$.
\end{description}We point out that no positive-definiteness/non-negativeness conditions on the weighting matrix/matrix-valued functions imposed in \textbf{(H1)}.
Moreover, the coefficients of the convex combination $0\leq\l_0$, $\l$, $\tilde{\l}_1$, $\tilde{\l}_2$, $\tilde{\l}_3\leq 1$. Under \textbf{(H1)}, for any $(x_0,u_0(\cdot))\in\hR^n\times\cU_0^c[0,T]$ (resp., $\hR^n\times\cU_0^d[0,T]$), $(x,u_i(\cdot))\in\hR^n\times\cU_i^c[0,T]$ (resp., $\hR^n\times\cU_i^d[0,T]$), $(y,v_j(\cdot))\in\hR^n\times\cV_j^c[0,T]$ (resp., $\hR^n\times\cV_j^d[0,T]$), \eqref{C6e1}, \eqref{C6e2}, \eqref{C6e3} admits a unique (strong) solution. And the cost functionals \eqref{C6e4}, \eqref{C6e5}, \eqref{C6e6} are also well-defined.

For simplicity, in \textbf{(H2)} it is assumed that all minor leaders and followers have zero initial mean. It is possible to generalize our analysis to deal with different initial means as long as $\{\hE[\xi_i],i\geq1\}$ and $\{\hE[\zeta_j],j\geq1\}$ has a limiting empirical distribution. Now, we can introduce the Stakelberg-Counot-Nash equilibrium as follows.
\begin{definition}
A $(1+N_l+N_f)$-tuple $(\bar{u}_0[\cdot],\bar{\textbf{u}}(\cdot),\bar{\textbf{v}}[\cdot])$, is called an \emph{open-loop Stakelberg-Counot-Nash equilibrium} for the initial states $\xi_0,\xi_i,\zeta_j\in L^2_{\cF_0}(\Omega;\hR^n)$ if:
\begin{equation}\begin{aligned}\label{col}
      &\cJ_j^f(u_0(\cdot),\textbf{u}(\cdot),\bar{v}_j[u_0(\cdot),\textbf{u}(\cdot),\xi_0,\xi_i,\zeta_j](\cdot), \textbf{v}_{-j}(\cdot))\\ &\q=\min_{v_j(\cdot)\in\cV_j^c[0,T]}\cJ_j^f(u_0(\cdot),\textbf{u}(\cdot),v_j(\cdot),\textbf{v}_{-j}(\cdot)),\qq\forall u_0(\cdot)\in\cU_0^c[0,T],\q \textbf{u}(\cdot)\in\cU^c[0,T],\\
      &\cJ_0(\bar{u}_0[\textbf{u}(\cdot),\xi_0,\xi_i,\zeta_j](\cdot),\textbf{u}(\cdot),\bar{\textbf{v}}[\bar{u}_0[\textbf{u}(\cdot),\xi_0,\xi_i,\zeta_j](\cdot), \textbf{u}(\cdot),\xi_0,\xi_i,\zeta_j](\cdot))\\
      &\q=\min_{u_0(\cdot)\in\cU_0^c[0,T]}\cJ_0(u_0(\cdot),\textbf{u}(\cdot),\bar{\textbf{v}}[u_0(\cdot),\textbf{u}(\cdot),\xi_0,\xi_i,\zeta_j](\cdot)),\qq\forall \textbf{u}(\cdot)\in\cU^c[0,T],\\
      &\cJ_i^l(\bar{u}_0[\bar{u}_i(\cdot),\textbf{u}_{-i}(\cdot),\xi_0,\xi_i,\zeta_j](\cdot),\bar{u}_i(\cdot),\textbf{u}_{-i}(\cdot))\\
      &\q=\min_{u_i(\cdot)\in\cU_i^c[0,T]}\cJ_i^l(\bar{u}_0[\textbf{u}(\cdot),\xi_0,\xi_i,\zeta_j](\cdot),u_i(\cdot),\textbf{u}_{-i}(\cdot)),\\
\end{aligned}\end{equation}where $\bar{v}_j:\cU_0^c[0,T]\times\cU^c[0,T]\times L^2_{\cF_0}(\Omega;\hR^n)\times L^2_{\cF_0}(\Omega;\hR^n)\times L^2_{\cF_0}(\Omega;\hR^n)\rightarrow\cV_j^c [0,T]$, and $\bar{u}_0:\cU^c[0,T]\times L^2_{\cF_0}(\Omega;\hR^n)\times L^2_{\cF_0}(\Omega;\hR^n)\times L^2_{\cF_0}(\Omega;\hR^n)\rightarrow\cU_0^c[0,T]$.
\end{definition}If there is no confusion, we use the same notation to denote the optimal respond and the optimal strategy of major leader and minor followers. The above definition of OL strategy is defined in centralized sense. In
particular, game theory has been formulated to capture such individual
interest seeking behavior of the agents in many social,
economic and man-made systems.

For fixed $N=N_{l}+N_{f}$, if each agents can access the full information (states) of other agents, we may view the problem as a standard dynamic LQG leader-follower games and use the full information DPP to derive the Stakelberg-Counot-Nash equilibrium. We now introduce the following definition.
However, scale dynamic model, this approach results in an analytic complexity
which is in general prohibitively high, and correspondingly
leads to few substantive dynamic optimization results. The optimization of large-scale linear control systems
wherein i) many agents are coupled with each other via their
individual dynamics, and ii) the costs are in an individual to
the mass form was presented in  where the theory of
mean field (MF) control (previously termed Nash Certainty
Equivalence) was introduced. It is to be noted that the dynamic
large-scale cost coupled optimization structure of is motivated
by a variety of scenarios, for instance, those analyzed in MFG analysis.

\subsection{Mixed Stakelberg-Counot-Nash equilibrium analysis}

To deal with mixed leader-follower MM dynamic game using MFG theory, one should start with followers. And to deal with a major-minor MFG, one should start with major players. Although the relationships get complicated under our situation, we can still deal with it step by step. That is, firstly, we can solve the optimization problems of followers. The left is a classic major-minor problem and solved in the way of \cite{H10}. The interesting things occur when the major-leader imposes some direct impacts to the followers (i.e., $\tilde{\lambda}_3\neq0$), which will lead to that the state process of major leader will be relied on a kind of forward-backward stochastic differential equation (FBSDE). Generally speaking, it is hard to get the centralized strategy of such mixed Stackelberg MM-MFG. So, let us briefly look at the procedure of finding a decentralized open-loop $\e$-Nash equilibrium strategy of the original problem. And the procedure of finding a decentralized closed-loop $\e$-Nash equilibrium strategy is very similar which will be formulated in next subsection.\\

\textbf{Step 1: MFG analysis of followers}: Let us introduce the auxiliary limiting LQG differential game problems. Firstly, by the Stackelberg game, for given strategy of major leader and minor leaders, followers have to minimize the following cost functionals:
\begin{equation*}\begin{aligned}
\mathcal{J}_j^f(\xi_0,\zeta_j,\bar{m}_X(\cdot);u_0(\cdot),\textbf{u}(\cdot),v_j(\cdot))=&\frac{1}{2}\mathbb{E}\Big\{\int_0^T \Big(\Big\|x_j(t)-\big(\tilde{\lambda}_1 X_0(t)+\tilde{\lambda}_2\bar{m}_X(t)+\tilde{\lambda}_3 x^{(N_f)}(t)\big)\Big\|_{\tilde{Q}}^2\\
&\qq\qq+\|v_j(t)\|_{\tilde{R}}^2\Big)\ud t+\|x_j(T)\|_{\tilde{H}}^2\Big\},
\end{aligned}\end{equation*}
where $\bar{m}_X(\cdot)=\lim_{N_l\rightarrow+\infty}X^{(N_l)}(\cdot)$. Furthermore, as $N_f\rightarrow +\infty,$ we suppose $x^{(N_f)}(\cdot)$ can be approximated by $\mathcal F^0_t$-measurable function $\bar{m}_x(\cdot)$. Then the state process of the follower becomes
\begin{equation}\left\{\begin{aligned}\label{G26}
&\ud \bar{x}_j(t)=\{\tilde{A}\bar{x}_j(t)+\tilde{B}v_j(t)+F_1\bar{m}_x(t)\}\ud t+\{\tilde{C}\bar{x}_j(t)+\tilde{D}v_j(t)+F_2\bar{m}_x(t)\}\ud\widetilde{W}_j(t)\\
&\bar{x}_j(0)=\zeta_j,
\end{aligned}\right.\end{equation}
with the following auxiliary cost functionals
\begin{equation}\begin{aligned}\label{C6e9}
J_j^f(\xi_0,\zeta_j,\bar{m}_X(\cdot),\bar{m}_x(\cdot);u_0(\cdot),v_j(\cdot))=&\frac{1}{2}\mathbb{E}\Big\{\int_0^T \Big(\Big\|x_j(t)-\big(\tilde{\lambda}_1 X_0(t)+\tilde{\lambda}_2\bar{m}_X(t)+\tilde{\lambda}_3 \bar{m}_x(t)\big)\Big\|_{\tilde{Q}}^2\\
&\qq\qq+\|v_j(t)\|_{\tilde{R}}^2\Big)\ud t+\|x_j(T)\|_{\tilde{H}}^2\Big\},
\end{aligned}\end{equation}
for $\cA_j^f,\ 1\leq j \leq N_f$. To distinguish from the original problem, we use the new state variables $\bar{x}_j$ and we will denote $\bar{X}_0$ and $\bar{X}_i$ the new state variables later. But we still use the same set of variables $u_0,u_i,v_j,W_0,W_i,\wt{W}_j$ in this auxiliary limiting problem, and such a reuse of notation should cause no confusion. Then, introduce the following auxiliary Nash game for followers as follows.

\textbf{Problem (OL1).} For given $\xi_0,\zeta_j\in L^2_{\cF_0}(\Omega;\hR^n)$, $\cF^0_t$-measurable functions $\bar{m}_X(\cdot),\bar{m}_x(\cdot)$, and the control $u_0(\cd)$ of major leader $\cA_0$, find an open-loop strategy $\bar{v}_j(\cdot)=\bar{v}_j[u_0(\cdot),\bar{m}_X(\cdot),\bar{m}_x(\cdot),\xi_0,\zeta_j]\in\cV_j^d[0,T]$, $1\leq j\leq N_f$. On other words, Find the Nash equilibrium response functional $\bar{v}_j[\cd]:\cU_0^d[0,T]\times L^2_{\cF^0}(0,T;\hR^n)\times L^2_{\cF^0}(0,T;\hR^n)\times L^2_{\cF_0}(\Omega;\hR^n)\times L^2_{\cF_0}(\Omega;\hR^n)\rightarrow\cV_j^d [0,T]$ of the following Nash differential games among followers:
$$
J_j^f(\xi_0,\zeta_j,\bar{m}_X(\cdot),\bar{m}_x(\cdot);u_0(\cdot),\bar{v}_j[u_0(\cdot),\bar{m}_X(\cdot),\bar{m}_x(\cdot),\xi_0,\zeta_j])= \inf_{v_j(\cdot)\in\cV_j^d[0,T]}J_j^f(\xi_0,\zeta_j,\bar{m}_X(\cdot),\bar{m}_x(\cdot);u_0(\cdot),v_j(\cdot)).
$$The analysis of \textbf{Problem (OL1)} can be further decomposed into substeps using MFG theory.\\

\textbf{Step 1.1} \textbf{(SOC-F)}:
Fixed $\bar{m}_{x}$, and consider the Nash equilibrium response functional of the above \textbf{Problem (OL1)} for representative minor-follower agent denoted by $\bar{v}_j[\cdot]$. For given $\xi_0,\zeta_j\in L^2_{\cF_0}(\Omega;\hR^n)$, $\cF^0_t$-measurable functions $\bar{m}_X(\cdot),\bar{m}_x(\cdot)$, and the control $u_0(\cd)$ of major leader $\cA_0$, find an open-loop strategy $\bar{v}_j(\cdot)=\bar{v}_j[u_0(\cdot),\bar{m}_X(\cdot),\bar{m}_x(\cdot),\xi_0,\zeta_j]\in\cV_j^d[0,T]$, $1\leq j\leq N_f$. On other words, Find the Nash equilibrium response functional $\bar{v}_j[\cd]:\cU_0^d[0,T]\times L^2_{\cF^0}(0,T;\hR^n)\times L^2_{\cF^0}(0,T;\hR^n)\times L^2_{\cF_0}(\Omega;\hR^n)\times L^2_{\cF_0}(\Omega;\hR^n)\rightarrow\cV_j^d [0,T]$ of the following Nash differential games among followers:
$$
J_j^f(\xi_0,\zeta_j,\bar{m}_X(\cdot),\bar{m}_x(\cdot);u_0(\cdot),\bar{v}_j[u_0(\cdot),\bar{m}_X(\cdot),\bar{m}_x(\cdot),\xi_0,\zeta_j])= \inf_{v_j(\cdot)\in\cV_j^d[0,T]}J_j^f(\xi_0,\zeta_j,\bar{m}_X(\cdot),\bar{m}_x(\cdot);u_0(\cdot),v_j(\cdot)).
$$

\textbf{Step 1.2} \textbf{(CC-F)}: applying state-aggregation method, it is possible to determine the state-average limit $\bar{m}_{x}$ by the following condition: $$\hE\Big[\bar{x}_{j}\big(\bar{v}_j[u_0(\cdot),\bar{m}_X(\cdot),\bar{m}_x(\cdot),\xi_0,\zeta_j]\big)\Big|\cF_t^0\Big]=\bar{m}_x.$$By such step, the Nash equilibrium response functional of follower and $\bar{m}_{x}=\bar{m}_{x}(\xi_0,\zeta_j,u_0,\bar{m}_{X})$ can be specified, given any admissible profile announced by leaders.\\

Given the approximate Nash response of all followers, we can turn to the Nash analysis of all leaders. To this, it is necessary to have some MM-MFG analysis when there are both major-minor agents. \\

\textbf{Step 2: MFG analysis of major-leader}: Anticipating the Nash equilibrium response functional of follower $\bar{m}_{x}=\bar{m}_{x}(\xi_0,\zeta_j,u_0,\bar{m}_{X})$, the leaders should solve some Nash equilibrium with size $N_{l}+1$. Similarly, we can assume that as $N_l\rightarrow +\infty,$ we suppose $X^{(N_l)}(\cdot)$ can be approximated by $\mathcal F^0_t$-measurable function $\bar{m}_X(\cdot)$. Then the state process of the major leader and minor leaders becomes
\begin{equation}\left\{\begin{aligned}\label{G27}
&\ud \bar{X}_0(t)=\{A_0\bar{X}_0(t)+B_0u_0(t)+E_0^1\bar{m}_X(t)+F_0^1\bar{m}_x(t)\}\ud t\\
&\qq\qq+\{C_0\bar{X}_0(t)+D_0u_0(t)+E_0^2\bar{m}_X(t)+F_0^2\bar{m}_x(t)\}\ud W_0(t), \\
& \bar{X}_0(0)=\xi_0,
\end{aligned}\right.\end{equation}
and
\begin{equation}\left\{\begin{aligned}\label{G28}
&\ud \bar{X}_i(t)=\{A\bar{X}_i(t)+Bu_i(t)+E_1\bar{m}_X(t)\}\ud t+\{C\bar{X}_i(t)+Du_i(t)+E_2\bar{m}_X(t)\}\ud W_i(t)\\
& \bar{X}_i(0)=\xi_i.
\end{aligned}\right.\end{equation}
with the following auxiliary cost functionals
\begin{equation}\begin{aligned}\label{C6e7}
J_0(\xi_0,\bar{m}_X(\cdot),\bar{m}_x(\cdot);u_0(\cdot))=&\frac{1}{2}\mathbb{E}\Big\{\int_0^T \Big(\Big\|X_0(t)-\big(\lambda_0\bar{m}_X(t)+(1-\lambda_0)\bar{m}_x(t)\big)\Big\|^2_{Q_0}\\
&\qq\qq+\|u_0(t)\|^2_{R_0}\Big)\ud t+\|X_0(T)\|^2_{H_0}\Big\},
\end{aligned}\end{equation}
for $\cA_0$, and
\begin{equation}\begin{aligned}\label{C6e8}
J_i^l(\xi_0,\xi_i,\bar{m}_X(\cdot);u_i(\cdot))=&\frac{1}{2}\mathbb{E}\Big\{\int_0^T \Big(\Big\|X_i(t)-\big(\lambda \bar{m}_X(t)+(1-\lambda)\bar{X}_0(t)\big)\Big\|_{Q}^2\\
&\qq\qq+\|u_i(t)\|_{R}^2\Big)\ud t+\|X_i(T)\|_{H}^2\Big\},
\end{aligned}\end{equation}
for $\cA_i^l,\ 1\leq i \leq N_l$. This can be formulated into MM-MFG. We can analyze the optimal control of major leader first. We can set the following auxiliary problem for the major-leader.

\textbf{Problem (OL2).} For given $\xi_0\in L^2_{\cF_0}(\Omega;\hR^n)$ and $\cF^0_t$-measurable functions $\bar{m}_X(\cdot)$, find an open-loop strategy $\bar{u}_0(\cdot)\in\cU_0^d[0,T]$ such that
$$
J_0(\xi_0,\bar{m}_X(\cdot),\bar{m}_x(\cdot);\bar{u}_0(\cdot))=\inf_{u_0(\cdot)\in\cU_0^d[0,T]}J_0(\xi_0,\bar{m}_X(\cdot),\bar{m}_x(\cdot);u_0(\cdot)).
$$

\textbf{Step 3: MFG analysis of minor-leader}: Anticipating the Nash equilibrium response functional of follower $\bar{m}_{x}=\bar{m}_{x}(\xi_0,\zeta_j,u_0,\bar{m}_{X})$ and the optimal control $\bar{u}_0$ of the major leader. Under the state process \eqref{G28} with the cost functional \eqref{C6e8}, we consider the following problem for minor-leaders.

\textbf{Problem (OL3).} For given $\xi_0,\xi_i\in L^2_{\cF_0}(\Omega;\hR^n)\times L^2_{\cF_0}(\Omega;\hR^n)$, and $\cF^0_t$-measurable functions $\bar{m}_X(\cdot)$, find an open-loop strategy $\bar{u}_i(\cdot)\in\cU_i^d[0,T]$, $1\leq i\leq N_l$, such that
$$
J_i^l(\xi_0,\xi_i,\bar{m}_X(\cdot);\bar{u}_i(\cdot))=\inf_{u_i(\cdot)\in\cU_i^d[0,T]}J_i^l(\xi_0,\xi_i,\bar{m}_X(\cdot);u_i(\cdot)).
$$

\textbf{Step 4: Consistency condition of (Open-loop) Stakelberg-Cornot-Nash equlibrium}: CC condition to determine the frozen $\bar{m}_{X}$ by
$$\hE\Big[\bar{X}_{i}\big(\bar{u}_i(\bar{m}_X)\big)\Big|\cF_t^0\Big]=\bar{m}_X.$$And turn to get its global solvability.

In order to show the steps more clearly, here we illustrate the steps by the figure as follows.

\begin{center}
  \begin{tikzpicture}[scale=1, auto, >=stealth']
    \small
    \matrix[ampersand replacement=\&, row sep=0.2cm, column sep=0.4cm] {
      \node[block] (1) {$(X_0,u_0)$};
      \&
      \node[branch] (p) {};
      \&
      \node[block] (2) {$(x_j,\bar{v}_j[u_0,\bar{m}_X,\bar{m}_x])$};\\\\\\\\\\
      \node[block] (3) {$\mathbb{E}[\bar{X}_{i}(\bar{u}_i[\bar{X}_0,\bar{m}_X])]=\bar{m}_X$};
      \&
      \&
      \node[block] (4) {$\mathbb{E}[x_{j}(\bar{v}_j[u_0,\bar{m}_X,\bar{m}_x])]=\bar{m}_x$};\\\\\\\\\\
      \node[block] (5) {$(\bar{X}_i,\bar{u}_i[\bar{X}_0,\bar{m}_X])$};
      \&
      \&
      \node[block] (6) {$(\bar{X}_0,\bar{u}_0[\bar{m}_X,\bar{m}_x])$};\\\\\\\\\\
    };

    \draw [connector] (1) -- node {\textbf{Step 1.1}} (2);
    \draw [connector] (2) -- node {\textbf{Step 1.2}} (4);
    \draw [connector] (4) -- node {\textbf{Step 2}} (6);
    \draw [connector] (6) -- node {\textbf{Step 3}} (5);
    \draw [connector] (5) -- node {\textbf{Step 4}} (3);
    \draw [connector] (3) -| (p);
  \end{tikzpicture}
\end{center}

\section{Open-loop strategies}From now on, we will suppress time variable $t$ in the equation unless it is necessary. In this section, we study the Mixed S-MM-game strategy in OL sense.

\subsection{Open-loop strategies for the followers}In this subsection, we solve out \textbf{Problem (OL1)} firstly. The main result of this section can be stated as follows.
\begin{theorem}\label{C6l1}
Under assumptions \emph{\textbf{(H1)}}, \emph{\textbf{(H2)}}, and let $\zeta_j\in L^2_{\cF_0}(\Omega;\hR^n)$, $u_0(\cd)\in \cU_0^d[0,T]$, $X_0(\cd)\in L^2_{\cF^0}(0,T;\hR^n)$, $\bar{m}_X(\cd)$, $\bar{m}_x(\cd)\in L^2(0,T;\hR^n)$ be given. Then $\bar{v}_j(\cd)\in\cV_j^d[0,T]$ is an open-loop decentralized optimal control of \emph{\textbf{Problem (OL1)}} for initial value $\zeta_j$ if and only if the following two conditions hold:
\begin{enumerate}
  \item[\emph{(i)}] For $j=1,2,\ldots,N_f$, the adapted solution $(\bar{x}_j(\cd),\bar{y}_j(\cd),\bar{z}_j(\cd))$ to the FBSDE on $[0,T]$
  \begin{equation}\label{C6e11}
    \left\{
    \begin{aligned}
    &\ud\bar{x}_j=\{\tilde{A}\bar{x}_j+\tilde{B}\bar{v}_j+F_1\bar{m}_x\}\ud t+\{\tilde{C}\bar{x}_j+\tilde{D}\bar{v}_j+F_2\bar{m}_x\}\ud\widetilde{W}_j(t)\\
    &\ud\bar{y}_j=-\Big\{\tilde{A}^\top\bar{y}_j+\tilde{C}^\top\bar{z}_j+ \tilde{Q}\Big(\bar{x}_j-\big(\tilde{\lambda}_1X_0+\tilde{\lambda}_2 \bar{m}_X+\tilde{\lambda}_3 \bar{m}_x\big)\Big)\Big\}\ud t+\bar{z}_j\ud\wt{W}_j(t),\\
    &\bar{x}_j(0)=\zeta_j,\ \bar{y}_j(T)=\tilde{H}\bar{x}_j(T),
    \end{aligned}
    \right.
  \end{equation}
  satisfies the following stationarity condition:
  \begin{equation}\label{C6e10}
    \tilde{B}^\top\bar{y}_j+\tilde{R}\bar{v}_j+\tilde{D}^\top\bar{z}_j=0,\qq \mathrm{a.e.}\ t\in[0,T],\ \mathrm{a.s.}
  \end{equation}
  \item[\emph{(ii)}] For $j=1,2,\ldots,N_f$, the following convexity condition holds:
  \begin{equation}\label{b1}
    \hE\Big\{\int_0^T \Big(\Big\langle\tilde{Q}x_j,x_j\Big\rangle+\Big\langle\tilde{R}v_j,v_j\Big\rangle\Big)\ud t+\Big\langle\tilde{H}x_j(T),x_j(T)\Big\rangle\Big\}\geq 0,\q\forall v_j(\cd)\in\cV_j^d[0,T],
  \end{equation}
  where $x_j(\cd)$ is the solution to the FSDE
  \begin{equation}\left\{\begin{aligned}\label{b2}
    &\ud x_j=\Big\{\tilde{A}x_j+\tilde{B}v_j\Big\}\ud t+\Big\{\tilde{C}Y_j+\tilde{D}v_j\Big\}\ud\wt{W}_j(t),\qq t\in[0,T],\\
    &x_j(0)=0.
  \end{aligned}\right.\end{equation}
  Or, equivalently, the map $v_j(\cd)\mapsto J_j^f(\xi_0,\zeta_j,\bar{m}_X(\cdot),\bar{m}_x(\cdot);u_0(\cdot),v_j(\cdot))$, $\forall j=1,2,\ldots,N_f$ is convex.
\end{enumerate}
\end{theorem}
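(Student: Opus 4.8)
The plan is to recognize \textbf{Problem (OL1)} as a standard stochastic LQ control problem for each representative follower $\cA_j^f$, once we observe that the cost functional $J_j^f$ is, as a function of $v_j(\cdot)$ alone (with $u_0(\cdot)$, $\bar m_X(\cdot)$, $\bar m_x(\cdot)$, $X_0(\cdot)$, $\zeta_j$ all frozen and treated as given inputs), a quadratic functional of $v_j$ driven by a linear state equation \eqref{G26}. The Nash game structure among followers is trivial here in the limiting problem: each follower's state $\bar x_j$ depends only on $v_j$ (not on the other $v_k$), and the coupling term $\bar m_x$ has been frozen, so the Nash equilibrium reduces to $N_f$ decoupled individual optimal-control problems indexed by $j$. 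Thus it suffices to prove the equivalence, for a single generic $j$, between optimality of $\bar v_j$ and the pair of conditions (i)–(ii).

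The key steps I would carry out are the following. First, perturb: for $\bar v_j$ a candidate optimal control with state $\bar x_j$, and for arbitrary $v_j \in \cV_j^d[0,T]$, write $v_j^\theta = \bar v_j + \theta(v_j - \bar v_j)$ and let $x_j^\theta$ be the corresponding state; by linearity of \eqref{G26}, $x_j^\theta = \bar x_j + \theta \eta_j$ where $\eta_j$ solves the homogeneous-input variational equation, which is precisely \eqref{b2} with $v_j$ there replaced by $v_j-\bar v_j$ (note $\eta_j(0)=0$). Second, expand $J_j^f(\cdots;u_0,v_j^\theta)$ as a quadratic polynomial in $\theta$: $J_j^f(v_j^\theta) = J_j^f(\bar v_j) + \theta\, \cI_1 + \tfrac{1}{2}\theta^2\, \cI_2$, where $\cI_1$ is the Gâteaux derivative (a linear functional of $v_j - \bar v_j$) and $\cI_2$ is the second-order term, which equals exactly the left-hand side of \eqref{b1} with $x_j$ the solution of \eqref{b2}. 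Third, compute $\cI_1$ explicitly: $\cI_1 = \hE\int_0^T \big[\langle \tilde Q(\bar x_j - (\tilde\lambda_1 X_0 + \tilde\lambda_2 \bar m_X + \tilde\lambda_3 \bar m_x)), \eta_j\rangle + \langle \tilde R \bar v_j, v_j - \bar v_j\rangle\big]\ud t + \hE\langle \tilde H \bar x_j(T), \eta_j(T)\rangle$. Introduce the adjoint processes $(\bar y_j, \bar z_j)$ as the solution of the BSDE in \eqref{C6e11}, apply Itô's formula to $\langle \bar y_j, \eta_j\rangle$ on $[0,T]$, and use the terminal and driver structure to rewrite $\cI_1 = \hE\int_0^T \langle \tilde B^\top \bar y_j + \tilde D^\top \bar z_j + \tilde R \bar v_j,\, v_j - \bar v_j\rangle \ud t$. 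Fourth, assemble the equivalence: $\bar v_j$ is optimal $\iff$ $J_j^f(v_j^\theta) \geq J_j^f(\bar v_j)$ for all $v_j$ and all $\theta$ (in particular the map is convex) $\iff$ $\cI_1 = 0$ for all $v_j$ and $\cI_2 \geq 0$ for all $v_j$; the first is \eqref{C6e10} (stationarity) by the fundamental lemma of calculus of variations in $L^2_\cF$, and the second is \eqref{b1} (convexity). The direction ``(i)+(ii) $\Rightarrow$ optimal'' follows since then $J_j^f(v_j) - J_j^f(\bar v_j) = \tfrac12 \cI_2(v_j - \bar v_j) \geq 0$; the converse needs the standard argument that a nonzero $\cI_1$ would allow a decrease for small $\theta$ of appropriate sign, and that $\cI_2 < 0$ for some direction would allow a decrease for large $\theta$.

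The main obstacle, and the only place requiring genuine care rather than routine bookkeeping, is handling the \emph{absence of any positive-definiteness assumption} on $\tilde Q, \tilde R, \tilde H$ (emphasized in the remark after \textbf{(H1)}): without $\tilde R > 0$ the quadratic functional need not be coercive, so we cannot shortcut the argument via convexity alone, and we genuinely need conditions (i) and (ii) as separately stated — stationarity captures the vanishing of the first variation and the explicitly stated convexity inequality \eqref{b1} captures nonnegativity of the second. I would also be careful that the admissible class is the \emph{decentralized} space $\cV_j^d[0,T] = L^2_{\cG^j}(0,T;\hR^{m_3})$, so all perturbations $v_j - \bar v_j$ must be $\cG^j_t$-adapted, and the adjoint equation is posed and solved with respect to the same filtration; since the driving inputs $X_0$, $\bar m_X$, $\bar m_x$ are $\cF^0_t$-measurable (hence compatible with $\cG^j_t$ after the appropriate augmentation) and $\zeta_j$ is $\cF_0$-measurable, the BSDE in \eqref{C6e11} is well-posed in $L^2_{\cG^j}$ by standard existence-uniqueness for linear BSDEs under \textbf{(H1)}. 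Finally I would note that the claimed equivalence of ``(ii)'' with convexity of $v_j \mapsto J_j^f$ is immediate from the identity $J_j^f(v_j) = J_j^f(0) + (\text{linear in } v_j) + \tfrac12 \cI_2(v_j)$, since a quadratic functional is convex precisely when its pure second-order part is nonnegative.
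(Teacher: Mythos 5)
Your proposal is correct and follows essentially the same route as the paper's proof: perturb the control linearly, expand the cost as a quadratic in the perturbation parameter, apply It\^o's formula to the pairing $\langle\bar y_j,\cdot\rangle$ with the adjoint BSDE to identify the first variation with the stationarity condition \eqref{C6e10}, and identify the second-order term with the convexity condition \eqref{b1}. The only differences are cosmetic (your parametrization $\bar v_j+\theta(v_j-\bar v_j)$ versus the paper's $\bar v_j+\e v_j$) plus some additional, correct remarks on decoupling, filtrations, and the lack of definiteness assumptions that the paper leaves implicit.
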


\begin{proof}
For given $\zeta_j\in L^2_{\cF_0}(\Omega;\hR^n)$, $u_0(\cd)\in \cU_0^d[0,T]$, $X_0(\cd)\in L^2_{\cF^0}(0,T;\hR^n)$, $\bar{m}_X(\cd)$, $\bar{m}_x(\cd)\in L^2(0,T;\hR^n)$, and $\bar{v}_j(\cd)\in\cV_j^d[0,T]$, let $(\bar{x}_j(\cd)$, $\bar{y}_j(\cd)$, $\bar{z}_j(\cd))$ be adapted solution to FBSDE \eqref{C6e11}. For any $v_j(\cd)\in\cV_j^d[0,T]$ and $\e\in\hR$, let $x_j^\e(\cd)$ be the solution to the following perturbed state equation on $[0,T]$:
\begin{equation*}\left\{\begin{aligned}
  &\ud x_j^\e=\Big\{\tilde{A}x_j^\e+\tilde{B}(\bar{v}_j+\e v_j)+F_1\bar{m}_x\Big\}\ud t+\Big\{\tilde{C}x_j^\e+\tilde{D}(\bar{v}_j+\e v_j)+F_2\bar{m}_x\Big\}\ud\wt{W}_j(t)\\
  &x_j^\e(0)=\zeta_j.
\end{aligned}\right.\end{equation*}
Then denoting $x_j(\cd)$ the solution to the FSDE \eqref{b2}, we have $x_j^\e(\cd)=\bar{x}_j(\cd)+\e x_j(\cd)$ and
\begin{equation*}\begin{aligned}
&J_j^f(\xi_0,\zeta_j,\bar{m}_X(\cdot),\bar{m}_x(\cdot);u_0(\cdot),\bar{v}_j(\cdot)+\e v_j(\cd))-J_j^f(\xi_0,\zeta_j,\bar{m}_X(\cdot),\bar{m}_x(\cdot);u_0(\cdot), \bar{v}_j(\cdot))\\
=&\frac{\e}{2}\mathbb{E}\Big\{\int_0^T \Big(\Big\langle\tilde{Q}\Big(2\bar{x}_j-2\big(\tilde{\lambda}_1 X_0+\tilde{\lambda}_2 \bar{m}_X+\tilde{\lambda}_3\bar{m}_x\big)+\e x_j\Big),x_j\Big\rangle\\
&\qq+\Big\langle\tilde{R}(2\bar{v}_j+\e v_j),v_j\Big\rangle\Big)\ud t+\Big\langle\tilde{H}(2\bar{x}_j(T)+\e x_j(T)),x_j(T)\Big\rangle\Big\}\\
=&\e\mathbb{E}\Big\{\int_0^T \Big(\Big\langle\tilde{Q}\Big(\bar{x}_j-\big(\tilde{\lambda}_1 X_0+\tilde{\lambda}_2 \bar{m}_X+\tilde{\lambda}_3\bar{m}_x\big)\Big),x_j\Big\rangle+\Big\langle\tilde{R}\bar{v}_j,v_j\Big\rangle\Big)\ud t\\
&+\Big\langle\tilde{H} \bar{x}_j(T),x_j(T)\Big\rangle\Big\}+\frac{\e^2}{2}\hE\Big\{\int_0^T \Big(\Big\langle\tilde{Q}x_j,x_j\Big\rangle+\Big\langle\tilde{R}v_j,v_j\Big\rangle\Big)\ud t+\Big\langle\tilde{H}x_j(T),x_j(T)\Big\rangle\Big\}.\\
\end{aligned}\end{equation*}
On the other hand, applying It\^{o}'s formula to $\Big\langle\bar{y}_j,x_j\Big\rangle$, and taking expectation, we obtain
\begin{equation*}\begin{aligned}
  \hE\Big[\Big\langle\tilde{H}\bar{x}_j(T),x_j(T)\Big\rangle\Big]
  =&\hE\Big\{\int_0^T \Big(\Big\langle\tilde{B}^\top\bar{y}_j+\tilde{D}^\top\bar{z}_j,v_j\Big\rangle\\
  &\q-\Big\langle\tilde{Q}\Big(\bar{x}_j-\big(\tilde{\lambda}_1 X_0+\tilde{\lambda}_2 \bar{m}_X+\tilde{\lambda}_3\bar{m}_x\big)\Big),x_j\Big\rangle\Big)\ud t\Big\}.\\
\end{aligned}\end{equation*}
Hence,
\begin{equation*}\begin{aligned}
&J_j^f(\xi_0,\zeta_j,\bar{m}_X(\cdot),\bar{m}_x(\cdot);u_0(\cdot),\bar{v}_j(\cdot)+\e v_j(\cd))-J_j^f(\xi_0,\zeta_j,\bar{m}_X(\cdot),\bar{m}_x(\cdot);u_0(\cdot), \bar{v}_j(\cdot))\\
=&\e\mathbb{E}\Big\{\int_0^T \Big\langle\tilde{B}^\top\bar{y}_j+\tilde{R}\bar{v}_j+\tilde{D}^\top\bar{z}_j,v_j\Big\rangle\ud t\Big\}\\
&+\frac{\e^2}{2}\hE\Big\{\int_0^T \Big(\Big\langle\tilde{Q}x_j,x_j\Big\rangle+\Big\langle\tilde{R}v_j,v_j\Big\rangle\Big)\ud t+\Big\langle\tilde{H}x_j(T),x_j(T)\Big\rangle\Big\}.\\
\end{aligned}\end{equation*}
It follows that
\begin{equation*}\begin{aligned}
&&J_j^f(\xi_0,\zeta_j,\bar{m}_X(\cdot),\bar{m}_x(\cdot);u_0(\cdot),\bar{v}_j(\cdot))\leq J_j^f(\xi_0,\zeta_j,\bar{m}_X(\cdot),\bar{m}_x(\cdot);u_0(\cdot),\bar{v}_j(\cdot)+\e v_j(\cd)),\\
&&\forall v_j(\cd)\in\cV_j^d[0,T],\ \forall\e\in\hR,
\end{aligned}\end{equation*}
if and only if \eqref{C6e10} and \eqref{b1} hold.
\end{proof}

Furthermore, if we assume that $\tilde{R}$ is invertible, then we have
\begin{equation}\label{C6e12}
\bar{v}_j=-\tilde{R}^{-1}(\tilde{B}^\top\bar{y}_j+\tilde{D}^\top\bar{z}_j),
\end{equation}
so the related Hamiltonian system can be represented by
\begin{equation*}\left\{\begin{aligned}
&\ud\bar{x}_j=\{\tilde{A}\bar{x}_j-\tilde{B}\tilde{R}^{-1}(\tilde{B}^\top\bar{y}_j+\tilde{D}^\top\bar{z}_j)+F_1\bar{m}_x\}\ud t\\
&\qq\qq+\{\tilde{C}\bar{x}_j-\tilde{D}\tilde{R}^{-1}(\tilde{B}^\top\bar{y}_j+\tilde{D}^\top\bar{z}_j)+F_2\bar{m}_x\}\ud\widetilde{W}_j(t)\\
&\ud\bar{y}_j=-\Big\{\tilde{A}^\top\bar{y}_j+\tilde{C}^\top\bar{z}_j+ \tilde{Q}\Big(\bar{x}_j-\big(\tilde{\lambda}_1X_0+\tilde{\lambda}_2 \bar{m}_X+\tilde{\lambda}_3 \bar{m}_x\big)\Big)\Big\}\ud t+\bar{z}_j\ud\wt{W}_j(t),\\
&\bar{x}_j(0)=\zeta_j,\ \bar{y}_j(T)=\tilde{H}\bar{x}_j(T),\ j=1,2,\ldots,N_f,
\end{aligned}\right.\end{equation*}
Based on above analysis, it follows that
\begin{equation}\label{G1}
\bar{m}_x(\cdot)=\lim_{N_f\rightarrow+\infty}\frac{1}{N_f}\sum_{j=1}^{N_f}\bar{x}_j(\cdot)=\hE[\bar{x}_j(\cdot)].
\end{equation}
Here, the first equality of \eqref{G1} is due to the consistency condition: the frozen term $\bar{m}_x(\cdot)$ should equal to the average limit of all realized states $\bar{x}_j(\cdot)$; the second equality is due to the law of large numbers. Thus, by replacing $\bar{m}_x$ by $\hE[\bar{x}_j]$, we get the following system
\begin{equation*}\left\{\begin{aligned}
&\ud\bar{x}_j=\{\tilde{A}\bar{x}_j-\tilde{B}\tilde{R}^{-1}(\tilde{B}^\top\bar{y}_j+\tilde{D}^\top\bar{z}_j)+F_1\hE[\bar{x}_j]\}\ud t\\
&\qq\qq+\{\tilde{C}\bar{x}_j-\tilde{D}\tilde{R}^{-1}(\tilde{B}^\top\bar{y}_j+\tilde{D}^\top\bar{z}_j)+F_2\hE[\bar{x}_j]\}\ud\widetilde{W}_j(t)\\
&\ud\bar{y}_j=-\Big\{\tilde{A}^\top\bar{y}_j+\tilde{C}^\top\bar{z}_j+ \tilde{Q}\Big(\bar{x}_j-\big(\tilde{\lambda}_1X_0+\tilde{\lambda}_2 \bar{m}_X+\tilde{\lambda}_3 \hE[\bar{x}_j]\big)\Big)\Big\}\ud t+\bar{z}_j\ud\wt{W}_j(t),\\
&\bar{x}_j(0)=\zeta_j,\ \bar{y}_j(T)=\tilde{H}\bar{x}_j(T),\ j=1,2,\ldots,N_f,
\end{aligned}\right.\end{equation*}
As all agents are statistically identical, thus we can suppress subscript ``$j$'' and the following consistency condition system arises for generic agent:
\begin{equation}\left\{\begin{aligned}\label{G2}
&\ud\bar{x}=\{\tilde{A}\bar{x}-\tilde{B}\tilde{R}^{-1}(\tilde{B}^\top\bar{y}+\tilde{D}^\top\bar{z})+F_1\hE[\bar{x}]\}\ud t\\
&\qq\qq+\{\tilde{C}\bar{x}-\tilde{D}\tilde{R}^{-1}(\tilde{B}^\top\bar{y}+\tilde{D}^\top\bar{z})+F_2\hE[\bar{x}]\}\ud\widetilde{W}(t)\\
&\ud\bar{y}=-\Big\{\tilde{A}^\top\bar{y}+\tilde{C}^\top\bar{z}+ \tilde{Q}\Big(\bar{x}-\big(\tilde{\lambda}_1X_0+\tilde{\lambda}_2 \bar{m}_X+\tilde{\lambda}_3 \hE[\bar{x}]\big)\Big)\Big\}\ud t+\bar{z}\ud\wt{W}(t),\\
&\bar{x}(0)=\zeta,\ \bar{y}(T)=\tilde{H}\bar{x}(T),
\end{aligned}\right.\end{equation}
where $\wt{W}$ stands for a generic Brownian motion on $(\Omega,\cF,\hP)$ and it is independent of $W_0$. $\zeta$ is a representative element of $\{\zeta_j\}_{1\leq j\leq N_f}$, and $X_0(\cdot)$, $\bar{m}_X(\cdot)$ are to be determined.

\subsection{Open-loop strategies for the major leader}
Once \textbf{Problem (OL1)} is solved, we turn to solve \textbf{Problem (OL2)} about the major leader (agent $\cA_0$). Note that when the followers take their optimal respond $\bar{v}_j(\cdot)$ given by \eqref{C6e12}, the major leader ends up with the following state equation system:
\begin{equation}\label{C6e34}
\left \{ \begin{aligned}
&\ud\bar{X}_0=\{A_0\bar{X}_0+B_0u_0+E_0^1\bar{m}_X+F_0^1\hE[\bar{x}]\}\ud t+\{C_0\bar{X}_0+D_0u_0+E_0^2\bar{m}_X+F_0^2\hE[\bar{x}]\}\ud W_0(t), \\
&\ud\bar{x}=\{\tilde{A}\bar{x}-\tilde{B}\tilde{R}^{-1}(\tilde{B}^\top\bar{y}+\tilde{D}^\top\bar{z})+F_1\hE[\bar{x}]\}\ud t+\{\tilde{C}\bar{x}-\tilde{D}\tilde{R}^{-1}(\tilde{B}^\top\bar{y}+\tilde{D}^\top\bar{z})+F_2\hE[\bar{x}]\}\ud\widetilde{W}(t),\\
&\ud\bar{y}=-\Big\{\tilde{A}^\top\bar{y}+\tilde{C}^\top\bar{z}+ \tilde{Q}\Big(\bar{x}-\big(\tilde{\lambda}_1\bar{X}_0+\tilde{\lambda}_2 \bar{m}_X+\tilde{\lambda}_3 \hE[\bar{x}]\big)\Big)\Big\}\ud t+\bar{z}\ud\wt{W}(t),\\
&\bar{X}_0(0)=\xi_0,\ \bar{x}(0)=\zeta,\ \bar{y}(T)=\tilde{H}\bar{x}(T).
\end{aligned} \right.
\end{equation}
And its cost functional is given by \eqref{C6e7}. Note that equation \eqref{C6e34} is a two-point boundary value problem for SDEs, which is what we call a \emph{forward-backward stochastic differential equation} (FBSDE; see \cite{MY99,Y99,Y02-LQ,Y10}) and the cost functional is still linear quadratic form. Hence, we are going to solve the LQ problem for a FBSDE. Noting that this FBSDE is coupled, therefore, it is not so easy to deal with it. Let us keep in mind that the ``state'' for \eqref{C6e34} is the triple $(\bar{X}_0(\cdot),\bar{x}(\cdot),\bar{y}(\cdot))$. The main result of this section can be stated as follows.

\begin{theorem}\label{C6l2}
Under assumptions \emph{\textbf{(H1)}}, \emph{\textbf{(H2)}}, and let $\xi_0,\zeta\in L^2_{\cF_0}(\Omega;\hR^n)$, $\bar{m}_X(\cd)\in L^2(0,T;\hR^n)$ be given. Then $\bar{u}_0(\cd)\in\cU_0^d[0,T]$ is an open-loop decentralized optimal control of \emph{\textbf{Problem (OL2)}} for initial value $\xi_0$ if and only if the following two conditions hold:
\begin{enumerate}
  \item[\emph{(i)}] The adapted solution $(\bar{X}_0(\cd),\bar{x}(\cd),(\bar{y}(\cd),\bar{z}(\cd)),(Y_0(\cd),Z_0(\cd)),(p(\cd),q(\cd)),K(\cd))$ to the FBSDE on $[0,T]$
  \begin{equation}\label{C6e13}
  \left \{ \begin{aligned}
  &\ud\bar{X}_0=\{A_0\bar{X}_0+B_0\bar{u}_0+E_0^1\bar{m}_X+F_0^1\hE[\bar{x}]\}\ud t+\{C_0\bar{X}_0+D_0\bar{u}_0+E_0^2\bar{m}_X+F_0^2\hE[\bar{x}]\}\ud W_0(t), \\
  &\ud\bar{x}=\{\tilde{A}\bar{x}-\tilde{B}\tilde{R}^{-1}(\tilde{B}^\top\bar{y}+\tilde{D}^\top\bar{z})+F_1\hE[\bar{x}]\}\ud t+\{\tilde{C}\bar{x}-\tilde{D}\tilde{R}^{-1}(\tilde{B}^\top\bar{y}+\tilde{D}^\top\bar{z})+F_2\hE[\bar{x}]\}\ud\widetilde{W}(t),\\
  &\ud\bar{y}=-\Big\{\tilde{A}^\top\bar{y}+\tilde{C}^\top\bar{z}+ \tilde{Q}\Big(\bar{x}-\big(\tilde{\lambda}_1\bar{X}_0+\tilde{\lambda}_2 \bar{m}_X+\tilde{\lambda}_3 \hE[\bar{x}]\big)\Big)\Big\}\ud t+\bar{z}\ud\wt{W}(t),\\
  &\ud Y_0=-\{A_0^\top Y_0+C_0^\top Z_0+Q_0(\bar{X}_0-(\l_0\bar{m}_X+(1-\l_0)\hE[\bar{x}]))+\tilde{Q}\tilde{\l}_1K\}\ud t+Z_0\ud W_0(t),\\
  &\ud p=-\{\tilde{A}^\top p+\tilde{C}^\top q+{F_0^1}^\top\hE[Y_0]+{F_0^2}^\top\hE[Z_0]+F_1^{\top}\hE[p]+F_2^{\top}\hE[q]+\tilde{Q}\wt{\l}_3\hE[K]\\
  &\qq\qq-(1-\l_0)Q_0(\bar{X}_0-(\l_0\bar{m}_X+(1-\l_0)\hE[\bar{x}]))-\tilde{Q}K\}\ud t+q\ud\wt{W}(t),\\
  &\ud K=\{\tilde{A}K+\tilde{B}\tilde{R}^{-1}\tilde{B}^\top p+\tilde{B}\tilde{R}^{-1}\tilde{D}^\top q\}\ud t+\{\tilde{C}K+\tilde{D}\tilde{R}^{-1}\tilde{B}^\top p+\tilde{D}\tilde{R}^{-1}\tilde{D}^\top q\}\ud\wt{W}(t),\\
  &\bar{X}_0(0)=\xi_0,\ \bar{x}(0)=\zeta,\ \bar{y}(T)=\tilde{H}\bar{x}(T),\ Y_0(T)=H_0\bar{X}_0(T),\ p(T)=-\tilde{H}K(T),\ K(0)=0,\\
  \end{aligned} \right.
  \end{equation}
  satisfies the following stationarity condition:
  \begin{equation}\label{C6e24}
  B_0^\top Y_0+D_0^\top Z_0+R_0\bar{u}_{0}=0,\qq \mathrm{a.e.}\ t\in[0,T],\ \mathrm{a.s.}
  \end{equation}
  \item[\emph{(ii)}] The following convexity condition holds:
  \begin{equation}\begin{aligned}\label{C6e27}
    &\mathbb{E}\Big\{\int_0^T \Big(\Big\langle Q_0\Big(X_0-(1-\lambda_0)x\Big),\Big(X_0-(1-\lambda_0)x\Big)\Big\rangle+\Big\langle R_0u_0,u_0\Big\rangle \Big)\ud t\\
    &\qq\qq\qq+\Big\langle H_0X_0(T),X_0(T)\Big\rangle \Big\}\geq 0,\qq\forall u_0(\cd)\in\cU_0^d[0,T],
  \end{aligned}\end{equation}
  where $(X_0(\cd),x(\cd))$ is the solution to the FBSDE
  \begin{equation}\label{C6e20}\left\{\begin{aligned}
  &\ud X_0=\{A_0X_0+B_0u_0+F_0^1\hE[x]\}\ud t+\{C_0X_0+D_0u_0+F_0^2\hE[x]\}\ud W_0(t), \\
  &\ud x=\{\tilde{A}x-\tilde{B}\tilde{R}^{-1}(\tilde{B}^\top y+\tilde{D}^\top z)+F_1\hE[x]\}\ud t+\{\tilde{C}x-\tilde{D}\tilde{R}^{-1}(\tilde{B}^\top y+\tilde{D}^\top z)+F_2\hE[x]\}\ud\widetilde{W}(t),\\
  &\ud y=-\Big\{\tilde{A}^\top y+\tilde{C}^\top z+ \tilde{Q}\Big(x-\big(\tilde{\lambda}_1X_0+\tilde{\lambda}_3 \hE[x]\big)\Big)\Big\}\ud t+z\ud\wt{W}(t),\\
  &X(0)=0,\qq x(0)=0,\qq y(T)=\tilde{H}x(T).
  \end{aligned}\right.\end{equation}
  Or, equivalently, the map $u_0(\cd)\mapsto J_0(\xi_0,\bar{m}_X(\cdot),\bar{m}_x(\cdot);u_0(\cdot))$ is convex.
\end{enumerate}
\end{theorem}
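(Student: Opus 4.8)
The plan is to carry out the same convex-variational analysis as in the proof of Theorem~\ref{C6l1}, now for the linear-quadratic problem whose controlled ``state'' is the triple $(\bar X_0,\bar x,\bar y)$ driven by the mean-field FBSDE~\eqref{C6e34} (with $\bar m_x$ identified with $\hE[\bar x]$ via the followers' consistency condition) and whose cost is~\eqref{C6e7}. First I would fix a candidate $\bar u_0(\cd)\in\cU_0^d[0,T]$ with corresponding solution $(\bar X_0,\bar x,\bar y,\bar z)$ of~\eqref{C6e34}, pick an arbitrary $u_0(\cd)\in\cU_0^d[0,T]$ and $\e\in\hR$, and write the solution for the control $\bar u_0+\e u_0$. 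Since~\eqref{C6e34} is linear in the pair (state, control), the perturbed solution is exactly $(\bar X_0,\bar x,\bar y,\bar z)+\e(X_0,x,y,z)$, where $(X_0,x,y,z)$ solves the homogeneous variational system~\eqref{C6e20}; the frozen term $\bar m_X$ drops out of the variation, while $\hE[\bar x]$ does contribute the variation $\hE[x]$ because it is endogenous. The crucial structural feature, which is what generates the extra adjoint equations below, is that this variational system is itself a (mean-field) FBSDE, not merely a forward SDE as in~\eqref{b2}.

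Next I would expand the cost. A direct computation gives $J_0(\xi_0,\bar m_X,\bar m_x;\bar u_0+\e u_0)-J_0(\xi_0,\bar m_X,\bar m_x;\bar u_0)=\e\,\cI_1+\tfrac{\e^2}{2}\,\cI_2$, where $\cI_2$ is exactly the left-hand side of~\eqref{C6e27}; being the Hessian, it does not depend on $\bar u_0$, which also yields the stated equivalence with the convexity of $u_0(\cd)\mapsto J_0$. The first-order term is
$$\cI_1=\hE\Big\{\int_0^T\Big(\big\langle Q_0(\bar X_0-\l_0\bar m_X-(1-\l_0)\hE[\bar x]),\,X_0-(1-\l_0)\hE[x]\big\rangle+\langle R_0\bar u_0,u_0\rangle\Big)\ud t+\langle H_0\bar X_0(T),X_0(T)\rangle\Big\}.$$
Using the identity $\hE[\langle\hE[a],b\rangle]=\hE[\langle a,\hE[b]\rangle]$, the mean-field terms in $\cI_1$ may be rearranged so that $X_0$, $x$, $y$ enter linearly.

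The heart of the argument is to represent $\cI_1$ as $\hE\int_0^T\langle B_0^\top Y_0+D_0^\top Z_0+R_0\bar u_0,u_0\rangle\ud t$ by introducing processes dual to~\eqref{C6e20}. Since $\bar X_0,\bar x$ are forward while $\bar y$ is backward, the natural adjoints are $(Y_0,Z_0)$ (backward, dual to $\bar X_0$), $(p,q)$ (backward, dual to $\bar x$) and $K$ (forward, dual to $\bar y$), with $K(0)=0$ forced by $x(0)=0$ and $p(T)=-\tilde HK(T)$ forced by the coupling $y(T)=\tilde Hx(T)$. Applying It\^{o}'s formula to $\langle Y_0,X_0\rangle+\langle p,x\rangle+\langle K,y\rangle$ on $[0,T]$, taking expectations, invoking~\eqref{C6e20}, and requiring that every term not paired with $u_0$ cancel, one is forced into exactly the adjoint system appearing in~\eqref{C6e13} --- the $Y_0$-, $p$- and $K$-equations, with their cross-terms $\tilde Q\tilde\lambda_1K$, ${F_0^1}^\top\hE[Y_0]+{F_0^2}^\top\hE[Z_0]$, $F_1^\top\hE[p]+F_2^\top\hE[q]$, $\tilde Q\tilde\lambda_3\hE[K]$, $-(1-\l_0)Q_0(\cdots)$, $-\tilde QK$, the feedback terms in the $K$-equation, and the boundary data $Y_0(T)=H_0\bar X_0(T)$, $p(T)=-\tilde HK(T)$, $K(0)=0$ --- leaving the residual $\hE\int_0^T\langle B_0^\top Y_0+D_0^\top Z_0,u_0\rangle\ud t$. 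Adding the $\langle R_0\bar u_0,u_0\rangle$ term from $\cI_1$ gives the representation, whence $J_0(\cdots;\bar u_0+\e u_0)-J_0(\cdots;\bar u_0)=\e\,\hE\int_0^T\langle B_0^\top Y_0+D_0^\top Z_0+R_0\bar u_0,u_0\rangle\ud t+\tfrac{\e^2}{2}\cI_2$. Then $\bar u_0$ is optimal if and only if this is nonnegative for all $u_0(\cd)\in\cU_0^d[0,T]$ and all $\e\in\hR$; exactly as in Theorem~\ref{C6l1}, this holds if and only if the $\e$-linear coefficient vanishes for every $u_0$ (i.e.\ the stationarity condition~\eqref{C6e24}) and $\cI_2\geq0$ (i.e.\ the convexity condition~\eqref{C6e27}).

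The step I expect to be the main obstacle is the derivation and bookkeeping of the coupled adjoint FBSDE~\eqref{C6e13}: because the controlled ``state'' is itself a mean-field FBSDE, three adjoint processes ($Y_0,p$ backward and $K$ forward) are needed rather than one, and the It\^{o}/duality computation must reproduce every cross-term with the correct transposes and signs, must close the mixed boundary data $Y_0(T)=H_0\bar X_0(T)$, $p(T)=-\tilde HK(T)$, $K(0)=0$ consistently, and must carry the mean-field expectations $\hE[\bar x],\hE[Y_0],\hE[Z_0],\hE[p],\hE[q],\hE[K]$ correctly through the duality pairing. Everything else --- the perturbation expansion and the concluding inequality argument --- is routine and parallels Theorem~\ref{C6l1}.
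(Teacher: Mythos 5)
Your proposal follows exactly the paper's argument: the same perturbation $\bar u_0+\e u_0$ with the linear variational FBSDE \eqref{C6e20}, the same expansion of the cost into $\e\,\cI_1+\tfrac{\e^2}{2}\cI_2$ with $\cI_2$ equal to the left-hand side of \eqref{C6e27}, the same It\^{o} duality applied to $\langle Y_0,X_0\rangle+\langle p,x\rangle+\langle K,y\rangle$ to reduce $\cI_1$ to $\hE\int_0^T\langle B_0^\top Y_0+D_0^\top Z_0+R_0\bar u_0,u_0\rangle\ud t$, and the same concluding equivalence. The proposal is correct and takes essentially the same route as the paper, with somewhat more explicit motivation for why the three adjoint processes and their boundary conditions arise.
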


\begin{proof}
For given $\xi_0,\zeta\in L^2_{\cF_0}(\Omega;\hR^n)$, $\bar{m}_X(\cd)\in L^2(0,T;\hR^n)$, and $\bar{u}_0(\cd)\in\cU_0^d[0,T]$, let $(\bar{X}_0(\cd)$, $\bar{x}(\cd)$, $(\bar{y}(\cd),\bar{z}(\cd))$, $(Y_0(\cd),Z_0(\cd))$, $(p(\cd),q(\cd))$, $K(\cd))$ be adapted solution to FBSDE \eqref{C6e13}. For any $u_0(\cd)\in\cU_0^d[0,T]$ and $\e\in\hR$, let $X_0^\e(\cd)$, $x^\e(\cd)$, $(y^\e(\cd),z^e(\cd))$ be the solution to the following perturbed state equation on $[0,T]$:
\begin{equation*}\left\{\begin{aligned}
  &\ud X_0^\e=\{A_0X_0^\e+B_0(\bar{u}_0+\e u_0)+E_0^1\bar{m}_X+F_0^1\hE[x^\e]\}\ud t\\
  &\qq\qq+\{C_0X_0^\e+D_0(\bar{u}_0+\e u_0)+E_0^2\bar{m}_X+F_0^2\hE[x^\e]\}\ud W_0(t), \\
  &\ud x^\e=\{\tilde{A}x^\e-\tilde{B}\tilde{R}^{-1}(\tilde{B}^\top y^\e+\tilde{D}^\top z^\e)+F_1\hE[x^\e]\}\ud t\\
  &\qq\qq+\{\tilde{C}x^\e-\tilde{D}\tilde{R}^{-1}(\tilde{B}^\top y^\e+\tilde{D}^\top z^\e)+F_2\hE[x^\e]\}\ud\widetilde{W}(t),\\
  &\ud y^\e=-\Big\{\tilde{A}^\top y^\e+\tilde{C}^\top z^\e+ \tilde{Q}\Big(x^\e-\big(\tilde{\lambda}_1X_0^\e+\tilde{\lambda}_2 \bar{m}_X+\tilde{\lambda}_3\hE[x^\e]\big)\Big)\Big\}\ud t+z^\e\ud\wt{W}(t),\\
  &X_0^\e(0)=\xi_0,\ x^\e(0)=\zeta,\ y^\e(T)=\tilde{H}x^\e(T).
\end{aligned}\right.\end{equation*}
Then denoting $(X_0(\cd),x(\cd),(y(\cd),z(\cd)))$ the solution to the FBSDE \eqref{C6e20}, we have $X_0^\e(\cd)=\bar{X}_0(\cd)+\e X_0(\cd)$, $x^\e(\cd)=\bar{x}(\cd)+\e x(\cd)$, $y^\e(\cd)=\bar{y}(\cd)+\e y(\cd)$, $z^\e(\cd)=\bar{z}(\cd)+\e z(\cd)$ and
\begin{equation*}\begin{aligned}
&J_0(\xi_0,\bar{m}_X(\cdot),\hE[\bar{x}+\e x];\bar{u}_0(\cdot)+\e u_0(\cdot))-J_0(\xi_0,\bar{m}_X(\cdot),\hE[\bar{x}] ;\bar{u}_0(\cdot))\\
=&\frac{\e}{2}\mathbb{E}\Big\{\int_0^T \Big(\Big\langle Q_0\Big(2\bar{X}_0-2\big(\lambda_0 \bar{m}_X+(1-\lambda_0)\hE[\bar{x}]\big)+\e \Big(X_0-(1-\lambda_0)\hE[x]\Big)\Big),\\
&\Big(X_0-(1-\lambda_0)\hE[x]\Big)\Big\rangle+\Big\langle R_0(2\bar{u}_0+\e u_0),u_0\Big\rangle \Big)\ud t+\Big\langle H_0(2\bar{X}_0(T)+\e X_0(T)),X_0(T)\Big\rangle \Big\}\\
=&\e\mathbb{E}\Big\{\int_0^T \Big(\Big\langle Q_0\Big(\bar{X}_0-\big(\lambda_0 \bar{m}_X+(1-\lambda_0)\hE[\bar{x}]\big)\Big),\Big(X_0-(1-\lambda_0)\hE[x]\Big)\Big\rangle +\Big\langle R_0\bar{u}_0,u_0\Big\rangle \Big)\ud t\\
&+\Big\langle H_0\bar{X}_0(T),X_0(T)\Big\rangle \Big\}+\frac{\e^2}{2}\mathbb{E}\Big\{\int_0^T \Big(\Big\langle Q_0\Big(X_0-(1-\lambda_0)x\Big),\Big(X_0-(1-\lambda_0)x\Big)\Big\rangle \\
&+\Big\langle R_0u_0,u_0\Big\rangle \Big)\ud t+\Big\langle H_0X_0(T),X_0(T)\Big\rangle \Big\}.\\
\end{aligned}\end{equation*}
On the other hand, applying It\^{o}'s formula to $\langle Y_0,X_0\rangle+\langle p,x\rangle+\langle K,y\rangle$, and taking expectation, we obtain
\begin{equation*}\begin{aligned}
\mathbb{E}\Big[H_0\bar{X}_0(T)X_0(T)\Big]&=\mathbb{E}\Big\{\int_0^T \Big(-\Big\langle Q_0\Big(\bar{X}_0-\big(\lambda_0 \bar{m}_X+(1-\lambda_0)\hE[\bar{x}]\big)\Big),\Big(X_0-(1-\lambda_0)\hE[x]\Big)\Big\rangle\\
&\q+\Big\langle B_0^\top Y_0+D_0^\top Z_0, u_0\Big\rangle \Big)\ud t.
\end{aligned}\end{equation*}
Hence,
\begin{equation*}\begin{aligned}
&J_0(\xi_0,\bar{m}_X(\cdot),\hE[\bar{x}+\e x];\bar{u}_0(\cdot)+\e u_0(\cdot))-J_0(\xi_0,\bar{m}_X(\cdot),\hE[\bar{x}] ;\bar{u}_0(\cdot))\\
=&\frac{\e^2}{2}\mathbb{E}\Big\{\int_0^T \Big(\Big\langle Q_0\Big(X_0-(1-\lambda_0)x\Big),\Big(X_0-(1-\lambda_0)x\Big)\Big\rangle+\Big\langle R_0u_0,u_0\Big\rangle \Big)\ud t\\
&+\Big\langle H_0X_0(T),X_0(T)\Big\rangle\Big\}+\e\mathbb{E}\Big\{\int_0^T \Big\langle B_0^\top Y_0+D_0^\top Z_0+R_0\bar{u}_0, u_0\Big\rangle\ud t\Big\}\\
\end{aligned}\end{equation*}
It follows that
\begin{equation*}
J_0(\xi_0,\bar{m}_X(\cdot),\hE[\bar{x}] ;\bar{u}_0(\cdot))\leq J_0(\xi_0,\bar{m}_X(\cdot),\hE[\bar{x}+\e x];\bar{u}_0(\cdot)+\e u_0(\cdot)),\ \forall u_0(\cd)\in\cU_0^d[0,T],\ \forall\e\in\hR,
\end{equation*}
if and only if \eqref{C6e24} and \eqref{C6e27} hold.
\end{proof}

Similarly, if we assume $R_0$ is invertible, then we can represent the optimal control by
\begin{equation}\label{C6e28}
\bar{u}_0=-R_0^{-1}(B_0^\top Y_0+D_0^\top Z_0).
\end{equation}
Then the following coupled system follows
\begin{equation}\label{C6e29}
\left\{ \begin{aligned}
  &\ud\bar{X}_0=\{A_0\bar{X}_0-B_0R_0^{-1}(B_0^\top Y_0+D_0^\top Z_0)+E_0^1\bar{m}_X+F_0^1\hE[\bar{x}]\}\ud t\\
  &\qq\qq+\{C_0\bar{X}_0-D_0R_0^{-1}(B_0^\top Y_0+D_0^\top Z_0)+E_0^2\bar{m}_X+F_0^2\hE[\bar{x}]\}\ud W_0(t), \\
  &\ud\bar{x}=\{\tilde{A}\bar{x}-\tilde{B}\tilde{R}^{-1}(\tilde{B}^\top\bar{y}+\tilde{D}^\top\bar{z})+F_1\hE[\bar{x}]\}\ud t+\{\tilde{C}\bar{x}-\tilde{D}\tilde{R}^{-1}(\tilde{B}^\top\bar{y}+\tilde{D}^\top\bar{z})+F_2\hE[\bar{x}]\}\ud\widetilde{W}(t)\\
  &\ud K=\{\tilde{A}K+\tilde{B}\tilde{R}^{-1}\tilde{B}^\top p+\tilde{B}\tilde{R}^{-1}\tilde{D}^\top q\}\ud t+\{\tilde{C}K+\tilde{D}\tilde{R}^{-1}\tilde{B}^\top p+\tilde{D}\tilde{R}^{-1}\tilde{D}^\top q\}\ud\wt{W}(t)\\
  &\ud Y_0=-\{A_0^\top Y_0+C_0^\top Z_0+Q_0(\bar{X}_0-(\l_0\bar{m}_X+(1-\l_0)\hE[\bar{x}]))+\tilde{Q}\tilde{\l}_1K\}\ud t+Z_0\ud W_0(t),\\
  &\ud\bar{y}=-\Big\{\tilde{A}^\top\bar{y}+\tilde{C}^\top\bar{z}+ \tilde{Q}\Big(\bar{x}-\big(\tilde{\lambda}_1\bar{X}_0+\tilde{\lambda}_2 \bar{m}_X+\tilde{\lambda}_3 \hE[\bar{x}]\big)\Big)\Big\}\ud t+\bar{z}\ud\wt{W}(t),\\
  &\ud p=-\{\tilde{A}^\top p+\tilde{C}^\top q+{F_0^1}^\top\hE[Y_0]+{F_0^2}^\top\hE[Z_0]+F_1^{\top}\hE[p]+F_2^{\top}\hE[q]+\tilde{Q}\wt{\l}_3\hE[K]\\
  &\qq\qq-(1-\l_0)Q_0(\bar{X}_0-(\l_0\bar{m}_X+(1-\l_0)\hE[\bar{x}]))-\tilde{Q}K\}\ud t+q\ud\wt{W}(t),\\
  &\bar{X}_0(0)=\xi_0,\ \bar{x}(0)=\zeta,\ K(0)=0,\ Y_0(T)=H_0\bar{X}_0(T),\ \bar{y}(T)=\tilde{H}\bar{x}(T),\ p(T)=-\tilde{H}K(T),\\
\end{aligned}\right.
\end{equation}
where $\bar{m}_X(\cdot)$ is to be determined.

\subsection{Open-loop strategies for the minor leaders}
Once \textbf{Problem (OL2)} is solved, we turn to solve \textbf{Problem (OL3)} about the minor leaders (agents $\cA_i^l$, $1\leq i\leq N_l$). Note that when the followers takes their optimal responds $\bar{v}_j(\cdot)$ given by \eqref{C6e12}, and the major leader takes his optimal control $\bar{u}_0(\cdot)$ given by \eqref{C6e28}, the minor leaders ends up with the following state equation system:
\begin{equation*}\left\{\begin{aligned}
&\ud \bar{X}_i=\{A\bar{X}_i+Bu_i+E_1\bar{m}_X\}\ud t+\{C\bar{X}_i+Du_i+E_2\bar{m}_X\}\ud W_i(t)\\
&\bar{X}_i(0)=\xi_i,\ i=1,2,\ldots,N_l.
\end{aligned}\right.\end{equation*}
And its cost functional is given by \eqref{C6e8} with $\bar{X}_0(\cdot)$ being from \eqref{C6e29}. So it is similar to solve \textbf{Problem (OL1)}, and the main result in this section can be stated as follows.
\begin{theorem}\label{C6l3}
Under assumptions \emph{\textbf{(H1)}}, \emph{\textbf{(H2)}}, and let $\xi_0,\xi_i\in L^2_{\cF_0}(\Omega;\hR^n)$, $\bar{u}_0(\cd)\in \cU_0^d[0,T]$, $\bar{m}_X(\cd)\in L^2(0,T;\hR^n)$ be given. Then $\bar{u}_i(\cd)\in\cU_i^d[0,T]$ is a decentralized optimal control of \emph{\textbf{Problem (OL3)}} for initial value $\xi_i$ if and only if the following two conditions hold:
\begin{enumerate}
  \item[\emph{(i)}] For $i=1,2,\ldots,N_l$, the adapted solution $(\bar{X}_i(\cd),\bar{Y}_i(\cd),\bar{Z}_i(\cd))$ to the FBSDE on $[0,T]$
  \begin{equation}\label{C6e23}
  \left \{ \begin{aligned}
    &\ud\bar{X}_i=\{A\bar{X}_i+B\bar{u}_i+E_1\bar{m}_X\}\ud t+\{C\bar{X}_i+D\bar{u}_i+E_2\bar{m}_X\}\ud W_i(t)\\
    &\ud\bar{Y}_i=-\Big\{A^\top\bar{Y}_i+C^\top\bar{Z}_i+Q\Big(\bar{X}_i-\big(\lambda\bar{m}_X+(1-\lambda)\bar{X}_0 \big)\Big)\Big\}\ud t+\bar{Z}_i\ud W_i(t),\\
    &\bar{X}_i(0)=\xi_i,\ \bar{Y}_i(T)=H\bar{X}_i(T),
  \end{aligned} \right.\end{equation}
  satisfies the following stationarity condition:
  \begin{equation}\label{C6e22}
    B^\top \bar{Y}_i+R\bar{u}_i+D^\top \bar{Z}_i=0,\qq \mathrm{a.e.}\ t\in[0,T],\ \mathrm{a.s.}
  \end{equation}
  \item[\emph{(ii)}] For $i=1,2,\ldots,N_l$, the following convexity condition holds:
  \begin{equation}\label{b5}
    \mathbb{E}\Big\{\int_0^T \Big(\Big\langle QX_i,X_i\Big\rangle+\Big\langle Ru_i,u_i\Big\rangle\Big)\ud t+\Big\langle HX_i(T),X_i(T)\Big\rangle\Big\}\geq 0,\q\forall u_i(\cd)\in\cU_i^d[0,T],
  \end{equation}
  where $X_i(\cd)$ is the solution to the FSDE
  \begin{equation}\left\{\begin{aligned}\label{b6}
    &\ud X_i=\Big\{AX_i+Bu_i\Big\}\ud t+\Big\{CX_i+Du_i\Big\}\ud W_i(t), \qq t\in[0,T],\\
    &X_i(0)=0.
  \end{aligned}\right.\end{equation}
  Or, equivalently, the map $u_i(\cd)\mapsto J_i^l(\xi_0,\xi_i,\bar{m}_X(\cdot);\bar{u}_0(\cdot),u_i(\cdot))$ is convex (for $i=1,2,\ldots,N_l$).
\end{enumerate}
\end{theorem}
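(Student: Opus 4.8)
The plan is to establish the equivalence exactly as in the proof of Theorem~\ref{C6l1}, via the convex variational (perturbation) method, since \textbf{Problem (OL3)} is once again a standard LQ stochastic control problem: the minor leader's state \eqref{C6e23} is a single controlled forward SDE, and in the running cost the process $\bar{X}_0(\cdot)$ is frozen, being already pinned down by \eqref{C6e29} together with the given $\bar{u}_0$ and $\bar{m}_X$; hence only $\bar{X}_i$ reacts to a perturbation of $u_i$. First I would fix the data $\xi_0,\xi_i,\bar{u}_0(\cd),\bar{m}_X(\cd)$ and a candidate $\bar{u}_i(\cd)\in\cU_i^d[0,T]$ with associated triple $(\bar{X}_i,\bar{Y}_i,\bar{Z}_i)$ solving \eqref{C6e23}. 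For arbitrary $u_i(\cd)\in\cU_i^d[0,T]$ and $\e\in\hR$ I would introduce the perturbed control $\bar{u}_i+\e u_i$ with state $X_i^\e$; by linearity of \eqref{C6e23} and \eqref{b6}, $X_i^\e=\bar{X}_i+\e X_i$ where $X_i$ solves \eqref{b6} with zero initial value.

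Next I would expand the difference $J_i^l(\xi_0,\xi_i,\bar{m}_X(\cd);\bar{u}_0(\cd),\bar{u}_i(\cd)+\e u_i(\cd))-J_i^l(\xi_0,\xi_i,\bar{m}_X(\cd);\bar{u}_0(\cd),\bar{u}_i(\cd))$ into a term linear in $\e$ and a term quadratic in $\e$, the quadratic coefficient being precisely $\tfrac12$ times the left-hand side of the convexity condition \eqref{b5}. To simplify the linear term I would apply It\^o's formula to $\langle\bar{Y}_i,X_i\rangle$, take expectations, and use the terminal condition $\bar{Y}_i(T)=H\bar{X}_i(T)$ and the backward dynamics of $\bar{Y}_i$: the drift term $Q(\bar{X}_i-(\lambda\bar{m}_X+(1-\lambda)\bar{X}_0))$ in $\bar{Y}_i$ cancels the matching cross term produced in the expansion of $J_i^l$, while the $\ud W_i$-contributions of $Bu_i$ and $Du_i$ assemble into $B^\top\bar{Y}_i+D^\top\bar{Z}_i$. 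This yields the identity that the cost difference equals $\e\,\hE\int_0^T\langle B^\top\bar{Y}_i+R\bar{u}_i+D^\top\bar{Z}_i,u_i\rangle\ud t+\tfrac{\e^2}{2}\,\hE\{\int_0^T(\langle QX_i,X_i\rangle+\langle Ru_i,u_i\rangle)\ud t+\langle HX_i(T),X_i(T)\rangle\}$.

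From this identity the equivalence is standard. If the linear functional $u_i\mapsto\hE\int_0^T\langle B^\top\bar{Y}_i+R\bar{u}_i+D^\top\bar{Z}_i,u_i\rangle\ud t$ is nonzero for some $u_i$, then choosing $\e$ of the appropriate sign and small magnitude strictly decreases the cost, so optimality forces the stationarity condition \eqref{C6e22}; given \eqref{C6e22}, optimality of $\bar{u}_i$ over all $\e\in\hR$ and all $u_i$ is exactly the nonnegativity \eqref{b5}; conversely \eqref{C6e22} together with \eqref{b5} makes $\bar{u}_i$ a global minimizer. The equivalence of \eqref{b5} with convexity of $u_i\mapsto J_i^l(\xi_0,\xi_i,\bar{m}_X(\cd);\bar{u}_0(\cd),u_i(\cd))$ is immediate, since that map is quadratic in $u_i$ with \eqref{b5} as its (doubled) second-order term. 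I do not anticipate a genuine obstacle; the only point requiring care is the bookkeeping in the It\^o computation — in particular tracking the $\ud W_i$ cross terms generated by $Du_i$ so that they combine correctly into the stationarity expression — but this is entirely parallel to the computations already carried out for Theorems~\ref{C6l1} and \ref{C6l2}.
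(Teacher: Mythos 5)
Your proposal is correct and follows essentially the same route as the paper's own proof: perturb $\bar{u}_i$ to $\bar{u}_i+\e u_i$, use linearity to write $X_i^\e=\bar{X}_i+\e X_i$, expand the cost into linear and quadratic parts in $\e$, and apply It\^{o}'s formula to $\langle\bar{Y}_i,X_i\rangle$ so that the linear term reduces to $\e\,\hE\int_0^T\langle B^\top\bar{Y}_i+R\bar{u}_i+D^\top\bar{Z}_i,u_i\rangle\ud t$, from which the equivalence with \eqref{C6e22} and \eqref{b5} follows. No substantive differences from the paper's argument.
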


\begin{proof}
For given $\xi_0,\xi_i\in L^2_{\cF_0}(\Omega;\hR^n)$, $\bar{u}_0(\cd)\in\cU_0^d[0,T]$, $\bar{m}_X(\cd)\in L^2(0,T;\hR^n)$, and $\bar{u}_i(\cd)\in\cU_i^d[0,T]$, let $(\bar{X}_i(\cd)$, $\bar{Y}_i(\cd)$, $\bar{Z}_i(\cd))$ be adapted solution to FBSDE \eqref{C6e23}. For any $u_i(\cd)\in\cU_i^d[0,T]$ and $\e\in\hR$, let $X_i^\e(\cd)$ be the solution to the following perturbed state equation on $[0,T]$:
\begin{equation*}\left\{\begin{aligned}
  &\ud X_i^\e=\Big\{AX_i^\e+B(\bar{u}_i+\e u_i)+E_1\bar{m}_X\Big\}\ud t+\Big\{CX_i^\e+D(\bar{u}_i+\e u_i)+E_2\bar{m}_X\Big\}\ud W_i(t),\\
  &X_i^\e(0)=x.
\end{aligned}\right.\end{equation*}
Then denoting $X_i(\cd)$ the solution to the FSDE \eqref{b6}, we have $X_i^\e(\cd)=\bar{X}_i(\cd)+\e X_i(\cd)$ and
\begin{equation*}\begin{aligned}
&J_i^l(\xi_0,\xi_i,\bar{m}_X(\cdot);\bar{u}_0(\cdot),\bar{u}_i(\cdot)+\e u_i(\cdot))-J_i^l(\xi_0,\xi_i,\bar{m}_X(\cdot);\bar{u}_0(\cdot),\bar{u}_i(\cdot))\\
=&\frac{\e}{2}\mathbb{E}\Big\{\int_0^T \Big(\Big\langle Q\Big(2\bar{X}_i-2\big(\l\bar{m}_X+(1-\l)\bar{X}_0\big)+\e X_i\Big),X_i\Big\rangle+\Big\langle R(2\bar{u}_i+\e u_i),u_i\Big\rangle\Big)\ud t\\
&+\Big\langle H(2\bar{X}_i(T)+\e X_i(T)),X_i(T)\Big\rangle \Big\}\\
=&\e\mathbb{E}\Big\{\int_0^T \Big(\Big\langle Q\Big(\bar{X}_i-\big(\l\bar{m}_X+(1-\l)\bar{X}_0\big)\Big),X_i\Big\rangle +\Big\langle R \bar{u}_i,u_i\Big\rangle \Big)\ud t+\Big\langle H \bar{X}_i(T),X_i(T)\Big\rangle \Big\}\\
&+\frac{\e^2}{2}\mathbb{E}\Big\{\int_0^T \Big(\Big\langle QX_i,X_i\Big\rangle+\Big\langle Ru_i,u_i\Big\rangle\Big)\ud t+\Big\langle HX_i(T),X_i(T)\Big\rangle\Big\}.\\
\end{aligned}\end{equation*}
On the other hand, applying It\^{o}'s formula to $\Big\langle\bar{Y}_i,X_i\Big\rangle$, and taking expectation, we obtain
\begin{equation*}\begin{aligned}
\hE\Big[\Big\langle H\bar{X}_i(T),X_i(T)\Big\rangle \Big]
=&\hE\Big\{\int_0^T \Big(\Big\langle B^\top \bar{Y}_i+D^\top \bar{Z}_i,u_i\Big\rangle-\Big\langle Q\Big(\bar{X}_i-\big(\l\bar{m}_X+(1-\l)\bar{X}_0\big)\Big),X_i\Big\rangle \Big)\ud t\Big\}.\\
\end{aligned}\end{equation*}
Hence,
\begin{equation*}\begin{aligned}
&J_i^l(\xi_0,\xi_i,\bar{m}_X(\cdot);\bar{u}_0(\cdot),\bar{u}_i(\cdot)+\e u_i(\cdot))-J_i^l(\xi_0,\xi_i,\bar{m}_X(\cdot);\bar{u}_0(\cdot),\bar{u}_i(\cdot))\\
=&\e\mathbb{E}\Big\{\int_0^T\Big\langle B^\top\bar{Y}_i+R\bar{u}_i+D^\top\bar{Z}_i,u_i\Big\rangle \ud t\Big\}+\frac{\e^2}{2}\mathbb{E}\Big\{\int_0^T \Big(\Big\langle QX_i,X_i\Big\rangle+\Big\langle Ru_i,u_i\Big\rangle\Big)\ud t+\Big\langle HX_i(T),X_i(T)\Big\rangle\Big\}.\\
\end{aligned}\end{equation*}
It follows that
\begin{equation*}
J_i^l(\xi_0,\xi_i,\bar{m}_X(\cdot);\bar{u}_0(\cdot),\bar{u}_i(\cdot))\leq J_i^l(\xi_0,\xi_i,\bar{m}_X(\cdot);\bar{u}_0(\cdot),\bar{u}_i(\cdot)+\e u_i(\cdot)),\qq\forall u_i(\cd)\in\cU_i^d[0,T],\ \forall\e\in\hR,
\end{equation*}
if and only if \eqref{C6e22} and \eqref{b5} hold.
\end{proof}

Furthermore, if we assume that $R$ is invertible, then we have
\begin{equation}\label{C6e32}
\bar{u}_i=-R^{-1}(B^\top \bar{Y}_i+D^\top \bar{Z}_i),
\end{equation}
so the related Hamiltonian system can be represented by
\begin{equation*}\left\{\begin{aligned}
&\ud\bar{X}_i=\{A\bar{X}_i-BR^{-1}(B^\top \bar{Y}_i+D^\top \bar{Z}_i)+E_1\bar{m}_X\}\ud t\\
&\qq\qq+\{C\bar{X}_i-DR^{-1}(B^\top \bar{Y}_i+D^\top \bar{Z}_i)+E_2\bar{m}_X\}\ud W_i(t)\\
&\ud\bar{Y}_i=-\Big\{A^\top\bar{Y}_i+C^\top\bar{Z}_i+Q\Big(\bar{X}_i-\big(\lambda\bar{m}_X+(1-\lambda)\bar{X}_0\big)\Big)\Big\}\ud t+\bar{Z}_i\ud W_i(t),\\
&\bar{X}_i(0)=\xi_i,\ \bar{Y}_i(T)=H\bar{X}_i(T),
\end{aligned}\right.\end{equation*}
Based on the above analysis, it follows that
\begin{equation}\label{G3}
\bar{m}_X(\cdot)=\lim_{N_l\rightarrow+\infty}\frac{1}{N_l}\sum_{i=1}^{N_l}\bar{X}_i(\cdot)=\hE[\bar{X}_i(\cdot)].
\end{equation}
Here, the first equality of \eqref{G3} is due to the consistency condition: the frozen term $\bar{m}_X(\cdot)$ should equal to the average limit of all realized states $\bar{X}_i(\cdot)$; the second equality is due to the law of large numbers. Thus, by replacing $\bar{m}_X$ by $\hE[\bar{X}_i]$, we get the following system
\begin{equation*}\left\{\begin{aligned}
&\ud\bar{X}_i=\{A\bar{X}_i-BR^{-1}(B^\top \bar{Y}_i+D^\top \bar{Z}_i)+E_1\hE[\bar{X}_i]\}\ud t+\{C\bar{X}_i-DR^{-1}(B^\top \bar{Y}_i+D^\top \bar{Z}_i)+E_2\hE[\bar{X}_i]\}\ud W_i(t)\\
&\ud\bar{Y}_i=-\Big\{A^\top\bar{Y}_i+C^\top\bar{Z}_i+Q\Big(\bar{X}_i-\big(\lambda\hE[\bar{X}_i] +(1-\lambda)\bar{X}_0\big)\Big)\Big\}\ud t+\bar{Z}_i\ud W_i(t),\\
&\bar{X}_i(0)=\xi_i,\ \bar{Y}_i(T)=H\bar{X}_i(T),
\end{aligned}\right.\end{equation*}
As all agents are statistically identical, thus we can suppress subscript ``$i$'' and the following consistency condition system arises for generic agent:
\begin{equation}\left\{\begin{aligned}\label{G4}
&\ud\bar{X}=\{A\bar{X}-BR^{-1}(B^\top \bar{Y}+D^\top \bar{Z})+E_1\hE[\bar{X}]\}\ud t+\{C\bar{X}-DR^{-1}(B^\top \bar{Y}+D^\top \bar{Z})+E_2\hE[\bar{X}]\}\ud W(t)\\
&\ud\bar{Y}=-\Big\{A^\top\bar{Y}+C^\top\bar{Z}+Q\Big(\bar{X}-\big(\lambda\hE[\bar{X}]+(1-\lambda)\bar{X}_0\big)\Big)\Big\}\ud t+\bar{Z}\ud W(t),\\
&\bar{X}(0)=\xi,\ \bar{Y}(T)=H\bar{X}(T),
\end{aligned}\right.\end{equation}
where $W$ stands for a generic Brownian motion on $(\Omega,\cF,\hP)$, and it is independent of $W_0,\wt{W}$. $\xi$ is a representative element of $\{\xi_i\}_{1\leq i\leq N_l}$.

To the end of the section, combined with \eqref{C6e29} and \eqref{G4}, replacing $\bar{m}_X$ by $\hE[\bar{X}]$, we can get the consistency condition system for open-loop strategy as follows.
\begin{equation}\left\{\begin{aligned}\label{G5}
  &\ud\bar{X}_0=\{A_0\bar{X}_0-B_0R_0^{-1}(B_0^\top Y_0+D_0^\top Z_0)+E_0^1\hE[\bar{X}]+F_0^1\hE[\bar{x}]\}\ud t\\
  &\qq\qq+\{C_0\bar{X}_0-D_0R_0^{-1}(B_0^\top Y_0+D_0^\top Z_0)+E_0^2\hE[\bar{X}]+F_0^2\hE[\bar{x}]\}\ud W_0(t), \\
  &\ud\bar{X}=\{A\bar{X}-BR^{-1}(B^\top \bar{Y}+D^\top \bar{Z})+E_1\hE[\bar{X}]\}\ud t+\{C\bar{X}-DR^{-1}(B^\top \bar{Y}+D^\top \bar{Z})+E_2\hE[\bar{X}]\}\ud W(t)\\
  &\ud\bar{x}=\{\tilde{A}\bar{x}-\tilde{B}\tilde{R}^{-1}(\tilde{B}^\top\bar{y}+\tilde{D}^\top\bar{z})+F_1\hE[\bar{x}]\}\ud t+\{\tilde{C}\bar{x}-\tilde{D}\tilde{R}^{-1}(\tilde{B}^\top\bar{y}+\tilde{D}^\top\bar{z})+F_2\hE[\bar{x}]\}\ud\widetilde{W}(t)\\
  &\ud K=\{\tilde{A}K+\tilde{B}\tilde{R}^{-1}\tilde{B}^\top p+\tilde{B}\tilde{R}^{-1}\tilde{D}^\top q\}\ud t+\{\tilde{C}K+\tilde{D}\tilde{R}^{-1}\tilde{B}^\top p+\tilde{D}\tilde{R}^{-1}\tilde{D}^\top q\}\ud\wt{W}(t)\\
  &\ud Y_0=-\{A_0^\top Y_0+C_0^\top Z_0+Q_0(\bar{X}_0-(\l_0\hE[\bar{X}]+(1-\l_0)\hE[\bar{x}]))+\tilde{Q}\tilde{\l}_1K\}\ud t+Z_0\ud W_0(t),\\
  &\ud\bar{Y}=-\Big\{A^\top\bar{Y}+C^\top\bar{Z}+Q\Big(\bar{X}-\big(\lambda\hE[\bar{X}]+(1-\lambda)\bar{X}_0\big)\Big)\Big\}\ud t+\bar{Z}\ud W(t),\\
  &\ud\bar{y}=-\Big\{\tilde{A}^\top\bar{y}+\tilde{C}^\top\bar{z}+ \tilde{Q}\Big(\bar{x}-\big(\tilde{\lambda}_1\bar{X}_0+\tilde{\lambda}_2 \hE[\bar{X}]+\tilde{\lambda}_3 \hE[\bar{x}]\big)\Big)\Big\}\ud t+\bar{z}\ud\wt{W}(t),\\
  &\ud p=-\{\tilde{A}^\top p+\tilde{C}^\top q+{F_0^1}^\top\hE[Y_0]+{F_0^2}^\top\hE[Z_0]+F_1^{\top}\hE[p]+F_2^{\top}\hE[q]+\tilde{Q}\wt{\l}_3\hE[K]\\
  &\qq\qq-(1-\l_0)Q_0(\bar{X}_0-(\l_0\hE[\bar{X}]+(1-\l_0)\hE[\bar{x}]))-\tilde{Q}K\}\ud t+q\ud\wt{W}(t),\\
  &\bar{X}_0(0)=\xi_0,\q \bar{X}(0)=\xi,\q \bar{x}(0)=\zeta,\q K(0)=0,\\
  &Y_0(T)=H_0\bar{X}_0(T),\q \bar{Y}(T)=H\bar{X}(T),\q \bar{y}(T)=\tilde{H}\bar{x}(T),\q p(T)=-\tilde{H}K(T),\\
\end{aligned}\right.\end{equation}

\section{The Consistency Condition System}
Under assumptions \textbf{(H1)}, \textbf{(H2)}, when $\tilde{R}(\cdot)$, $R_0(\cdot)$ and $R(\cdot)$ are always invertible, we get the consistency condition (CC) for OL strategy in section 3. In this section, we turn to verify the well-posedness of the CC equation.

For the simplicity of notation, denote
$\mathbb{X}^\top=(\bar{X}_0,\bar{X},\bar{x},K)$,
$\mathbb{Y}^\top=(Y_0,\bar{Y},\bar{y},p)$,
$\mathbb{Z}^\top=(Z_0,\bar{Z},\bar{z},q)$,
$\textbf{W}^\top=(W_0,W,\tilde{W},\tilde{W})$, and then the consistency condition system \eqref{G5} can be rewritten as
\begin{equation}\label{G16}
\left \{ \begin{aligned}
\ud \mathbb{X}=&\{\textbf{A}\mathbb{X}+\bar{\textbf{A}}\hE[\mathbb{X}]+\textbf{B}\mathbb{Y}+\textbf{E}\mathbb{Z}\}\ud t+\{\textbf{C}\mathbb{X}+\bar{\textbf{C}}\hE[\mathbb{X}]+\textbf{D}\mathbb{Y}+\textbf{F}\mathbb{Z}\}\circ\ud \textbf{W}(t)\\
\ud \mathbb{Y}=&-\{\textbf{A}^\top\mathbb{Y}+\textbf{A}_0^\top\hE[\mathbb{Y}]+\textbf{C}^\top\mathbb{Z}+\textbf{C}_0^\top\hE[\mathbb{Z}] +\textbf{Q}\mathbb{X}+\bar{\textbf{Q}}\hE[\mathbb{X}]\}\ud t+\mathbb{Z}\circ\ud \textbf{W}(t),\\
\mathbb{X}(0)=&\textbf{X}_0,\qq \mathbb{Y}(T)=\textbf{H}_0 \mathbb{X}(T),
\end{aligned} \right.
\end{equation}
where
\begin{equation*}\begin{aligned}
&\textbf{A}=\begin{pmatrix}\begin{smallmatrix}A_0&0&0&0\\ 0&A&0&0\\ 0&0&\tilde{A}&0\\ 0&0&0&\tilde{A}\end{smallmatrix}\end{pmatrix},\qq\bar{\textbf{A}}=\begin{pmatrix}\begin{smallmatrix}0&E_0^1&F_0^1&0\\ 0&E_1&0&0\\ 0&0&F_1&0\\ 0&0&0&0\end{smallmatrix}\end{pmatrix},&&\textbf{C}=\begin{pmatrix}\begin{smallmatrix}C_0&0&0&0\\ 0&C&0&0\\ 0&0&\tilde{C}&0\\ 0&0&0&\tilde{C}\end{smallmatrix}\end{pmatrix},\qq\bar{\textbf{C}}=\begin{pmatrix}\begin{smallmatrix}0&E_0^2&F_0^2&0\\ 0&E_2&0&0\\ 0&0&F_2&0\\ 0&0&0&0\end{smallmatrix}\end{pmatrix},\\
&\textbf{B}=\begin{pmatrix}\begin{smallmatrix}-B_0R_0^{-1}B_0^\top&0&0&0\\ 0&-BR^{-1}B^\top&0&0\\ 0&0&-\tilde{B}\tilde{R}^{-1}\tilde{B}^\top&0\\ 0&0&0&\tilde{B}\tilde{R}^{-1}\tilde{B}^\top\end{smallmatrix}\end{pmatrix},&&\textbf{D}=\begin{pmatrix}\begin{smallmatrix}-D_0R_0^{-1}B_0^\top&0&0&0\\ 0&-DR^{-1}B^\top&0&0\\ 0&0&-\tilde{D}\tilde{R}^{-1}\tilde{B}^\top&0\\0&0&0&\tilde{D}\tilde{R}^{-1}\tilde{B}^\top\end{smallmatrix}\end{pmatrix},\\
&\textbf{E}=\begin{pmatrix}\begin{smallmatrix}-B_0R_0^{-1}D_0^\top&0&0&0\\ 0&-BR^{-1}D^\top&0&0\\ 0&0&-\tilde{B}\tilde{R}^{-1}\tilde{D}^\top&0\\ 0&0&0&\tilde{B}\tilde{R}^{-1}\tilde{D}^\top\end{smallmatrix}\end{pmatrix}, &&\textbf{F}=\begin{pmatrix}\begin{smallmatrix}-D_0R_0^{-1}D_0^\top&0&0&0\\0&-DR^{-1}D^\top&0&0\\0&0&-\tilde{D}\tilde{R}^{-1}\tilde{D}^\top&0\\ 0&0&0&\tilde{D}\tilde{R}^{-1}\tilde{D}^\top\end{smallmatrix}\end{pmatrix},\\
&\textbf{A}_0=\begin{pmatrix}\begin{smallmatrix}0&0&0&{F_0^1}\\0&0&0&0\\ 0&0&0&0\\0&0&0&F_1\end{smallmatrix}\end{pmatrix},\qq\textbf{C}_0=\begin{pmatrix}\begin{smallmatrix}0&0&0&{F_0^2}\\ 0&0&0&0\\ 0&0&0&0\\0&0&0&F_2\end{smallmatrix}\end{pmatrix},&&\textbf{H}_0=\begin{pmatrix}\begin{smallmatrix}H_0&0&0&0\\ 0&H&0&0\\ 0&0&\tilde{H}&0\\ 0&0&0&-\tilde{H}\end{smallmatrix}\end{pmatrix},\qq\textbf{X}_0=\begin{pmatrix}\begin{smallmatrix}\xi_0\\ \xi\\ \zeta\\0\end{smallmatrix}\end{pmatrix},\\
&\textbf{Q}=\begin{pmatrix}\begin{smallmatrix}-Q_0&0&0&-\tilde{Q}\tilde{\lambda}_1\\ Q(1-\l)&-Q&0&0\\ \tilde{Q}\tilde{\lambda}_1&0&-\tilde{Q}&0\\ Q_0(1-\lambda_0)&0&0&\tilde{Q}\end{smallmatrix}\end{pmatrix},
&&\bar{\textbf{Q}}=\begin{pmatrix}\begin{smallmatrix}0&Q_0\lambda_0&Q_0(1-\lambda_0)&0\\ 0&Q\lambda&0&0\\ 0&\tilde{Q}\tilde{\lambda}_2&\tilde{Q}\tilde{\lambda}_3&0\\ 0&-Q_0\lambda_0(1-\lambda_0)&-Q_0(1-\lambda_0)^2&-\tilde{Q}\tilde{\lambda}_3\end{smallmatrix}\end{pmatrix}, \\
\end{aligned}\end{equation*}

\subsection{Decoupling for open-loop strategy}
Then, we turn to decouple the FBSDE \eqref{G16} by Riccati equation. Note that
$$\ud\hE[\mathbb{X}]=\Big[(\textbf{A}+\bar{\textbf{A}})\hE[\mathbb{X}]+\textbf{B}\hE[\mathbb{Y}]+\textbf{E}\hE[\mathbb{Z}]\Big]\ud t.$$
Hence,
\begin{equation*}\begin{aligned}
\ud\Big(\mathbb{X}-\hE[\mathbb{X}]\Big)=&\Big[\textbf{A}(\mathbb{X}-\hE[\mathbb{X}])+\textbf{B}(\mathbb{Y}-\hE[\mathbb{Y}]) +\textbf{E}(\mathbb{Z}-\hE[\mathbb{Z}])\Big]\ud t\\
&+[\textbf{C}(\mathbb{X}-\hE[\mathbb{X}])+(\textbf{C}+\bar{\textbf{C}})\hE[\mathbb{X}]+\textbf{D}\mathbb{Y}+\textbf{F}\mathbb{Z}]\circ\ud \textbf{W}(t).
\end{aligned}\end{equation*}
Now, we assume that
\begin{equation}\label{C6e14}
\mathbb{Y}(t)=P(t)\Big(\mathbb{X}(t)-\hE[\mathbb{X}(t)]\Big)+\Pi(t)\hE[\mathbb{X}(t)],\qq t\in[0,T],
\end{equation}
for some deterministic and differentiable functions $P(\cd)$ and $\Pi(\cd)$, taking values in $\cS^{4n}$, such that
$$P(T)=\textbf{H}_0,\qq\Pi(T)=\textbf{H}_0.$$ Then $$\hE[\mathbb{Y}(t)]=\Pi(t)\hE[\mathbb{X}(t)]$$ and $$\mathbb{Y}(t)-\hE[\mathbb{Y}(t)]=P(t) \Big(\mathbb{X}(t)-\hE[\mathbb{X}(t)]\Big).$$ Therefore,
\begin{equation}\begin{aligned}\label{C6e15}
\ud\mathbb{Y}&=\Big[\dot{P}(\mathbb{X}-\hE[\mathbb{X}])+\dot{\Pi}\hE[\mathbb{X}]\Big]\ud t+P\cd\ud\Big(\mathbb{X}-\hE[\mathbb{X}]\Big)+\Pi\cd\ud\hE[\mathbb{X}]\\
&=\bigg\{\dot{P}(\mathbb{X}-\hE[\mathbb{X}])+\dot{\Pi}\hE[\mathbb{X}]+P\Big[\textbf{A}(\mathbb{X}-\hE[\mathbb{X}]) +\textbf{B}(\mathbb{Y}-\hE[\mathbb{Y}])+\textbf{E}(\mathbb{Z}-\hE[\mathbb{Z}])\Big]\\
&\q+\Pi\Big[(\textbf{A}+\bar{\textbf{A}}) \hE[\mathbb{X}]+\textbf{B}\hE[\mathbb{Y}]+\textbf{E}\hE[\mathbb{Z}]\Big]\bigg\}\ud t\\
&\q+P\Big[\textbf{C}(\mathbb{X}-\hE[\mathbb{X}])+(\textbf{C}+\bar{\textbf{C}})\hE[\mathbb{X}]+\textbf{D}\mathbb{Y}+\textbf{F}\mathbb{Z}\Big]\circ\ud\textbf{W}(t).
\end{aligned}\end{equation}
Comparing the diffusion terms, we should have
\begin{equation}\begin{aligned}\label{C6e16}
\mathbb{Z}=(I-P\textbf{F})^{-1}P\Big[\textbf{C}(\mathbb{X}-\hE[\mathbb{X}])+(\textbf{C}+\bar{\textbf{C}})\hE[\mathbb{X}]+\textbf{D}\mathbb{Y}\Big].
\end{aligned}\end{equation}
Then $$\hE[\mathbb{Z}]=(I-P\textbf{F})^{-1}P\Big[(\textbf{C}+\bar{\textbf{C}})\hE[\mathbb{X}]+\textbf{D}\hE[\mathbb{Y}]\Big],$$ and $$\mathbb{Z}-\hE[\mathbb{Z}]=(I-P\textbf{F})^{-1}P\Big[\textbf{C}(\mathbb{X}-\hE[\mathbb{X}])+\textbf{D}(\mathbb{Y}-\hE[\mathbb{Y}])\Big].$$
Comparing the drift terms, we should have
\begin{equation}\begin{aligned}\label{C6e17}
0=&\dot{P}(\mathbb{X}-\hE[\mathbb{X}])+\dot{\Pi}\hE[\mathbb{X}]+P\Big[\textbf{A}(\mathbb{X}-\hE[\mathbb{X}]) +\textbf{B}(\mathbb{Y}-\hE[\mathbb{Y}])+\textbf{E}(\mathbb{Z}-\hE[\mathbb{Z}])\Big]\\
&+\Pi\Big[(\textbf{A}+\bar{\textbf{A}}) \hE[\mathbb{X}]+\textbf{B}\hE[\mathbb{Y}]+\textbf{E}\hE[\mathbb{Z}]\Big]\\ &+[\textbf{A}^\top\mathbb{Y}+\textbf{A}_0^\top\hE[\mathbb{Y}]+\textbf{C}^\top\mathbb{Z}+\textbf{C}_0^\top\hE[\mathbb{Z}] +\textbf{Q}\mathbb{X}+\bar{\textbf{Q}}\hE[\mathbb{X}]]\\
\ =&\bigg\{\dot{P}+P\textbf{A}+\textbf{A}^\top P+\textbf{C}^\top(I-P\textbf{F})^{-1}P\textbf{C}+\textbf{Q}+P\textbf{B}P\\
&+P\textbf{E}(I-P\textbf{F})^{-1}P(\textbf{C}+\textbf{D}P)\bigg\}\Big(\mathbb{X}-\hE[\mathbb{X}]\Big)\\
&+\bigg\{\dot{\Pi}+\Pi(\textbf{A}+\bar{\textbf{A}})+(\textbf{A}^\top+\textbf{A}_0^\top)\Pi+\Pi\textbf{B}\Pi+\textbf{Q}+\bar{\textbf{Q}}\\
&+(\Pi\textbf{E}+\textbf{C}^\top+\textbf{C}_0^\top)(I-P\textbf{F})^{-1}P(\textbf{C}+\bar{\textbf{C}}+\textbf{D}\Pi)\bigg\}\hE[\mathbb{X}].
\end{aligned}\end{equation}
Therefore, we should let $P(\cd)$ and $\Pi(\cd)$ be the solutions to the following Riccati equations, respectively:
\begin{equation}\left\{\begin{aligned}\label{C6e18}
&\dot{P}+P\textbf{A}+\textbf{A}^\top P+\textbf{C}^\top(I-P\textbf{F})^{-1}P\textbf{C}+\textbf{Q}+P\textbf{B}P\\
&\qq+P\textbf{E}(I-P\textbf{F})^{-1}P(\textbf{C}+\textbf{D}P)=0,\\
&P(T)=\textbf{H}_0,
\end{aligned}\right.\end{equation}
and
\begin{equation}\left\{\begin{aligned}\label{C6e19}
&\dot{\Pi}+\Pi(\textbf{A}+\bar{\textbf{A}})+(\textbf{A}^\top+\textbf{A}_0^\top)\Pi+\Pi\textbf{B}\Pi+\textbf{Q}+\bar{\textbf{Q}}\\
&\qq+(\Pi\textbf{E}+\textbf{C}^\top+\textbf{C}_0^\top)(I-P\textbf{F})^{-1}P(\textbf{C}+\bar{\textbf{C}}+\textbf{D}\Pi)=0,\\
&\Pi(T)=\textbf{H}_0,
\end{aligned}\right.\end{equation}

\subsection{Decoupling for the feedback strategy}
Except the pure open-loop method, we can also introducing the following Riccati equations to decouple the Hamiltonian systems first.

The Hamiltonian system of minor follower is
\begin{equation*}\left\{\begin{aligned}
&\ud\bar{x}_j=[\tilde{A}\bar{x}_j+\tilde{B}\bar{v}_j+F_1\bar{m}_x]\ud t+[\tilde{C}\bar{x}_j+\tilde{D}\bar{v}_j+F_2\bar{m}_x]\ud \wt{W}_j\\
&\ud\bar{y}_j=-[\tilde{A}^\top\bar{y}_j+\tilde{C}^\top\bar{z}_j+\tilde{Q}(\bar{x}_j-(\tilde{\l}_1X_0+\tilde{\l}_2\bar{m}_X+\tilde{\l}_3\bar{m}_x))]\ud t +\bar{z}_j\ud\wt{W}_j\\
&\bar{x}_j(0)=\zeta_j,\bar{y}_j(T)=\tilde{H}\bar{x}_j,
\end{aligned}\right.\end{equation*}
with the stationary condition
\begin{equation*}
\bar{v}_j=-\tilde{R}^{-1}(\tilde{B}^\top\bar{y}_j+\tilde{D}^\top\bar{z}_j).
\end{equation*}
Assume that $\bar y_j=P_1\bar x_j+\Phi_1$, and we can get the Riccati equations
\begin{equation*}\left\{\begin{aligned}
&\dot{P}_1+\tilde{A}^\top P_1+P_1\tilde{A}+\tilde{Q}-P_1\tilde{B}\tilde{R}^{-1}\tilde{B}^\top P_1+\widetilde{\cS}^\top\widetilde{\cR}^{-1}P_1\widetilde{\cS}=0\\
&P_1(T)=\tilde{H},
\end{aligned}\right.\end{equation*}
and
\begin{equation*}\left\{\begin{aligned}
&\ud\Phi_1=\Big(-\tilde{A}^\top \Phi_1+P_1\tilde{B}\tilde{R}^{-1}\tilde{B}^\top\Phi_1+\tilde{Q}(\tilde{\l}_1X_0 +\tilde{\l}_2\bar{m}_X+\tilde{\l}_3\bar{m}_x)-P_1F_1\bar{m}_x-\widetilde{\cS}^\top\widetilde{\cR}^{-1}\widetilde{f}\Big)\ud t\\
&\Phi_1(T)=0,
\end{aligned}\right.\end{equation*}
where
\begin{equation*}\left\{\begin{aligned}
&\widetilde{\cR}:=I+P_1\tilde{D}\tilde{R}^{-1}\tilde{D}^\top,\qq\widetilde{\cS}:=\tilde{C}-\tilde{D}\tilde{R}^{-1} \tilde{B}^\top P_1,\\
&\widetilde{f}:=P_1F_2\bar{m}_x -P_1\tilde{D}\tilde{R}^{-1}\tilde{B}^\top\Phi_1.
\end{aligned}\right.\end{equation*}
Note that
\begin{equation*}\textbf{minor follower:}\left\{\begin{aligned}
&\bar{y}_j=P_1\bar{x}_j+\Phi_1\\
&\bar{z}_j=\widetilde{\cR}^{-1}P_1\widetilde{\cS}\bar{x}_j+\widetilde{\cR}^{-1}\widetilde{f},
\end{aligned}\right.\end{equation*}
so the feedback is
\begin{equation*}\begin{aligned}
&\bar{v}_j=-\tilde{R}^{-1}\Big(\tilde{B}^\top P_1+\tilde{D}^\top\widetilde{\cR}^{-1}P_1\widetilde{\cS}\Big)\bar{x}_j- \tilde{R}^{-1}\tilde{B}^\top\Phi_1-\tilde{R}^{-1}\tilde{D}^\top\widetilde{\cR}^{-1}\widetilde{f}.
\end{aligned}\end{equation*}
The major leader ends up with the following Hamiltonian system
\begin{equation*}\left\{\begin{aligned}
&\ud \bar{X}_0=(A_0\bar{X}_0+B_0\bar{u}_0+E_0^1\bar{m}_X+F_0^1\bar{m}_x)\ud t+(C_0\bar{X}_0+D_0\bar{u}_0+E_0^2\bar{m}_X+F_0^2\bar{m}_x)\ud W_0,\\
&\ud \bar{m}_x=(\widetilde{\cA}\bar{m}_x+\widetilde{\cB}\Phi_1)\ud t+(\widetilde{\cC}\bar{m}_x+\widetilde{\cD}\Phi_1)\ud W_0,\\
&\ud \Phi_1=(\tilde{Q}\tilde{\l}_1\bar{X}_0+\hat{\cQ}\bar{m}_x +\hat{\cA} \Phi_1+\tilde{Q}\tilde{\l}_2\bar{m}_X)\ud t,\\
&\ud Y_0=-\bigg\{A_0^\top Y_0+C_0^\top Z_0+Q_0\Big(\bar{X}_0-(\l_0\bar{m}_X+(1-\l_0)\bar{m}_x)\Big) +\tilde{Q}\tilde{\l}_1y_2\bigg\}\ud t+Z_0\ud W_0,\\
&\ud y_1=-\bigg\{\widetilde{\cA}^\top y_1+\widetilde{\cC}^\top z_1+\hat{\cQ}^\top y_2+{F_0^1}^\top Y_0+{F_0^2}^\top Z_0-(1-\l_0)Q_0\Big(\bar{X}_0-(\l_0\bar{m}_X+(1-\l_0)\bar{m}_x)\Big)\bigg\}\ud t+z_1\ud W_0,\\
&\ud y_2=-(\widetilde{\cB}^\top y_1+\widetilde{\cD}^\top z_1+\hat{\cA}^\top y_2)\ud t,\\
&\bar{X}_0(0)=\xi_0,\bar{m}_x(0)=0,\Phi_1(T)=0,Y_0(T)=H_0\bar{X}_0(T),y_1(T)=0,y_2(0)=0,
\end{aligned}\right.\end{equation*}
with the stationary condition
\begin{equation*}
\bar{u}_0=-R_0^{-1}(B_0^\top Y_0+D_0^\top Z_0),
\end{equation*}where
\begin{equation*}\left\{\begin{aligned}
&\widetilde{\cA}:=\tilde{A}+F_1-\tilde{B}\tilde{R}^{-1}\big(\tilde{B}^\top P_1+\tilde{D}^\top \widetilde{\cR}^{-1}(P_1\widetilde{\cS}+P_1F_2)\big),&&\widetilde{\cB}:=(\tilde{B}\tilde{R}^{-1}\tilde{D}^\top\widetilde{\cR}^{-1}P_1\tilde{D}-\tilde{B})\tilde{R}^{-1} \tilde{B}^\top,\\
&\widetilde{\cC}:=\tilde{C}+F_2-\tilde{D}\tilde{R}^{-1}\big(\tilde{B}^\top P_1+\tilde{D}^\top \widetilde{\cR}^{-1}(P_1\widetilde{\cS}+P_1F_2)\big),&&\widetilde{\cD}:=(\tilde{D}\tilde{R}^{-1}\tilde{D}^\top\widetilde{\cR}^{-1}P_1\tilde{D}-\tilde{D}) \tilde{R}^{-1}\tilde{B}^\top,\\
&\hat{\cA}:=(\widetilde{\cS}^\top\widetilde{\cR}^{-1}P_1\tilde{D}+P_1\tilde{B})\tilde{R}^{-1}\tilde{B}^\top-\tilde{A}^\top, &&\hat{\cQ}:=\tilde{Q}\tilde{\l}_3-P_1F_1-\widetilde{\cS}^\top\widetilde{\cR}^{-1}P_1F_2.\\
\end{aligned}\right.\end{equation*}
Assume that $\rbjt{Y}=P_2\rbjt{X}+\Phi_2$, where
\begin{equation*}
\rbjt{X}=\left(\begin{smallmatrix}
           X_0 \\
           \bar{m}_x \\
           \Phi_1
         \end{smallmatrix}\right),\qquad
\rbjt{Y}=\left(\begin{smallmatrix}
           Y_0 \\
           y_1 \\
           y_2
         \end{smallmatrix}\right),\qquad
\rbjt{Z}=\left(\begin{smallmatrix}
           Z_0 \\
           z_1 \\
           0
         \end{smallmatrix}\right),\qquad
\rbjt{W}_0=\left(\begin{smallmatrix}
           W_0 \\
           W_0 \\
           0
         \end{smallmatrix}\right),
\end{equation*}and for simplicity, we rewrite the Hamiltonian system by
\begin{equation*}\left\{\begin{aligned}
&\ud\rbjt{X}=(L_{11}\rbjt{X}+L_{12}\rbjt{Y}+L_{13}\rbjt{Z}+f_1)\ud t+(L_{21}\rbjt{X}+L_{22}\rbjt{Y}+L_{23}\rbjt{Z}+f_2)\circ\ud \rbjt{W}_0,\\
&\ud\rbjt{Y}=(L_{31}\rbjt{X}+L_{32}\rbjt{Y}+L_{33}\rbjt{Z}+f_3)\ud t+\rbjt{Z}\circ\ud \rbjt{W}_0,\\
\end{aligned}\right.\end{equation*}
where
\begin{equation*}\begin{aligned}
&L_{11}=\begin{pmatrix}\begin{smallmatrix}A_0&F_0^1&0\\0&\tilde{\cA}&\tilde{\cB}\\ \tilde{Q}\tilde{\l}_1&\hat{\cQ}& \hat{\cA}\end{smallmatrix}\end{pmatrix},
&&L_{12}=\begin{pmatrix}\begin{smallmatrix}-B_0R_0^{-1}B_0^\top&0&0\\ 0&0&0\\ 0&0&0\end{smallmatrix}\end{pmatrix},
&&L_{13}=\begin{pmatrix}\begin{smallmatrix}-B_0R_0^{-1}D_0^\top&0&0\\ 0&0&0\\ 0&0&0\end{smallmatrix}\end{pmatrix},\\
&L_{21}=\begin{pmatrix}\begin{smallmatrix}C_0&F_0^2&0\\0&\tilde{\cC}&\tilde{\cD}\\0&0&0\end{smallmatrix}\end{pmatrix},
&&L_{22}=\begin{pmatrix}\begin{smallmatrix}-D_0R_0^{-1}B_0^\top&0&0\\ 0&0&0\\ 0&0&0\end{smallmatrix}\end{pmatrix},
&&L_{23}=\begin{pmatrix}\begin{smallmatrix}-D_0R_0^{-1}D_0^\top&0&0\\ 0&0&0\\ 0&0&0\end{smallmatrix}\end{pmatrix},\\
&L_{31}=\begin{pmatrix}\begin{smallmatrix}-Q_0&Q_0(1-\l_0)&0\\Q_0(1-\l_0)&-Q_0(1-\l_0)^2&0\\ 0&0&0\end{smallmatrix}\end{pmatrix},
&&L_{32}=\begin{pmatrix}\begin{smallmatrix}-A_0^\top&0&-\tilde{Q}\tilde{\l}_1\\-{F_0^1}^\top&-\tilde{\cA}^\top&-\hat{\cQ}^\top\\ 0&-\tilde{\cB}^\top&-\hat{\cA}^\top\end{smallmatrix}\end{pmatrix},
&&L_{33}=\begin{pmatrix}\begin{smallmatrix}-C_0^\top&0&0\\-{F_0^2}^\top&-\tilde{\cC}^\top&0\\ 0&-\tilde{\cD}^\top&0\end{smallmatrix}\end{pmatrix},\\
&f_{1}=\begin{pmatrix}\begin{smallmatrix}E_0^1\bar{m}_X\\ 0\\ \tilde{Q}\tilde{\l}_2\bar{m}_X\end{smallmatrix}\end{pmatrix},
&&f_{2}=\begin{pmatrix}\begin{smallmatrix}E_0^2\bar{m}_X\\ 0\\ 0\end{smallmatrix}\end{pmatrix},
&&f_{3}=\begin{pmatrix}\begin{smallmatrix}Q_0\l_0\bar{m}_X\\ -Q_0(1-\l_0)^2\bar{m}_X\\ 0\end{smallmatrix}\end{pmatrix},\\
\end{aligned}\end{equation*}
then we can get the following Riccati equations
\begin{equation*}\left\{\begin{aligned}
&\dot{P}_2+P_2L_{11}-L_{32}P_2-L_{31}+P_2L_{12}P_2\\
&\qq+(P_2L_{13}-L_{33})(I-P_2L_{23})^{-1}P_2(L_{21}+L_{22}P_2)=0\\
&P_2(T)=H_0,
\end{aligned}\right.\end{equation*}
and
\begin{equation*}\left\{\begin{aligned}
&\dot{\Phi}_2+\Big((P_2L_{12}-L_{32})+(P_2L_{13}-L_{33})(I-P_2L_{23})^{-1}P_2L_{22}\Big)\Phi_2\\
&\qq+P_2f_1+(P_2L_{13}-L_{33})(I-P_2L_{23})^{-1}f_2-f_3=0\\
&\Phi_2(T)=0.
\end{aligned}\right.\end{equation*}
So the feedback is
\begin{equation*}\begin{aligned}
\bar{u}_0=&-\left(\begin{smallmatrix}
                    R_0^{-1} & 0 & 0 \\
                    0 & 0 & 0 \\
                    0 & 0 & 0
                  \end{smallmatrix}\right)\left[
                  \left(\begin{smallmatrix}
                    B_0^\top & 0 & 0 \\
                    0 & 0 & 0 \\
                    0 & 0 & 0
                  \end{smallmatrix}\right)(P_2\rbjt{X}+\Phi_2)+
                  \left(\begin{smallmatrix}
                    D_0^\top & 0 & 0 \\
                    0 & 0 & 0 \\
                    0 & 0 & 0
                  \end{smallmatrix}\right)(I-P_2L_{23})^{-1}P_2[(L_{21}+L_{22}P_2)\rbjt{X}+L_{22}\Phi_2]\right]\\
         =&-\left(\begin{smallmatrix}
                    R_0^{-1} & 0 & 0 \\
                    0 & 0 & 0 \\
                    0 & 0 & 0
                  \end{smallmatrix}\right)\left[
                  \left(\begin{smallmatrix}
                    B_0^\top & 0 & 0 \\
                    0 & 0 & 0 \\
                    0 & 0 & 0
                  \end{smallmatrix}\right)\left(P_2+\left(\begin{smallmatrix}
                    D_0^\top & 0 & 0 \\
                    0 & 0 & 0 \\
                    0 & 0 & 0
                  \end{smallmatrix}\right)(I-P_2L_{23})^{-1}P_2(L_{21}+L_{22}P_2)\right)\rbjt{X}\right.\\
         &\left.+\left(\left(\begin{smallmatrix}
                    B_0^\top & 0 & 0 \\
                    0 & 0 & 0 \\
                    0 & 0 & 0
                  \end{smallmatrix}\right)+\left(\begin{smallmatrix}
                    D_0^\top & 0 & 0 \\
                    0 & 0 & 0 \\
                    0 & 0 & 0
                  \end{smallmatrix}\right)(I-P_2L_{23})^{-1}P_2L_{22}\right)\Phi_2\right]\\
\end{aligned}\end{equation*}
At last, the Hamiltonian system of minor leader is
\begin{equation*}\left\{\begin{aligned}
&\ud\bar{X}_i=[A\bar{X}_j+B\bar{u}_i+E_1\bar{m}_X]\ud t+[C\bar{X}_i+D\bar{u}_i+E_2\bar{m}_X]\ud W_i\\
&\ud\bar{Y}_i=-[A^\top\bar{Y}_i+C^\top\bar{Z}_i+Q(\bar{X}_i-(\l\bar{m}_X+(1-\l)\bar{X}_0))]\ud t +\bar{Z}_i\ud W_i\\
&\bar{X}_i(0)=\xi_i,\bar{Y}_i(T)=H\bar{X}_i,
\end{aligned}\right.\end{equation*}
with the stationary condition
\begin{equation*}
\bar{u}_i=-R^{-1}(B^\top\bar{Y}_i+D^\top\bar{Z}_i).
\end{equation*}
Assume that $\bar{Y}_i=P_3\bar{X}_i+\Phi_3$, and we can get the Riccati equations
\begin{equation*}\left\{\begin{aligned}
&\dot{P}_3+A^\top P_3+P_3A+Q-P_3BR^{-1}B^\top P_3+\cS^\top\cR^{-1}P_1\cS=0\\
&P_3(T)=H,
\end{aligned}\right.\end{equation*}
and
\begin{equation*}\left\{\begin{aligned}
&\ud\Phi_3=\Big(-A^\top \Phi_3+P_3BR^{-1}B^\top\Phi_3+Q(\l\bar{m}_X+(1-\l)\bar{X}_0)-P_3E_1\bar{m}_X-\cS^\top\cR^{-1}f\Big)\ud t\\
&\Phi_3(T)=0,
\end{aligned}\right.\end{equation*}
where
\begin{equation*}\left\{\begin{aligned}
&\cR:=I+P_3DR^{-1}D^\top,\qq \cS:=C-DR^{-1}B^\top P_3,\\
&f:=P_3E_2\bar{m}_X -P_3DR^{-1}B^\top\Phi_3.
\end{aligned}\right.\end{equation*}
Note that
\begin{equation*}\textbf{minor leader:}\left\{\begin{aligned}
&\bar{Y}_i=P_3\bar{X}_i+\Phi_3\\
&\bar{Z}_i=\cR^{-1}P_3\cS\bar{X}_i+\cR^{-1}f,
\end{aligned}\right.\end{equation*}
so the feedback is
\begin{equation*}\begin{aligned}
&\bar{u}_i=-R^{-1}\Big(B^\top P_3+D^\top\cR^{-1}P_3\cS\Big)\bar{X}_i-R^{-1}B^\top\Phi_3-R^{-1}D^\top\cR^{-1}f.
\end{aligned}\end{equation*}

\section{$\e$-Nash Equilibrium Analysis}
In above sections, we obtained the decentralized open-loop strategy of the mixed S-MM-MFG through the consistency condition system. Now we turn to verify that it is the SNC approximate equilibrium (i.e. $\e$-Stackelberg-Nash-Cournot equilibrium). In order to ensure the solvability of the open-loop strategy, we assume the Riccati equation \eqref{C6e18} and \eqref{C6e19} admits a unique solution. At the beginning, we first present the definition of $\e$-SNC equilibrium.
\begin{definition}
  A set of controls $(\bar{u}_0,\bar{u}_1,\ldots,\bar{u}_{N_l},\bar{v}_1,\ldots,\bar{v}_{N_f})\in\cU_0^d\times\cU^d\times\cV^d$, for $(1+N_l+N_f)$ agents is called to satisfy an $\e$-SNC equilibrium with respect to the costs $(\cJ_0,\cJ_1^l,\ldots,\cJ_{N_l}^l,\cJ_1^f,\cJ_{N_f}^f)$, if there exists $\e=\e(N)\geq 0$ ($N=\min\{N_l,N_f\}$), $\lim_{N\rightarrow\infty}\e(N)=0$ such that for any fixed $i=1,2,\ldots,N_l$, $j=1,2,\ldots,N_f$, we have
  \begin{equation}\label{C6e33}\left\{\begin{aligned}
    &\cJ_0(\bar{u}_0,\bar{u},\bar{v})\leq\cJ_0(u_0,\bar{u},\bar{v})+\e,\\
    &\cJ_i^l(\bar{u}_0,\bar{u}_i,\bar{u}_{-i})\leq\cJ_i^l(\bar{u}_0,u_i,\bar{u}_{-i})+\e,\\
    &\cJ_j^f(\bar{u}_0,\bar{u},\bar{v}_j,\bar{v}_{-j})\leq\cJ_j^f(\bar{u}_0,\bar{u},v_j,\bar{v}_{-j})+\e,
  \end{aligned}\right.\end{equation}
  when any alternative control $(u_0,u_i,v_j)\in\cU_0^d\times\cU_i^d\times\cV_j^d$ is applied by $(\cA_0,\cA_i,\cB_j)$.
\end{definition}
At first, we present the main result in this section and its proof will be given later.
\begin{theorem}\label{thm5.1}
Under assumptions {\rm(H1)-(H2)}, and if the conditions in the Theorem 3.1, Theorem 3.2, Theorem 3.3 hold, then $(\bar{u}_0,\bar{u}_i,\bar{v}_j)$ is an $\e$-Nash equilibrium of mixed S-MM-MFG for major leader agent $\cA_0$, each minor leader agent $\cA_i^l$, $i=1,2,\ldots,N_l$, and each follower agent $\cA_j^f$, $j=1,2,\ldots,N_f$. And $(\bar{u}_0,\bar{u}_i,\bar{v}_j)$ is given by
\begin{equation}\label{C6t2}\left\{\begin{aligned}
\bar{u}_0(t)=&-R_0^{-1}(t)(B_0(t)^\top Y_0(t)+D_0(t)^\top Z_0(t)),\\
\bar{u}_i(t)=&-R^{-1}(t)(B(t)^\top \bar{Y}_i(t)+D(t)^\top \bar{Z}_i(t)),\\
\bar{v}_j(t)=&-\tilde{R}^{-1}(t)(\tilde{B}(t)^\top \bar{y}_j(t)+\tilde{D}(t)^\top \bar{z}_j(t)),
\end{aligned}\right.\end{equation}
for $(Y_0,Z_0)$, $(\bar{Y}_i,\bar{Z}_i)$, $(\bar{y}_j,\bar{z}_j)$ solved by \eqref{G5}.
\end{theorem}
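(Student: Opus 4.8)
The plan is to follow the standard three-step route for verifying an $\e$-Nash property in mean-field games, adapted to the frozen hierarchical structure built into the consistency-condition system \eqref{G5}. The key observation is that in the decentralized implementation every agent's control is the fixed open-loop process given in \eqref{C6t2} (equivalently, a feedback of its own state and of the \emph{deterministic} limiting mean-field pair $(\bar m_X(\cd),\bar m_x(\cd))$ together with the fixed process $\bar X_0(\cd)$ produced by \eqref{G5}); hence a unilateral deviation by any single agent does \emph{not} propagate through the Stackelberg hierarchy, and the verification reduces to three essentially independent perturbation estimates plus the optimality statements of Theorems~\ref{C6l1}, \ref{C6l2}, \ref{C6l3}.

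First I would establish, under $(\bar u_0,\bar{\textbf{u}},\bar{\textbf{v}})$ and using \textbf{(H1)}--\textbf{(H2)}, the invertibility of $R_0,R,\tilde R$, the assumed solvability of the Riccati equations \eqref{C6e18}--\eqref{C6e19} (to put the realized controls in feedback form with bounded gains), and Gronwall's inequality, the uniform bound $\sup_i\hE[\sup_t|X_i(t)|^2]+\sup_j\hE[\sup_t|x_j(t)|^2]+\hE[\sup_t|X_0(t)|^2]\le C$ with $C$ independent of $N_l,N_f$. Since the $\{\xi_i\}$, $\{\zeta_j\}$ and the driving Brownian motions are mutually independent with the centering in \textbf{(H2)}, the law of large numbers gives $\sup_t\hE|X^{(N_l)}(t)-\bar m_X(t)|^2=O(N_l^{-1})$ and $\sup_t\hE|x^{(N_f)}(t)-\bar m_x(t)|^2=O(N_f^{-1})$; a further Gronwall argument then yields $\sup_t\hE|X_0(t)-\bar X_0(t)|^2$, $\sup_t\hE|X_i(t)-\bar X_i(t)|^2$, $\sup_t\hE|x_j(t)-\bar x_j(t)|^2$ all of order $O(N^{-1})$, where $N=\min\{N_l,N_f\}$. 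Combining these with the Lipschitz dependence of the quadratic functionals \eqref{C6e4}--\eqref{C6e6} on their mean-field arguments gives the cost comparison $|\cJ_0(\bar u_0,\bar{\textbf{u}},\bar{\textbf{v}})-J_0|+|\cJ_i^l(\bar u_0,\bar u_i,\bar{\textbf{u}}_{-i})-J_i^l|+|\cJ_j^f(\bar u_0,\bar{\textbf{u}},\bar v_j,\bar{\textbf{v}}_{-j})-J_j^f|=O(N^{-1/2})$, with the $J$'s the limiting costs solved in Section~3.

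Next I would treat deviations. Fix a follower $\cA_j^f$ and an admissible $v_j\in\cV_j^d[0,T]$; one may assume $\cJ_j^f(\bar u_0,\bar{\textbf{u}},v_j,\bar{\textbf{v}}_{-j})\le\cJ_j^f(\bar u_0,\bar{\textbf{u}},\bar v_j,\bar{\textbf{v}}_{-j})$, which with the bounds of the previous step produces an a priori estimate $\hE\int_0^T|v_j|^2\,\ud t\le C$ uniform in $N$. Replacing $\bar v_j$ by $v_j$ perturbs only one of the $N_f$ summands defining $x^{(N_f)}$, hence changes the realized follower average (and, by Gronwall, all other realized states) by $O(N_f^{-1})$ in $L^2$, so the other agents' costs move by $O(N_f^{-1})$ and $\cJ_j^f(\bar u_0,\bar{\textbf{u}},v_j,\bar{\textbf{v}}_{-j})=J_j^f(\xi_0,\zeta_j,\bar m_X,\bar m_x;\bar u_0,v_j)+O(N^{-1/2})$; since $\bar v_j$ is optimal for \textbf{Problem (OL1)} (Theorem~\ref{C6l1}), chaining with the cost comparison gives $\cJ_j^f(\bar u_0,\bar{\textbf{u}},\bar v_j,\bar{\textbf{v}}_{-j})\le\cJ_j^f(\bar u_0,\bar{\textbf{u}},v_j,\bar{\textbf{v}}_{-j})+O(N^{-1/2})$. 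For a minor leader $\cA_i^l$ the argument is identical: a deviation $u_i$ perturbs $X^{(N_l)}$, and thereby the frozen-$\bar u_0$ realized state $X_0$ entering \eqref{C6e5}, only at order $O(N_l^{-1})$, and $\bar u_i$ is optimal for \textbf{Problem (OL3)} (Theorem~\ref{C6l3}). For the major leader, a deviation $u_0$ changes neither $X^{(N_l)}$ nor $x^{(N_f)}$ nor the fixed controls $\bar u_i,\bar v_j$, so $\cJ_0(u_0,\bar{\textbf{u}},\bar{\textbf{v}})=J_0(\xi_0,\bar m_X,\bar m_x;u_0)+O(N^{-1/2})$, and optimality of $\bar u_0$ for \textbf{Problem (OL2)} (Theorem~\ref{C6l2}) closes the estimate. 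Taking $\e=\e(N)=O(N^{-1/2})\to 0$ yields \eqref{C6e33}; formula \eqref{C6t2} is just the stationarity conditions \eqref{C6e24}, \eqref{C6e22}, \eqref{C6e10} read off the solution of \eqref{G5}.

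The main obstacle is the first (estimation) step: one must propagate the $O(N^{-1/2})$ mean-field error through the strongly \emph{coupled} FBSDE \eqref{G5}, equivalently through the realized Hamiltonian systems, while keeping all Gronwall constants uniform in $N_l,N_f$; this is where the Riccati decoupling of Section~4.1--4.2 is needed, since without it the realized feedback gains are not manifestly bounded, and where particular care is required for the realized major-leader state $X_0$ because it appears in the minor leaders' running cost \eqref{C6e5} and so its $O(N^{-1/2})$ fluctuation must be controlled separately. A secondary delicate point is the a priori energy bound on an arbitrary deviating control, without which the $O(N^{-1})$ single-agent perturbation cannot be converted into a uniform $\e(N)$.
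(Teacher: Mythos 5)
Your proposal is correct and follows essentially the same route as the paper: uniform second-moment bounds via BDG and Gronwall (Lemma \ref{l5.1}), the $O(N^{-1/2})$ approximation of empirical averages by the frozen mean-field terms (Lemma \ref{l5.2}), the resulting $O(N^{-1/2})$ cost comparisons (Lemmas \ref{l5.3}, \ref{l5.4}, \ref{l5.6}, \ref{l5.8}), the a priori energy bound on cost-improving deviations via the quadratic representation of the cost functional (Proposition \ref{prop5.1}), the observation that a single deviation shifts the empirical average by only $O(1/N)$ (Lemmas \ref{l5.5}, \ref{l5.7}), and finally chaining with the limiting optimality from Theorems \ref{C6l1}--\ref{C6l3}. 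Your explicit remark that the realized states under the true empirical averages must themselves be compared with the limiting states (a point the paper passes over silently in writing \eqref{N3}) is a welcome extra precaution, but it does not change the structure of the argument.
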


For major leader $\cA_0$, minor leaders $\cA_i^l$ and followers $\cA_j^f$, the decentralized states $\bar{X}_0$, $\bar{X}_i$ and $\bar{Y}_j$ are given respectively by
\begin{equation}\left\{\begin{aligned}\label{N1}
\ud \bar{X}_0=&[A_0\bar{X}_0-B_0 R_0^{-1}(B_0^\top Y_0+D_0^\top Z_0)+E_0^1 \bar{X}^{(N_l)}+F_0^1 \bar{x}^{(N_f)}]\ud t\\
&+[C_0\bar{X}_0-D_0 R_0^{-1}(B_0^\top Y_0+D_0^\top Z_0)+E_0^2 \bar{X}^{(N_l)}+F_0^2 \bar{x}^{(N_f)}]\ud W_0, \\
\ud \bar{X}_i=&[A\bar{X}_i-BR^{-1}(B^\top \bar{Y}_i+D^\top\bar{Z}_i)+E_1 \bar{X}^{(N_l)}]\ud t+[C\bar{X}_i-DR^{-1}(B^\top \bar{Y}_i+D^\top\bar{Z}_i)+E_2 \bar{X}^{(N_l)}]\ud W_i\\
\ud\bar{x}_j=&[\tilde{A}\bar{x}_j-\tilde{B}\tilde{R}^{-1}(\tilde{B}^\top\bar{y}_j+\tilde{D}^\top\bar{z}_j)+F_1\bar{x}^{(N_f)}]\ud t+[\tilde{C}\bar{x}_j-\tilde{D}\tilde{R}^{-1}(\tilde{B}^\top\bar{y}_j+\tilde{D}^\top\bar{z}_j)+F_2\bar{x}^{(N_f)}]\ud\widetilde{W}_j\\
\bar{X}_0(0)=&\xi_0,\qq \bar{X}_i(0)=\xi_i,\qq \bar{x}_j(0)=\z_j,
\end{aligned}\right.\end{equation}
where the processes $(Y_0,Z_0)$, $(\bar{Y}_i,\bar{Z}_i)$, $(\bar{y}_j,\bar{z}_j)$ solved by \eqref{G5}. Let us first present following several lemmas and for the simplicity of notation, we denote the inner product $\langle\ \cd\ ,\ \cd\ \rangle=|\ \cd\ |^2$.

\begin{lemma}\label{l5.1}
Under assumptions {\rm(H1)-(H2)}, and if the conditions in the Theorem \ref{C6l1}, Theorem \ref{C6l2}, Theorem \ref{C6l3} hold, then there exists a constant $M$ independent of $N_l$ and $N_f$, such that
$$\sup_{0\leq i\leq N_l}\hE\Big[\sup_{0\leq t\leq T}\big|\bar{X}_i(t)\big|^2\Big]<M,$$
$$\sup_{1\leq j\leq N_f}\hE\Big[\sup_{0\leq t\leq T}\big|\bar{x}_j(t)\big|^2\Big]<M.$$
\end{lemma}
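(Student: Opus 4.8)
The plan is to close a self‑consistent Gronwall estimate on the interacting particle system \eqref{N1}, after first bounding, uniformly in the agent index and in $N_l,N_f$, the control processes it is driven by. I would argue as follows: under the hypotheses of the lemma (in particular the solvability of the Riccati equations \eqref{C6e18}, \eqref{C6e19}), the consistency‑condition system \eqref{G5} has a unique adapted solution, and the component‑wise feedback decoupling of Section 4.2 (alternatively the block decoupling \eqref{C6e14}, \eqref{C6e16}) expresses the adjoint pair $(\bar{Y}_i,\bar{Z}_i)$ of a generic minor leader, and $(\bar{y}_j,\bar{z}_j)$ of a generic follower, as affine functions of the corresponding ``consistency‑condition state'', with deterministic gain matrices bounded on $[0,T]$ (the Riccati solutions being continuous there) and with offset/forcing terms built only from the common objects $\hE[\bar{X}]$, $\hE[\bar{x}]$, $\bar{X}_0$, $\Phi_1$, $\Phi_3$, none of which depends on the agent index. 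Hence each minor leader's (resp.\ follower's) consistency‑condition state solves one and the same closed linear SDE, forced by the initial datum $\xi_i$ (resp.\ $\zeta_j$) with $\sup_i\hE|\xi_i|^2\le c_0$ (resp.\ $\sup_j\hE|\zeta_j|^2\le c_0$). The standard moment estimate for linear SDEs then produces a constant $M_1$, independent of $N_l,N_f$, with $\sup_i\hE\int_0^T(|\bar{Y}_i|^2+|\bar{Z}_i|^2)\ud t+\sup_j\hE\int_0^T(|\bar{y}_j|^2+|\bar{z}_j|^2)\ud t+\hE\int_0^T(|Y_0|^2+|Z_0|^2)\ud t\le M_1$, and then, by \eqref{C6t2}, $\sup_i\hE\int_0^T|\bar{u}_i|^2\ud t+\sup_j\hE\int_0^T|\bar{v}_j|^2\ud t\le M_2$.

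\textbf{Step 2 (estimate of the empirical averages).} Next I would apply It\^o's formula to $|\bar{X}_i(\cd)|^2$ in \eqref{N1}, together with the Burkholder--Davis--Gundy inequality and \textbf{(H1)}, to get, for each $i$ and $t\in[0,T]$,
\[
\hE\Big[\sup_{s\le t}|\bar{X}_i(s)|^2\Big]\le C\Big(\hE|\xi_i|^2+\hE\!\int_0^T\!|\bar{u}_i|^2\ud s+\int_0^t\hE\Big[\sup_{r\le s}|\bar{X}_i(r)|^2\Big]\ud s+\int_0^t\hE\Big[\sup_{r\le s}|\bar{X}^{(N_l)}(r)|^2\Big]\ud s\Big).
\]
Averaging over $i=1,\dots,N_l$ and using $|\bar{X}^{(N_l)}(r)|^2\le\frac1{N_l}\sum_k|\bar{X}_k(r)|^2$, the function $\Phi_l(t):=\frac1{N_l}\sum_{i=1}^{N_l}\hE[\sup_{s\le t}|\bar{X}_i(s)|^2]$ satisfies $\Phi_l(t)\le C(c_0+M_2)+C\int_0^t\Phi_l(s)\ud s$, so Gronwall's inequality yields $\Phi_l(T)\le C(c_0+M_2)e^{CT}=:M_3$, a bound independent of $N_l$; in particular $\hE[\sup_{t\le T}|\bar{X}^{(N_l)}(t)|^2]\le M_3$.

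\textbf{Step 3 (conclusion).} Inserting $\hE[\sup_{r\le s}|\bar{X}^{(N_l)}(r)|^2]\le M_3$ back into the individual estimate of Step 2 and applying Gronwall once more gives $\hE[\sup_{t\le T}|\bar{X}_i(t)|^2]\le C(c_0+M_2+TM_3)e^{CT}$ for every $i$ and every $N_l$. The follower block of \eqref{N1} is decoupled from the leaders, so the identical three steps (Step 1 already supplying the uniform $L^2$‑bound on $\bar{v}_j$) give $\sup_j\hE[\sup_{t\le T}|\bar{x}_j(t)|^2]<\infty$ with a constant independent of $N_f$; taking $M$ to be the larger of the two constants proves the claim, and a one‑line estimate of the same type also bounds $\hE[\sup_{t\le T}|\bar{X}_0(t)|^2]$ (which will be needed later).

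The step I expect to be the main obstacle is Step 1: one must verify that the decoupling of \eqref{G5} genuinely yields coefficient matrices bounded on $[0,T]$ and, crucially, that the affine representation of each agent's adjoint carries an index‑independent forcing term, so that the consistency‑condition states of all $N_l$ minor leaders (and of all $N_f$ followers) solve a single linear SDE with uniformly $L^2$‑bounded initial data; without this uniformity the constants $M_1,M_2$ could grow with $N_l$ or $N_f$ and the Gronwall closure in Steps 2--3 would break down. Once that uniformity is secured, Steps 2--3 are routine a priori estimates.
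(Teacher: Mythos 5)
Your proposal is correct and follows essentially the same route as the paper: uniform (index- and $N$-independent) $L^2$ bounds on the adjoint processes and controls coming from the consistency-condition system, then a Gronwall estimate on the empirical average (the paper phrases it as $\hE[\sup_t\sum_i|\bar{X}_i|^2]=O(N_l)$, which is your $\Phi_l(T)\le M_3$), followed by substituting back into the individual estimate and applying Gronwall once more. The only difference is that your Step 1 spells out why the adjoint/control bounds are uniform in the agent index and in $N_l,N_f$, a point the paper absorbs silently into its constant $M$ after invoking the unique solvability of the FBSDEs.
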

\begin{proof}
From Theorem \ref{C6l1}, Theorem \ref{C6l2}, Theorem \ref{C6l3}, the FBSDEs \eqref{C6e11}, \eqref{C6e13} and \eqref{C6e23} have a unique solution $(\bar{X}_0,Y_0,Z_0)\in L^2_{\cF^0}(0,T;\hR^{3n})$, $(\bar{X}_i,\bar{Y}_i,\bar{Z}_i)\in L^2_{\cF^i}(0,T;\hR^{3n})$ and $(\bar{x}_j,\bar{y}_j,\bar{z}_j)\in L^2_{\cG^j}(0,T;\hR^{3n})$, $1\leq i\leq N_l$, $1\leq j\leq N_f$. Thus, SDEs system \eqref{N1} has also a unique solution
$$(\bar{X}_0,\bar{X}_1,\ldots,\bar{X}_{N_l},\bar{x}_1,\ldots,\bar{x}_{N_f})\in L^2_{\cF}(0,T;\hR^n)\times L^2_{\cF}(0,T;\hR^n)\times\ldots\times L^2_{\cF}(0,T;\hR^n).$$
From \eqref{N1}, by using Burkholder-Davis-Gundy (BDG) inequality, there exists a constant $M$, independent of $N_l$ and $N_f$, such that for any $t\in[0,T]$,
\begin{equation*}\begin{aligned}
\hE\Big[\sup_{0\leq s\leq t}\big|\bar{X}_0(s)\big|^2\Big]\leq& M+M\hE\Big[\int_{0}^{t}\big|\bar{X}_0(s)\big|^2+\big|\bar{X}^{(N_l)}(s)\big|^2+\big|\bar{x}^{(N_f)}(s)\big|^2\ud s\Big]\\
\leq&M+M\hE\Big[\int_{0}^{t}\big|\bar{X}_0(s)\big|^2+\frac{1}{N_l}\sum_{i=1}^{N_l}\big|\bar{X}_i(s)\big|^2+\frac{1}{N_f}\sum_{j=1}^{N_f}\big|\bar{x}_j(s)\big|^2\ud s\Big]
\end{aligned}\end{equation*}
and by Gronwall's inequality, we obtain
\begin{equation}\label{sec5:1}
\hE\Big[\sup_{0\leq s\leq t}\big|\bar{X}_0(s)\big|^2\Big]\leq M+M\hE\Big[\int_{0}^{t}\frac{1}{N_l}\sum_{i=1}^{N_l}\big|\bar{X}_i(s)\big|^2+\frac{1}{N_f}\sum_{j=1}^{N_f}\big|\bar{x}_j(s)\big|^2\ud s\Big].
\end{equation}
Similarly, we have
\begin{equation}\label{sec5:2}
\hE\Big[\sup_{0\leq s\leq t}\big|\bar{X}_i(s)\big|^2\Big]\leq M+M\hE\Big[\int_{0}^{t}\big|\bar{X}_i(s)\big|^2+\frac{1}{N_l}\sum_{i=1}^{N_l}\big|\bar{X}_i(s)\big|^2\ud s\Big],\qq 1\leq i\leq N_l,
\end{equation}
and
\begin{equation}\label{sec5:3}
\hE\Big[\sup_{0\leq s\leq t}\big|\bar{x}_j(s)\big|^2\Big]\leq M+M\hE\Big[\int_{0}^{t}\big|\bar{x}_j(s)\big|^2+\frac{1}{N_f}\sum_{j=1}^{N_f}\big|\bar{x}_j(s)\big|^2\ud s\Big],\qq 1\leq j\leq N_f.
\end{equation}
Thus
\begin{equation*}
\hE\Big[\sup_{0\leq s\leq t}\sum_{i=1}^{N_l}\big|\bar{X}_i(s)\big|^2\Big]\leq \hE\Big[\sum_{i=1}^{N_l}\sup_{0\leq s\leq t}\big|\bar{X}_i(s)\big|^2\Big]\leq M N_l+2M\hE\Big[\int_{0}^{t}\sum_{i=1}^{N_l}\big|\bar{X}_i(s)\big|^2\Big],
\end{equation*}
and
\begin{equation*}
\hE\Big[\sup_{0\leq s\leq t}\sum_{j=1}^{N_f}\big|\bar{x}_j(s)\big|^2\Big]\leq \hE\Big[\sum_{j=1}^{N_f}\sup_{0\leq s\leq t}\big|\bar{x}_j(s)\big|^2\Big]\leq M N_f+2M\hE\Big[\int_{0}^{t}\sum_{j=1}^{N_f}\big|\bar{x}_j(s)\big|^2\Big].
\end{equation*}
By Gronwall's inequality, it follows that $\hE\Big[\sup_{0\leq s\leq t}\sum_{i=1}^{N_l}\big|\bar{X}_i(s)\big|^2\Big]=O(N_l)$, and $\hE\Big[\sup_{0\leq s\leq t}\sum_{j=1}^{N_f}\big|\bar{x}_j(s)\big|^2\Big]=O(N_f)$. Then, substituting this estimate to \eqref{sec5:2} and \eqref{sec5:3} and Gronwall's inequality yields $\hE\Big[\sup_{0\leq s\leq t}\big|\bar{X}_i(s)\big|^2\Big]\leq M$, and $\hE\Big[\sup_{0\leq s\leq t}\big|\bar{x}_j(s)\big|^2\Big]\leq M$. By applying this estimate to \eqref{sec5:1}, we get $\hE\Big[\sup_{0\leq s\leq t}\big|\bar{X}_0(s)\big|^2\Big]\leq M$.
\end{proof}

Now, we recall that
\begin{equation*}
\bar{X}^{(N_l)}(t)=\frac{1}{N_l}\sum_{i=1}^{N_l}\bar{X}_i(t),\qq\mathrm{and}\qq\bar{x}^{(N_f)}(t)=\frac{1}{N_f}\sum_{j=1}^{N_f}\bar{x}_j(t),
\end{equation*}
then we have
\begin{lemma}\label{l5.2}
Under assumptions {\rm(H1)-(H2)}, and if the conditions in the Theorem \ref{C6l1}, Theorem \ref{C6l2}, Theorem \ref{C6l3} hold, then there exists a constant $M$ independent of $N_l$ and $N_f$, such that
$$\hE\Big[\sup_{0\leq t\leq T}\big|\bar{X}^{(N_l)}(t)-\bar{m}_X(t)\big|^2\Big]\leq \frac{M}{N_l},$$
$$\hE\Big[\sup_{0\leq t\leq T}\big|\bar{x}^{(N_f)}(t)-\bar{m}_x(t)\big|^2\Big]\leq \frac{M}{N_f}.$$
\end{lemma}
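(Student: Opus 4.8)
The plan is to run the classical mean-field coupling argument. The key structural observation is that in the realized system \eqref{N1} the adjoint processes $(\bar Y_i,\bar Z_i)$ and $(\bar y_j,\bar z_j)$ are the \emph{fixed} processes produced by the consistency system \eqref{G5}, so the only discrepancy between the $N$-agent dynamics and the limit dynamics sits in the mean-field coupling terms $E_1\bar X^{(N_l)}$ versus $E_1\bar m_X$ (resp.\ $F_1\bar x^{(N_f)}$ versus $F_1\bar m_x$) and their diffusion analogues with $E_2,F_2$. Since the $\bar X_i$-equations never see $\bar x^{(N_f)}$ and the $\bar x_j$-equations never see $\bar X^{(N_l)}$, the two estimates decouple and are obtained by the same scheme; I describe the one for $\bar X^{(N_l)}-\bar m_X$, the follower estimate being verbatim with $(\tilde A,\tilde C,F_1,F_2,\widetilde W_j,\zeta_j,\bar m_x)$ in place of $(A,C,E_1,E_2,W_i,\xi_i,\bar m_X)$.

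\textbf{Independent copies and the law-of-large-numbers term.} Let $\hat X_i$ solve the limit state equation obtained from the $\bar X_i$-line of \eqref{N1} by replacing $\bar X^{(N_l)}$ with $\bar m_X$, keeping the same data $(\xi_i,W_i,\bar Y_i,\bar Z_i)$. Since $(\xi_i,W_i)_{i\ge1}$ are i.i.d.\ and independent of $\cF^0_T$ while $\bar m_X$ and $\bar X_0$ are $\cF^0$-adapted, the $\{\hat X_i\}$ are i.i.d.\ conditionally on $\cF^0$, with $\hE[\hat X_i(t)\mid\cF^0_t]=\bar m_X(t)$ by the consistency condition of Steps 3--4; moreover $\sup_i\hE[\sup_t|\hat X_i|^2]\le M$ with $M$ independent of $N_l$, exactly as in Lemma \ref{l5.1} (indeed more easily, the coupling term now being frozen). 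Writing $\eta:=\frac1{N_l}\sum_i\hat X_i-\bar m_X$, conditional independence and the zero conditional mean kill the cross terms, so $\hE[|\eta(t)|^2]=\frac1{N_l^2}\sum_i\hE[|\hat X_i(t)-\bar m_X(t)|^2]\le M/N_l$; writing the SDE solved by $\eta$, whose martingale part has quadratic variation of order $1/N_l$, and applying the BDG inequality and Gronwall's lemma upgrades this to $\hE[\sup_t|\eta|^2]\le M/N_l$.

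\textbf{The coupling term.} Put $e_i:=\bar X_i-\hat X_i$ and $\bar e:=\frac1{N_l}\sum_i e_i$; subtracting the two SDEs gives $e_i(0)=0$, $\bar X^{(N_l)}-\bar m_X=\bar e+\eta$, and
\begin{equation*}
\ud e_i=\{Ae_i+E_1(\bar e+\eta)\}\ud t+\{Ce_i+E_2(\bar e+\eta)\}\ud W_i .
\end{equation*}
From this equation, an It\^o/Gronwall estimate followed by summation over $i$ and division by $N_l$ yields $\frac1{N_l}\sum_i\hE[|e_i(t)|^2]\le M\int_0^T\hE[|\bar e+\eta|^2]\,\ud s$. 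Then, taking $\sup$ and expectation in the averaged equation for $\bar e$ and using BDG (the martingale there has quadratic variation $\le\frac{C}{N_l^2}\sum_i|e_i|^2+\frac{C}{N_l}|\bar e+\eta|^2$), together with the previous bound and $\hE[|\eta|^2]\le M/N_l$, one arrives at $\hE[\sup_{s\le t}|\bar e(s)|^2]\le M\int_0^t\hE[|\bar e(s)|^2]\,\ud s+M/N_l$, whence $\hE[\sup_t|\bar e|^2]\le M/N_l$ by Gronwall (absorbing a term of order $1/N_l$ into the left side for $N_l$ large, or iterating over short subintervals). Finally $\hE[\sup_t|\bar X^{(N_l)}-\bar m_X|^2]\le 2\hE[\sup_t|\bar e|^2]+2\hE[\sup_t|\eta|^2]\le M/N_l$.

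The main obstacle is the self-reference: the very quantity to be estimated, $\bar X^{(N_l)}-\bar m_X=\bar e+\eta$, re-enters the $e_i$-equations through the mean-field coupling, so a single Gronwall does not close — one must first control $\frac1{N_l}\sum_i\hE[|e_i|^2]$ by $\int\hE[|\bar e+\eta|^2]$ and only then estimate $\bar e$, being careful that every constant along the way is independent of $N_l$ and $N_f$, which is precisely where the uniform bounds of Lemma \ref{l5.1} and the uniform second moment of the $\hat X_i$ are used. A secondary point is that $\bar m_X$ and $\bar m_x$ are $\cF^0$-adapted rather than deterministic, so the law of large numbers in the second step must be invoked conditionally on $\cF^0$.
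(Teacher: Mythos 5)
Your proof is correct, but it takes a genuinely different route from the paper's. The paper argues directly: it writes the SDE \eqref{N2} satisfied by the difference $\bar{X}^{(N_l)}-\bar{m}_X$ itself, observes that its martingale part is an average $\frac{1}{N_l}\sum_i(\cdot)\,\ud W_i$ of terms driven by independent Brownian motions and uniformly $L^2$-bounded by Lemma \ref{l5.1}, so that BDG yields a contribution of order $M/N_l$, and closes with Gronwall; no auxiliary processes are introduced. You instead run a coupling (propagation-of-chaos) decomposition $\bar{X}^{(N_l)}-\bar{m}_X=\bar e+\eta$ with i.i.d.\ limit copies $\hat X_i$, a conditional law of large numbers for $\eta$, and a two-stage Gronwall for the coupling error $\bar e$. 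Your version is longer but in one respect more careful: in the drift of the averaged realized state the term $-BR^{-1}\frac{1}{N_l}\sum_i(B^\top\bar Y_i+D^\top\bar Z_i)$ does not exactly cancel against $-BR^{-1}\hE[B^\top\bar Y+D^\top\bar Z]$ in the equation for $\bar m_X$ (their difference is an $O(N_l^{-1/2})$ fluctuation that equation \eqref{N2} silently drops, as is the nonzero initial value $\frac{1}{N_l}\sum_i\xi_i$); in your decomposition these adjoint terms cancel exactly inside $e_i=\bar X_i-\hat X_i$ and the fluctuation is absorbed into $\eta$, where it is estimated honestly. Your remark that the leader and follower estimates decouple because the $\bar X_i$-equations in \eqref{N1} do not involve $\bar x^{(N_f)}$ (and vice versa) matches the structure the paper exploits implicitly, and your attention to the $\cF^0$-measurability of $\bar m_X$, $\bar m_x$ addresses a point the paper leaves vague. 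Both approaches deliver the stated $M/N_l$ and $M/N_f$ rates.
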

\begin{proof}
For the first one, we have
\begin{equation}\left\{\begin{aligned}\label{N2}
&\ud\Big(\bar{X}^{(N_l)}-\bar{m}_X\Big)=(A+E_1)\Big(\bar{X}^{(N_l)}-\bar{m}_X\Big)\ud t+\frac{1}{N_l}\sum_{i=1}^{N_l}[C\bar{X}_i-DR^{-1}(B^\top \bar{Y}_i+D^\top\bar{Z}_i)+E_2 \bar{X}^{(N_l)}]\ud W_i\\
&\Big(\bar{X}^{(N_l)}-\bar{m}_X\Big)(0)=0.
\end{aligned}\right.\end{equation}
From \eqref{N2}, by using Burkholder-Davis-Gundy (BDG) inequality and Lemma \ref{l5.1}, there exists a constant $M$, independent of $N_l$ and $N_f$, such that for any $t\in[0,T]$,
\begin{equation*}
\hE\Big[\sup_{0\leq s\leq t}\big|\bar{X}^{(N_l)}-\bar{m}_X\big|^2(s)\Big]\leq \frac{M}{N_l}+M\hE\Big[\int_{0}^{t}\big|\bar{X}^{(N_l)}-\bar{m}_X\big|^2(s)\ud s\Big],
\end{equation*}
and by Gronwall's inequality, we obtain
\begin{equation*}
\hE\Big[\sup_{0\leq s\leq t}\big|\bar{X}^{(N_l)}-\bar{m}_X\big|^2(s)\Big]\leq \frac{M}{N_l}.
\end{equation*}
By the same way, we can prove the second formula.
\end{proof}

\begin{lemma}\label{l5.3}
Under assumptions {\rm(H1)-(H2)}, and if the conditions in the Theorem \ref{C6l1}, Theorem \ref{C6l2}, Theorem \ref{C6l3} hold, then there exists a constant $M$ independent of $N_l$ and $N_f$, we have
\begin{equation*}\begin{aligned}
&\Big|\cJ_0(\bar{u}_0,\bar{u},\bar{v})-J_0(\bar{u}_0)\Big|=O\Big(\frac{1}{\sqrt{N}}\Big),\\
&\Big|\cJ_i^l(\bar{u}_0,\bar{u}_i,\bar{u}_{-i})-J_i^l(\bar{u}_0,\bar{u}_i)\Big|=O\Big(\frac{1}{\sqrt{N}}\Big),\\
&\Big|\cJ_j^f(\bar{u}_0,\bar{u},\bar{v}_j,\bar{v}_{-j})-J_j^f(\bar{u}_0,\bar{v}_j)\Big|=O\Big(\frac{1}{\sqrt{N}}\Big),\\
\end{aligned}\end{equation*}
where $N:=\min\{N_l,N_f\}$.
\end{lemma}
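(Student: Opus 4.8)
The idea is to compare, for each class of agent, the \emph{realized} decentralized state driven by the actual averages $\bar X^{(N_l)},\bar x^{(N_f)}$ in \eqref{N1} with the corresponding \emph{limiting} state driven by the frozen mean-field terms $\bar m_X,\bar m_x$ in \eqref{G5}, and then to push the resulting $O(N^{-1/2})$ discrepancy through the quadratic cost functionals. Denote by $\bar X_0^*,\bar X_i^*,\bar x_j^*$ the $\bar X_0,\bar X,\bar x$-components of the solution of the consistency-condition system \eqref{G5} (so $\hE[\bar X^*]=\bar m_X$, $\hE[\bar x^*]=\bar m_x$), reserving $\bar X_0,\bar X_i,\bar x_j$ for the realized states in \eqref{N1}. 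Since in each of the three pairs $\cJ_0(\bar u_0,\bar u,\bar v)$ vs $J_0(\bar u_0)$, $\cJ_i^l(\bar u_0,\bar u_i,\bar u_{-i})$ vs $J_i^l(\bar u_0,\bar u_i)$, $\cJ_j^f(\bar u_0,\bar u,\bar v_j,\bar v_{-j})$ vs $J_j^f(\bar u_0,\bar v_j)$ the same decentralized control from \eqref{C6t2} is used, the control running-costs $\|\bar u_0\|_{R_0}^2$, $\|\bar u_i\|_R^2$, $\|\bar v_j\|_{\tilde R}^2$ cancel in the differences, leaving only state-dependent terms.

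\textbf{Step 1 (state estimates).} Because the adjoint processes $(Y_0,Z_0)$, $(\bar Y_i,\bar Z_i)$, $(\bar y_j,\bar z_j)$ — hence the controls — are the same in \eqref{N1} and \eqref{G5}, the differences $\delta X_0:=\bar X_0-\bar X_0^*$, $\delta X_i:=\bar X_i-\bar X_i^*$, $\delta x_j:=\bar x_j-\bar x_j^*$ satisfy \emph{linear} SDEs whose inhomogeneous forcing terms involve only the already-controlled quantities $\bar X^{(N_l)}-\bar m_X$ and $\bar x^{(N_f)}-\bar m_x$ (through $E_0^1,F_0^1,E_0^2,F_0^2$ for $\delta X_0$; through $E_1,E_2$ for $\delta X_i$; through $F_1,F_2$ for $\delta x_j$), so these equations are effectively decoupled from one another. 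Applying the Burkholder–Davis–Gundy inequality and Gronwall's lemma together with Lemma \ref{l5.2} then gives
$$\hE\Big[\sup_{0\leq t\leq T}|\delta X_0(t)|^2\Big]+\sup_{1\leq i\leq N_l}\hE\Big[\sup_{0\leq t\leq T}|\delta X_i(t)|^2\Big]+\sup_{1\leq j\leq N_f}\hE\Big[\sup_{0\leq t\leq T}|\delta x_j(t)|^2\Big]\leq\frac{M}{N},$$
with $N=\min\{N_l,N_f\}$ and $M$ independent of $N_l,N_f$.

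\textbf{Step 2 (cost estimates).} For the major leader, use the identity $\langle Ma,a\rangle-\langle Mb,b\rangle=\langle M(a-b),a+b\rangle$ with $a=\bar X_0-(\lambda_0\bar X^{(N_l)}+(1-\lambda_0)\bar x^{(N_f)})$ and $b=\bar X_0^*-(\lambda_0\bar m_X+(1-\lambda_0)\bar m_x)$: then $a-b=\delta X_0-\lambda_0(\bar X^{(N_l)}-\bar m_X)-(1-\lambda_0)(\bar x^{(N_f)}-\bar m_x)$ is $O(N^{-1/2})$ in $L^2_{\cF}(0,T;\hR^n)$ by Step 1 and Lemma \ref{l5.2}, while $a+b$ is bounded in $L^2_{\cF}(0,T;\hR^n)$ uniformly in $N_l,N_f$ by Lemma \ref{l5.1} (together with the $N$-independent well-posedness of \eqref{G5}, which bounds $\bar X_0^*$, $\bar m_X=\hE[\bar X_i^*]$, $\bar m_x=\hE[\bar x_j^*]$). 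Cauchy–Schwarz then yields $|\cJ_0(\bar u_0,\bar u,\bar v)-J_0(\bar u_0)|=O(1/\sqrt N)$, the terminal term $\langle H_0\bar X_0(T),\bar X_0(T)\rangle-\langle H_0\bar X_0^*(T),\bar X_0^*(T)\rangle$ being handled identically. The minor-leader case is the same with $a=\bar X_i-(\lambda\bar X^{(N_l)}+(1-\lambda)\bar X_0)$, $b=\bar X_i^*-(\lambda\bar m_X+(1-\lambda)\bar X_0^*)$, so $a-b$ involves $\delta X_i$, $\delta X_0$ and $\bar X^{(N_l)}-\bar m_X$; the follower case uses $a=\bar x_j-(\tilde\lambda_1\bar X_0+\tilde\lambda_2\bar X^{(N_l)}+\tilde\lambda_3\bar x^{(N_f)})$, $b=\bar x_j^*-(\tilde\lambda_1\bar X_0^*+\tilde\lambda_2\bar m_X+\tilde\lambda_3\bar m_x)$, so $a-b$ involves $\delta x_j$, $\delta X_0$, $\bar X^{(N_l)}-\bar m_X$ and $\bar x^{(N_f)}-\bar m_x$. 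In all three cases the same Cauchy–Schwarz bound delivers the claimed $O(1/\sqrt N)$.

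\textbf{Main obstacle.} The only delicate point is Step 1: one must verify that the linear difference SDEs for $\delta X_0,\delta X_i,\delta x_j$ are driven purely by the quantities controlled in Lemma \ref{l5.2}, and in particular that no hidden dependence on $N_l$ or $N_f$ enters through the Gronwall constants — so that the bound is genuinely uniform. Once this is secured, Step 2 is routine manipulation of quadratic forms using Lemmas \ref{l5.1}–\ref{l5.2}.
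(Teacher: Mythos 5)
Your proposal is correct and follows the same overall strategy as the paper's proof: expand the difference of the quadratic costs, note that the control (and, in your case, terminal) terms cancel, and bound the cross term by Cauchy--Schwarz using the uniform moment bounds of Lemma \ref{l5.1} and the $O(1/N_l)$, $O(1/N_f)$ rates of Lemma \ref{l5.2}. The genuine difference is your Step 1. The paper's proof writes, e.g., $\cJ_0-J_0$ as the difference of $\|\bar X_0-(\lambda_0\bar X^{(N_l)}+(1-\lambda_0)\bar x^{(N_f)})\|_{Q_0}^2$ and $\|\bar X_0-(\lambda_0\bar m_X+(1-\lambda_0)\bar m_x)\|_{Q_0}^2$ with the \emph{same} process $\bar X_0$ in both terms, i.e.\ it silently identifies the realized state of \eqref{N1} (driven by $\bar X^{(N_l)},\bar x^{(N_f)}$ through $E_0^1,F_0^1,E_0^2,F_0^2$) with the limiting state of \eqref{G5} (driven by $\bar m_X,\bar m_x$), and likewise for $\bar X_i$ and $\bar x_j$; only the mean-field reference terms inside the cost are allowed to differ. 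Your decomposition into $\delta X_0,\delta X_i,\delta x_j$ plus the BDG--Gronwall estimate of Step 1 makes this identification quantitative and shows the extra contribution is also $O(1/\sqrt N)$, so your version is more complete than the paper's while reaching the same conclusion; the price is only the routine verification that the linear difference SDEs are forced solely by $\bar X^{(N_l)}-\bar m_X$ and $\bar x^{(N_f)}-\bar m_x$, which indeed holds since the adjoint-based controls in \eqref{N1} and \eqref{G5} coincide.
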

\begin{proof}
Let us first consider the major leader agent. Recall \eqref{C6e4} and \eqref{C6e7}, we have
\begin{equation}\label{N3}\begin{aligned}
&\cJ_0(\bar{u}_0,\bar{u},\bar{v})-J_0(\bar{u}_0)\\
=&\frac{1}{2}\hE\Big\{\int_0^T \Big[\Big\|\bar{X}_0-\big(\lambda_0\bar{X}^{(N_l)}+(1-\lambda_0)\bar{x}^{(N_f)}\big)\Big\|_{Q_0}^2 -\Big\|\bar{X}_0-\big(\lambda_0\bar{m}_X+(1-\lambda_0)\bar{m}_x\big)\Big\|_{Q_0}^2\Big]\ud t\Big\},\\
=&\hE\Big\{\int_0^T \Big\langle Q_0\Big(\bar{X}_0-\big(\lambda_0\bar{m}_X+(1-\lambda_0)\bar{m}_x\big)\Big),
\lambda_0(\bar{m}_X-\bar{X}^{(N_l)})+(1-\lambda_0)(\bar{m}_x-\bar{x}^{(N_f)})\Big\rangle \ud t\Big\}\\
&+\frac{1}{2}\hE\Big\{\int_0^T \Big\|\lambda_0(\bar{X}^{(N_l)}-\bar{m}_X)+(1-\lambda_0)(\bar{x}^{(N_f)}-\bar{m}_x)\Big\|_{Q_0}^2\ud t\Big\}.\\
\end{aligned}\end{equation}
By H\"older inequality and Lemma \ref{l5.1}, there exists a constant $M$ independent of $N_l$ and $N_f$ such that
\begin{equation}\label{N4}\begin{aligned}
&\hE\Big\{\int_0^T \Big\langle Q_0\Big|\bar{X}_0-\big(\lambda_0\bar{m}_X+(1-\lambda_0)\bar{m}_x\big)\Big|,
\Big|\lambda_0(\bar{m}_X-\bar{X}^{(N_l)})+(1-\lambda_0)(\bar{m}_x-\bar{x}^{(N_f)})\Big|\Big\rangle \ud t\Big\}\\
\leq&\hE\Big\{\int_0^T \Big|\bar{X}_0-\big(\lambda_0\bar{m}_X+(1-\lambda_0)\bar{m}_x\big)\Big|^2\ud t\Big\}^{\frac{1}{2}}\\
&\hE\Big\{\int_0^T \Big|Q_0\Big(\lambda_0(\bar{m}_X-\bar{X}^{(N_l)})+(1-\lambda_0)(\bar{m}_x-\bar{x}^{(N_f)})\Big)\Big|^2\ud t\Big\}^{\frac{1}{2}}\\
\leq&M\hE\Big\{\int_0^T \Big|Q_0\Big(\lambda_0(\bar{m}_X-\bar{X}^{(N_l)})+(1-\lambda_0)(\bar{m}_x-\bar{x}^{(N_f)})\Big)\Big|^2\ud t\Big\}^{\frac{1}{2}}.\\
\end{aligned}\end{equation}
Noting \eqref{N3}, \eqref{N4} and Lemma \ref{l5.2}, there exists a constant $M$ independent of $N_l$ and $N_f$ such that
\begin{equation}\label{N5}\begin{aligned}
&\hE\Big\{\int_0^T \Big|Q_0\Big(\lambda_0(\bar{m}_X-\bar{X}^{(N_l)})+(1-\lambda_0)(\bar{m}_x-\bar{x}^{(N_f)})\Big)\Big|^2\ud t\Big\}^{\frac{1}{2}}\\
\leq&\Big\{\hE\Big[\sup_{0\leq s\leq t}\big|\bar{X}^{(N_l)}-\bar{m}_X\big|^2(s)\Big]\int_{0}^{T}|Q_0\lambda_0|^2\ud t\Big\}^{\frac{1}{2}} \Big\{\hE\Big[\sup_{0\leq s\leq t}\big|\bar{x}^{(N_f)}-\bar{m}_x\big|^2(s)\Big]\int_{0}^{T}|1-\lambda_0|^2\ud t\Big\}^{\frac{1}{2}}\\
\leq&M\Big(\frac{1}{\sqrt{N_l}}+\frac{1}{\sqrt{N_f}}\Big)=O\Big(\frac{1}{\sqrt{N}}\Big).
\end{aligned}\end{equation}
The rest two claims can be proved in the same way.
\end{proof}

\begin{remark}
We denote $M$ the common constant of the different boundaries. In the above lemmas, the constant $M$ may vary each line by line but it is always independent of the number of minor-leader agents $N_l$ and the number of follower agents $N_f$.
\end{remark}

\subsection{Major leader agent's perturbation}
In this subsection, we will prove that the control strategies set $(\bar{u}_0,\bar{u}_1,\ldots,\bar{u}_{N_l},\bar{v}_1,\ldots,\bar{v}_{N_f})$ given by Theorem \ref{thm5.1} is an $\e$-Nash equilibrium of mixed S-MM-MFG for major leader agent, i.e. there exists an $\e=\e(N)\geq0$, $\lim_{N\rightarrow\infty}\e(N)=0$ such that

$$\cJ_0(\bar{u}_0,\bar{u},\bar{v})\leq\cJ_0(u_0,\bar{u},\bar{v})+\e,\qq\forall u_0\in\cU_0^c[0,T].$$

Let us consider that the major leader agent $\cA_0$ uses an alternative strategy $u_0$, each minor leader agent $\cA_i^l$ uses the control $\bar{u}_i=-R^{-1}(t)(B(t)^\top \bar{Y}_i(t)+D(t)^\top \bar{Z}_i(t))$ and each follower agent $\cA_j^f$ uses the control $\bar{v}_j=-\tilde{R}^{-1}(t)(\tilde{B}(t)^\top \bar{y}_j(t)+\tilde{D}(t)^\top \bar{z}_j(t))$. To prove $(\bar{u}_0,\bar{u}_1,\ldots,\bar{u}_{N_l},\bar{v}_1,\ldots,\bar{v}_{N_f})$ is an $\e$-Nash equilibrium for the major leader agent, we need to show that for possible alternative control $u_0$, $\inf_{u_0\in\cU_0^c[0,T]}\cJ_0(u_0,\bar{u},v[u_0])\geq\cJ_0(\bar{u}_0,\bar{u},\bar{v})-\e$. Then we only need to consider the perturbation $u_0\in\cU_0^c[0,T]$ such that $\cJ_0(u_0,\bar{u},v[u_0])\leq\cJ_0(\bar{u}_0,\bar{u},\bar{v})$. By the representation of the cost functional in \cite{YZ99,SLY16}, we can give the representation of the cost functional as follows.
\begin{proposition}\label{prop5.1}
Let {\rm(H1)-(H2)} hold. There exists a bounded self-adjoint linear operator $N_0:\cU_0^c[0,T]\rightarrow\cU_0^c[0,T]$, a bounded linear operator $H_0:\hR^n\rightarrow\cU_0^c[0,T]$, a bounded real-valued function $M_0:\hR^n\rightarrow\hR$ such that
\begin{equation*}
\cJ_0(x_0,x,y;u_0,u[u_0],v[u_0])=\frac{1}{2}\Big\{\Big\langle N_0u_0(\cd),u_0(\cd)\Big\rangle+2\Big\langle H_0(x_0),u_0(\cd)\Big\rangle+M_0(x_0)\Big\},\ \forall(x_0,u_0)\in\hR^n\times\cU_0^c[0,T].
\end{equation*}
\end{proposition}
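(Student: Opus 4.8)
The plan is to make the dependence of $\cJ_0$ on the major leader's control $u_0$ completely explicit by exhibiting the state trajectories entering the cost as an affine function of $u_0$, and then to substitute into the quadratic functional; this is the standard linear-quadratic representation argument, as in \cite{YZ99,SLY16}. First I would fix the initial data $(x_0,x,y)$ and the minor-leader controls $\bar{\textbf{u}}$, and observe that once $\cA_0$ applies $u_0\in\cU_0^c[0,T]$ and the followers act (either by their fixed decentralized controls $\bar{v}_j$ or by re-optimizing through the response $v[u_0]$), the system governing $\bar{X}_0$, the follower states $\{\bar{x}_j\}$ and the relevant adjoint processes is, by Theorems \ref{C6l1}--\ref{C6l3} and the assumed solvability of the Riccati equations \eqref{C6e18}--\eqref{C6e19}, a uniquely solvable \emph{linear} (forward-backward) system in which $u_0$ enters only through its own affine coefficients. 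Splitting this system into the component generated by $u_0$ and the one generated by $(x_0,x,y)$ together with the fixed mean-field data, I would write, for all $t$,
\[
\big(\bar{X}_0,\{\bar{x}_j\}\big)=\Theta u_0+\Xi,
\]
where $\Theta:\cU_0^c[0,T]\rightarrow L^2_{\cF}(\Omega;C([0,T];\hR^{n}))$ (and into the corresponding follower-trajectory space) is a bounded linear operator — boundedness coming from the usual a priori estimate $\|\Theta u_0\|^2\le C\|u_0\|_{\cU_0^c}^2$ with $C$ depending only on the coefficients and $T$ — and $\Xi$ is the trajectory obtained with $u_0\equiv0$, which is linear in $x_0$ for $(x,y)$ and the mean-field data held fixed.

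Next I would substitute this decomposition into \eqref{C6e4}. Writing the running cost as a symmetric quadratic form in the pair $\big(\bar{X}_0-(\lambda_0\bar{X}^{(N_l)}+(1-\lambda_0)\bar{x}^{(N_f)}),\,u_0\big)$ built from the symmetric matrices $Q_0$, $R_0$ (and the terminal quadratic form from $H_0\in\cS^n$), and using linearity of the empirical averages, the cost decomposes into a term purely quadratic in $u_0$, which defines $\langle N_0 u_0,u_0\rangle$; a term bilinear in $(x_0,u_0)$, which defines $2\langle H_0(x_0),u_0\rangle$; and a collection of $u_0$-free terms, which defines $M_0(x_0)$. This yields exactly the asserted identity.

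It then remains to check the three structural properties. Boundedness of $N_0$ and $H_0$ follows from the estimate on $\Theta$, the Cauchy--Schwarz inequality, and a priori moment bounds on $\Xi$ (as in Lemma \ref{l5.1}); linearity of $x_0\mapsto H_0(x_0)$ follows because the $x_0$-dependence of $\Xi$ is linear while $\Theta$ does not involve $x_0$, so the cross term is bilinear in $(x_0,u_0)$; self-adjointness of $N_0$ follows because $u_0\mapsto\langle N_0 u_0,u_0\rangle$ is the composition of the bounded linear map $u_0\mapsto(\Theta u_0,u_0)$ with the self-adjoint operator defined by the block-diagonal, symmetric-block integrand, so $N_0$ has the form $\mathcal{T}^{*}\mathcal{S}\mathcal{T}$ with $\mathcal{S}=\mathcal{S}^{*}$ and is therefore self-adjoint; polarization then upgrades the quadratic form to the bilinear form $\langle N_0\cd,\cd\rangle$.

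The step I expect to be the main obstacle is the first one: showing that $u_0\mapsto(\bar{X}_0,\{\bar{x}_j\})$ is well defined, affine and bounded in the case where the followers re-optimize in response to $u_0$, since then the dependence on $u_0$ propagates through a coupled forward-backward system rather than through a plain SDE. This is handled by the linear feedback representation of $\bar{v}_j$ and of the auxiliary equations for $\Phi_1$ (and $\Phi_2$) obtained in Section 3, which keeps the whole system linear in $u_0$; the FBSDE well-posedness assumed here then delivers the bounded linear operator $\Theta$ together with its continuous dependence on $u_0$. If instead one works with the fixed follower controls $\bar{v}_j$, this step collapses to the elementary affine dependence of $\bar{X}_0$ on $u_0$ and the remainder of the argument is unchanged.
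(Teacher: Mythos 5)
Your argument is correct and is exactly the standard affine-decomposition representation (state trajectory $=\Theta u_0+\Xi$, substitute into the quadratic cost, read off the quadratic, bilinear and constant parts) that the paper invokes here: its entire proof is the one-line citation ``Refer to Proposition 3.1 in \cite{SLY16}.'' Your extra care in checking that the followers' re-optimized response $v[u_0]$ stays affine in $u_0$ through the linear feedback/FBSDE representation is a reasonable filling-in of the same approach rather than a different route.
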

\begin{proof}
Refer to Proposition 3.1 in \cite{SLY16}.
\end{proof}
So if we assume that $N_0\gg0$, from Lemma \ref{l5.3}, then there exists a bounded constant $c$, such that
\begin{equation*}
\hE\Big[\int_{0}^{T}\Big|N_0^{\frac{1}{2}}u_0(t)+N_0^{-\frac{1}{2}}H_0(x_0)\Big|^2\ud t\Big]
\leq\cJ_0(u_0,\bar{u},\bar{v})+c\leq\cJ_0(\bar{u}_0,\bar{u},\bar{v})+c\leq J_0(\bar{u}_0)+c+O\Big(\frac{1}{\sqrt{N}}\Big),
\end{equation*}
which implies that $\hE\Big[\int_{0}^{T}\big|u_0(t)\big|^2\ud t\Big]\leq M$, where $M$ is a constant independent of $N$. In fact, by bounded inverse theorem, $N_0^{-1}$ is bounded, so there exists a constant $0<\gamma\leq\|N_0^{\frac{1}{2}}\|$, such that
$$\gamma\hE\Big[\int_{0}^{T}\big|u_0(t)\big|^2\ud t\Big]\leq\|N_0^{\frac{1}{2}}\|\hE\Big[\int_{0}^{T}\Big|u_0(t)+N_0^{-1}H_0(x_0)\Big|^2\ud t\Big]\leq J_0(\bar{u}_0)+c+O\Big(\frac{1}{\sqrt{N}}\Big).$$ Then we have $\hE\Big[\int_{0}^{T}\big|u_0(t)\big|^2\ud t\Big]\leq M$. Similar to Lemma \ref{l5.1}, we can show that
\begin{equation}\label{N6}
\hE\Big[\sup_{0\leq t\leq T}\big|X_0(t)\big|^2\Big]\leq M.
\end{equation}

\begin{lemma}\label{l5.4}
Under assumptions {\rm(H1)-(H2)}, and if the conditions in the Theorem \ref{C6l1}, Theorem \ref{C6l2}, Theorem \ref{C6l3} hold, for the major leader agent's perturbation control $u_0$, we have
$$\Big|\cJ_0(u_0,\bar{u},\bar{v})-J_0(u_0)\Big|=O\Big(\frac{1}{\sqrt{N}}\Big).$$
\end{lemma}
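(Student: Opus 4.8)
The plan is to mimic the proof of Lemma~\ref{l5.3}, the only new feature being that the perturbed major-leader state no longer coincides with its mean-field limit, so this discrepancy must be estimated as well. First I would record that, since each minor leader keeps using $\bar u_i=-R^{-1}(B^\top\bar Y_i+D^\top\bar Z_i)$ and each follower keeps using $\bar v_j=-\tilde R^{-1}(\tilde B^\top\bar y_j+\tilde D^\top\bar z_j)$ with $(\bar Y_i,\bar Z_i)$, $(\bar y_j,\bar z_j)$ the fixed, precomputed processes from \eqref{G5}, the processes $\bar X_i$, $\bar x_j$, hence $\bar X^{(N_l)}$ and $\bar x^{(N_f)}$ appearing in \eqref{N1}, are unchanged by the major leader's deviation; only $X_0$ is affected, becoming $X_0^{u_0}$, the solution of the first line of \eqref{N1} with $\bar u_0$ replaced by $u_0$. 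On the limiting side, $J_0(u_0)=J_0(\xi_0,\bar m_X,\bar m_x;u_0)$ is the cost, against the frozen target $\lambda_0\bar m_X+(1-\lambda_0)\bar m_x$, of the state $\hat X_0$ solving
\begin{equation*}
\ud\hat X_0=\{A_0\hat X_0+B_0u_0+E_0^1\bar m_X+F_0^1\bar m_x\}\ud t+\{C_0\hat X_0+D_0u_0+E_0^2\bar m_X+F_0^2\bar m_x\}\ud W_0,\qquad \hat X_0(0)=\xi_0.
\end{equation*}

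Next I would estimate $\hE\big[\sup_{0\le t\le T}|X_0^{u_0}(t)-\hat X_0(t)|^2\big]$. The difference $\delta X_0:=X_0^{u_0}-\hat X_0$ solves a linear SDE started from $0$ whose inhomogeneous terms are $E_0^1(\bar X^{(N_l)}-\bar m_X)+F_0^1(\bar x^{(N_f)}-\bar m_x)$ in the drift and $E_0^2(\bar X^{(N_l)}-\bar m_X)+F_0^2(\bar x^{(N_f)}-\bar m_x)$ in the diffusion; by the Burkholder--Davis--Gundy inequality, Lemma~\ref{l5.2} and Gronwall's inequality,
\begin{equation*}
\hE\Big[\sup_{0\le t\le T}|\delta X_0(t)|^2\Big]\le M\Big(\frac1{N_l}+\frac1{N_f}\Big)=O\Big(\frac1N\Big).
\end{equation*}
I would also use the a priori bounds $\hE[\sup_{0\le t\le T}|X_0^{u_0}(t)|^2]\le M$ (this is \eqref{N6}) and $\hE[\sup_{0\le t\le T}|\hat X_0(t)|^2]\le M$, the latter following from $\hE\big[\int_0^T|u_0|^2\ud t\big]\le M$ established just before the lemma together with another BDG--Gronwall argument and the fact that $\bar m_X,\bar m_x\in L^2(0,T;\hR^n)$ are fixed independently of $N_l,N_f$.

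Finally I would expand the cost difference exactly as in \eqref{N3}. Setting $a:=X_0^{u_0}-\big(\lambda_0\bar X^{(N_l)}+(1-\lambda_0)\bar x^{(N_f)}\big)$ and $b:=\hat X_0-\big(\lambda_0\bar m_X+(1-\lambda_0)\bar m_x\big)$, the $\|u_0\|_{R_0}^2$ terms cancel and
\begin{equation*}
\cJ_0(u_0,\bar u,\bar v)-J_0(u_0)=\tfrac12\hE\Big\{\int_0^T\langle Q_0(a-b),a+b\rangle\,\ud t+\langle H_0\delta X_0(T),X_0^{u_0}(T)+\hat X_0(T)\rangle\Big\}.
\end{equation*}
Since $a-b=\delta X_0-\lambda_0(\bar X^{(N_l)}-\bar m_X)-(1-\lambda_0)(\bar x^{(N_f)}-\bar m_x)$, whose norm in $L^2(\Omega\times[0,T])$ is $O(1/\sqrt N)$ by the previous paragraph and Lemma~\ref{l5.2}, while $a+b$ and $X_0^{u_0}(T)+\hat X_0(T)$ have second moments bounded uniformly in $N_l,N_f$, the Cauchy--Schwarz (H\"older) inequality yields $|\cJ_0(u_0,\bar u,\bar v)-J_0(u_0)|=O(1/\sqrt N)$, which is the claim.

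The only genuinely new ingredient compared with Lemma~\ref{l5.3} is the bound on $\delta X_0$, and that is a routine BDG--Gronwall computation; the rest is a transcription of the earlier argument. I therefore expect no serious obstacle beyond the bookkeeping of which processes are perturbed and the verification of the uniform second-moment bounds for $X_0^{u_0}$ and $\hat X_0$ — the point being that the $O(1/\sqrt N)$ rate is not degraded by the deviation precisely because $\|u_0\|_{L^2}$ is controlled uniformly in $N$.
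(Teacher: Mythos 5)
Your proposal is correct and, in fact, more careful than the paper's own argument. The paper's proof of Lemma~\ref{l5.4} writes the difference $\cJ_0(u_0,\bar u,\bar v)-J_0(u_0)$ with one and the same process $X_0$ appearing in both cost functionals, so that the control term $\|u_0\|_{R_0}^2$ and the terminal term $\|X_0(T)\|_{H_0}^2$ cancel identically and only the change of target, $\lambda_0\bar X^{(N_l)}+(1-\lambda_0)\bar x^{(N_f)}$ versus $\lambda_0\bar m_X+(1-\lambda_0)\bar m_x$, survives; the rest is the same H\"older/Lemma~\ref{l5.2} computation as in Lemma~\ref{l5.3}, using \eqref{N6} in place of Lemma~\ref{l5.1}. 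This tacitly identifies the finite-population perturbed state (driven by $E_0^1\bar X^{(N_l)}+F_0^1\bar x^{(N_f)}$ and $E_0^2\bar X^{(N_l)}+F_0^2\bar x^{(N_f)}$) with the limiting perturbed state $\hat X_0$ (driven by $\bar m_X,\bar m_x$), which is legitimate only up to an error that the paper does not estimate. You make exactly this distinction explicit: your BDG--Gronwall bound $\hE[\sup_t|\delta X_0|^2]=O(1/N)$ for $\delta X_0=X_0^{u_0}-\hat X_0$, together with the resulting terminal-cost term $\langle H_0\delta X_0(T),X_0^{u_0}(T)+\hat X_0(T)\rangle$, is the ingredient the paper omits, and with it your decomposition $\langle Q_0(a-b),a+b\rangle$ delivers the same $O(1/\sqrt N)$ rate. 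In short, the two arguments share the same skeleton (cost decomposition, Lemma~\ref{l5.2}, uniform second-moment bounds, Cauchy--Schwarz), but yours closes a genuine gap in the paper's version; nothing in your write-up would fail, and the extra step costs nothing in the rate because $\|u_0\|_{L^2}$ is controlled uniformly in $N$.
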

\begin{proof}
Recall \eqref{C6e4} and \eqref{C6e7}, we have
\begin{equation}\label{N7}\begin{aligned}
&\cJ_0(u_0,\bar{u},\bar{v})-J_0(u_0)\\
=&\frac{1}{2}\hE\Big\{\int_0^T \Big[\Big\|X_0-\big(\lambda_0\bar{X}^{(N_l)}+(1-\lambda_0)\bar{x}^{(N_f)}\big)\Big\|_{Q_0}^2 -\Big\|X_0-\big(\lambda_0\bar{m}_X+(1-\lambda_0)\bar{m}_x\big)\Big\|_{Q_0}^2\Big]\ud t\Big\},\\
=&\hE\Big\{\int_0^T \Big\langle Q_0\Big(X_0-\big(\lambda_0\bar{m}_X+(1-\lambda_0)\bar{m}_x\big)\Big),
\lambda_0(\bar{m}_X-\bar{X}^{(N_l)})+(1-\lambda_0)(\bar{m}_x-\bar{x}^{(N_f)})\Big\rangle \ud t\Big\}\\
&+\frac{1}{2}\hE\Big\{\int_0^T \Big\|\lambda_0(\bar{X}^{(N_l)}-\bar{m}_X)+(1-\lambda_0)(\bar{x}^{(N_f)}-\bar{m}_x)\Big\|_{Q_0}^2\ud t\Big\}.\\
\end{aligned}\end{equation}
By H\"older inequality and \eqref{N6}, there exists a constant $M$ independent of $N_l$ and $N_f$ such that
\begin{equation}\label{N8}{\small\begin{aligned}
&\hE\Big\{\int_0^T\Big\langle \Big|Q_0\Big(X_0-\big(\lambda_0\bar{m}_X+(1-\lambda_0)\bar{m}_x\big)\Big)\Big|,
\Big|\lambda_0(\bar{m}_X-\bar{X}^{(N_l)})+(1-\lambda_0)(\bar{m}_x-\bar{x}^{(N_f)})\Big|\Big\rangle\ud t\Big\}\\
\leq&\hE\Big\{\int_0^T \Big|X_0-\big(\lambda_0\bar{m}_X+(1-\lambda_0)\bar{m}_x\big)\Big|^2\ud t\Big\}^{\frac{1}{2}}
\hE\Big\{\int_0^T \Big|Q_0\Big(\lambda_0(\bar{m}_X-\bar{X}^{(N_l)})+(1-\lambda_0)(\bar{m}_x-\bar{x}^{(N_f)})\Big)\Big|^2\ud t\Big\}^{\frac{1}{2}}\\
\leq&M\hE\Big\{\int_0^T \Big|Q_0\Big(\lambda_0(\bar{m}_X-\bar{X}^{(N_l)})+(1-\lambda_0)(\bar{m}_x-\bar{x}^{(N_f)})\Big)\Big|^2\ud t\Big\}^{\frac{1}{2}}.\\
\end{aligned}}\end{equation}
At last, same as the Lemma \ref{l5.3}, noting \eqref{N7}, \eqref{N8}, and Lemma \ref{l5.2}, there exists a constant $M$ independent of $N_l$ and $N_f$ such that
\begin{equation}\label{N9}\begin{aligned}
&\hE\Big\{\int_0^T \Big|Q_0\Big(\lambda_0(\bar{m}_X-\bar{X}^{(N_l)})+(1-\lambda_0)(\bar{m}_x-\bar{x}^{(N_f)})\Big)\Big|^2\ud t\Big\}^{\frac{1}{2}}\\
\leq&\Big\{\hE\Big[\sup_{0\leq s\leq t}\big|\bar{X}^{(N_l)}-\bar{m}_X\big|^2(s)\Big]\int_{0}^{T}|Q_0\lambda_0|^2\ud t\Big\}^{\frac{1}{2}} \Big\{\hE\Big[\sup_{0\leq s\leq t}\big|\bar{x}^{(N_f)}-\bar{m}_x\big|^2(s)\Big]\int_{0}^{T}|1-\lambda_0|^2\ud t\Big\}^{\frac{1}{2}}\\
\leq&M\Big(\frac{1}{\sqrt{N_l}}+\frac{1}{\sqrt{N_f}}\Big)=O\Big(\frac{1}{\sqrt{N}}\Big).
\end{aligned}\end{equation}
\end{proof}

Taking the advantage of Lemma \ref{l5.3} and Lemma \ref{l5.4}, we can give the first part of the proof to the Theorem \ref{thm5.1}, i.e. the control strategies set $(\bar{u}_0,\bar{u}_1,\ldots,\bar{u}_{N_l},\bar{v}_1,\ldots,\bar{v}_{N_f})$ given by Theorem \ref{thm5.1} is an $\e$-Nash equilibrium of the mixed S-MM-MFG for major leader agent.

\medskip

\textbf{Part A of the proof to Theorem \ref{thm5.1}}

\medskip
\noindent Combining Lemma \ref{l5.3} and Lemma \ref{l5.4}, we have
$$\cJ_0(\bar{u}_0,\bar{u},\bar{v})\leq J_0(\bar{u}_0)+O\Big(\frac{1}{\sqrt{N}}\Big)\leq J_0(u_0)+O\Big(\frac{1}{\sqrt{N}}\Big)\leq \cJ_0(u_0,\bar{u},\bar{v})+O\Big(\frac{1}{\sqrt{N}}\Big),$$
where the second inequality comes from the fact that $J_0(\bar{u}_0)=\inf_{u_0\in\cU_0^c[0,T]}J_0(u_0)$. Consequently, the Theorem \ref{thm5.1} holds for the major leader agent with $\e=O\Big(\frac{1}{\sqrt{N}}\Big)$.
\endproof

\subsection{Minor leader agent's perturbation}
Now, let us consider the following case: a given minor leader agent $\cA_i^l$ uses an alternative strategy $u_i\in\cU_i^c[0,T]$, the major leader agent uses $\bar{u}_0$, each follower agent $\cA_j^f$ uses $\bar{v}_j$ while other minor leader agents use the control $\bar{u}_{-i}$. In fact, by the representation of the cost functional (which is similar to the argument of major leader agent), to prove $(\bar{u}_0,\bar{u}_1,\ldots,\bar{u}_{N_l},\bar{v}_1,\ldots,\bar{v}_{N_f})$ is an $\e$-Nash equilibrium for the minor leader agent, we only need to consider the perturbation $u_i\in\cU_i^c[0,T]$ satisfying
$$\hE\Big[\int_{0}^{T}\big|u_i(t)\big|^2\ud t\Big]\leq M,$$
where $M$ is a constant independent of $N_l$. Then similar to Lemma \ref{l5.1}, we can show that
\begin{equation}\label{N10}
\sup_{1\leq i\leq N_l}\hE\Big[\sup_{0\leq t\leq T}\big|X_i(t)\big|^2\Big]\leq M.
\end{equation}

\begin{lemma}\label{l5.5}
Under assumptions {\rm(H1)-(H2)}, and if the conditions in the Theorem \ref{C6l1}, Theorem \ref{C6l2}, Theorem \ref{C6l3} hold, then there exists a constant $M$ independent of $N_l$ and $N_f$, such that
$$\hE\Big[\sup_{0\leq t\leq T}\big|X^{(i,N_l)}(t)-\bar{m}_X(t)\big|^2\Big]\leq \frac{M}{N_l},$$
where $X^{(i,N_l)}(t)=\frac{1}{N_l}\big(X_i(t)+\sum_{k\neq i}\bar{X}_k(t)\big)$.
\end{lemma}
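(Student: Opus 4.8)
\emph{Proof proposal.} The plan is to mimic the argument of Lemma \ref{l5.2}, the only genuinely new feature being the single deviating agent $\cA_i^l$. Set $W(t):=X^{(i,N_l)}(t)-\bar m_X(t)$; here $\bar X_k$ for $k\neq i$ denotes the state of the $k$-th minor leader in the perturbed population, which keeps its equilibrium control $\bar u_k$ but now feels the common mean field $X^{(i,N_l)}$. First I would write down the dynamics of the perturbed empirical average: summing the state equations over all minor leaders (the $i$-th one running $u_i$, the rest running $\bar u_k$) and dividing by $N_l$ gives, schematically,
$$\ud X^{(i,N_l)}=\Big\{(A+E_1)X^{(i,N_l)}+\tfrac1{N_l}Bu_i+\tfrac1{N_l}\sum_{k\neq i}B\bar u_k\Big\}\ud t+\tfrac1{N_l}\sum_{k=1}^{N_l}\{\,\cdots\,\}\ud W_k,$$
where the $i$-th diffusion summand is $C X_i+D u_i+E_2X^{(i,N_l)}$ and the others are $C\bar X_k+D\bar u_k+E_2X^{(i,N_l)}$. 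Subtracting the ODE for $\bar m_X$ coming from \eqref{G4} yields a closed linear SDE for $W$ with a bounded deterministic drift coefficient and a forcing term.

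Next I would estimate the forcing in three pieces. (i) The term $\tfrac1{N_l}B(u_i-\bar u_i)$: by the a priori bound $\hE\int_0^T|u_i|^2\,\ud t\leq M$ recorded just before the lemma, together with the analogous bound $\hE\int_0^T|\bar u_i|^2\,\ud t\leq M$ (from the $L^2$ bounds on the solution of \eqref{C6e23}, cf.\ the proof of Lemma \ref{l5.1}), this is $O(1/N_l)$ in $L^2((0,T)\times\Omega)$. (ii) The law-of-large-numbers discrepancy $\tfrac1{N_l}\sum_{k=1}^{N_l}B\bar u_k-\hE[B\bar u]$: using the Riccati decoupling $\bar Y_k=P_3\bar X_k+\Phi_3$, $\bar Z_k=\cR^{-1}P_3\cS\bar X_k+\cR^{-1}f$ with $\Phi_3,f$ deterministic, this discrepancy is a bounded matrix times $\bar X^{(N_l)}-\bar m_X$ — precisely the cancellation implicit in passing from \eqref{G4} to \eqref{N2} — hence $O(1/\sqrt{N_l})$ in $L^2$ by Lemma \ref{l5.2}, and the leftover $\tfrac1{N_l}B\bar u_i$ is again $O(1/N_l)$. (iii) The martingale part: the $W_k$ are independent, so Burkholder--Davis--Gundy bounds its running supremum squared by $\tfrac{C}{N_l^2}\sum_{k=1}^{N_l}\hE\int_0^T|\{\cdots\}_k|^2\,\ud t$, which is $O(1/N_l)$ once $\sup_k\hE[\sup_t|\bar X_k|^2]\leq M$ and $\hE[\sup_t|X_i|^2]\leq M$; the latter is \eqref{N10}, and the former follows from a Gronwall estimate for the non-deviating leaders identical to that of Lemma \ref{l5.1}, since their controls $\bar u_k$ have $L^2$ norm bounded uniformly in $k$.

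Combining (i)--(iii), the forcing is $O(1/\sqrt{N_l})$ in $L^2$, i.e.\ $O(1/N_l)$ after squaring. Applying BDG to the martingale term, H\"older to the drift term, and then Gronwall's inequality to $t\mapsto\hE[\sup_{0\leq s\leq t}|W(s)|^2]$ gives $\hE[\sup_{0\leq t\leq T}|W(t)|^2]\leq M/N_l$, which is the assertion.

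I expect the main difficulty to be the bookkeeping in the first step: because the deviation of $\cA_i^l$ enters the common mean-field term and therefore perturbs every minor leader's state, one cannot simply claim that ``only the $i$-th term of the empirical average has changed''; the drift of $W$ stays linear in $W$ only thanks to the affine (Riccati) representation of the equilibrium controls, exactly as in the derivation of \eqref{N2}. Once that structure is isolated, the rest is the routine BDG--Gronwall bookkeeping already carried out for Lemmas \ref{l5.1} and \ref{l5.2}.
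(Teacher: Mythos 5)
Your plan reaches the right conclusion, but it takes a genuinely different and much heavier route than the paper. The paper's proof is essentially one line: since the lemma \emph{defines} $X^{(i,N_l)}$ as the average of the perturbed state $X_i$ with the \emph{unperturbed} equilibrium states $\bar X_k$, $k\neq i$, one has the algebraic identity $X^{(i,N_l)}-\bar X^{(N_l)}=\tfrac{1}{N_l}(X_i-\bar X_i)$ (the paper writes $\tfrac{1}{N_l}X_i$, a harmless slip), whose sup-square expectation is $O(1/N_l^2)$ by \eqref{N10} and Lemma \ref{l5.1}; the triangle inequality with Lemma \ref{l5.2} then gives the $M/N_l$ bound. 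You instead reinterpret $\bar X_k$, $k\neq i$, as states of a perturbed population that feels the deviator through the common empirical mean field, and you re-derive the SDE for the perturbed average with a BDG--Gronwall closing argument. That is the more faithful model of a unilateral deviation and would be unavoidable if the non-deviating agents' realized states were required to respond to the actual empirical average containing $X_i$; but it is not what the lemma's definition of $X^{(i,N_l)}$ asks for, and it obliges you to re-establish uniform second-moment bounds for the perturbed non-deviating states and, in your step (ii), to invoke the Riccati feedback representation of $\bar u_k$, which sits awkwardly in this open-loop section (the law-of-large-numbers discrepancy is $O(1/\sqrt{N_l})$ either way, by the independence across $k$ built into the decentralized construction). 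As stated, the lemma admits the elementary triangle-inequality argument, and that is the one the paper uses.
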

\begin{proof}
In fact, we have
$$X^{(i,N_l)}(t)-\bar{X}^{(N_l)}(t)=\frac{1}{N_l}X_i(t),$$
by \eqref{N10} , it yields
$$\hE\Big[\sup_{0\leq t\leq T}\big|X^{(i,N_l)}(t)-\bar{X}^{(N_l)}(t)\big|^2\Big]\leq \frac{M}{N_l}.$$
Combined with Lemma \ref{l5.2}, we can directly get
$$\hE\Big[\sup_{0\leq t\leq T}\big|X^{(i,N_l)}(t)-\bar{m}_X(t)\big|^2\Big]\leq \frac{M}{N_l}.$$
\end{proof}

\begin{lemma}\label{l5.6}
Under assumptions {\rm(H1)-(H2)}, and if the conditions in the Theorem \ref{C6l1}, Theorem \ref{C6l2}, Theorem \ref{C6l3} hold, for the minor leader agent's perturbation control $u_i$, we have
$$\Big|\cJ_i^l(\bar{u}_0,u_i,\bar{u}_{-i})-J_i^l(\bar{u}_0,u_i)\Big|=O\Big(\frac{1}{\sqrt{N}}\Big).$$
\end{lemma}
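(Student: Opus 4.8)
The plan is to repeat, almost verbatim, the computation carried out for Lemma \ref{l5.3} and Lemma \ref{l5.4}. First I would recall the relevant cost functionals: $\cJ_i^l(\bar{u}_0,u_i,\bar{u}_{-i})$ is \eqref{C6e5} evaluated along the realized state of $\cA_i^l$ under the perturbation $u_i$ with every other agent keeping its decentralized control, so that $\cA_i^l$ tracks the \emph{realized} combination $\lambda X^{(i,N_l)}+(1-\lambda)\bar{X}_0$, whereas $J_i^l(\bar{u}_0,u_i)$ is \eqref{C6e8}, in which $\cA_i^l$ tracks the \emph{frozen} target $\lambda\bar{m}_X+(1-\lambda)\bar{X}_0$; the control penalty $\|u_i\|_R^2$ and the terminal cost $\|X_i(T)\|_H^2$ are common to both. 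Subtracting and using the identity $\|a-b\|_Q^2-\|a-c\|_Q^2=2\langle Q(a-c),c-b\rangle+\|c-b\|_Q^2$ with $a=X_i$, $c=\lambda\bar{m}_X+(1-\lambda)\bar{X}_0$ and $b=\lambda X^{(i,N_l)}+(1-\lambda)\bar{X}_0$, I obtain --- exactly parallel to \eqref{N7} --- a cross term $\hE\int_0^T\langle Q(X_i-(\lambda\bar{m}_X+(1-\lambda)\bar{X}_0)),\,\lambda(\bar{m}_X-X^{(i,N_l)})\rangle\,\ud t$ together with a purely quadratic term $\frac{1}{2}\hE\int_0^T\|\lambda(X^{(i,N_l)}-\bar{m}_X)\|_Q^2\,\ud t$, so that the whole discrepancy is driven by the single consistency error $X^{(i,N_l)}-\bar{m}_X$.

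Next I would estimate these two terms with H\"older's inequality exactly as in \eqref{N8}--\eqref{N9}. The cross term is at most $\big(\hE\int_0^T|X_i-(\lambda\bar{m}_X+(1-\lambda)\bar{X}_0)|^2\,\ud t\big)^{1/2}\big(\hE\int_0^T|\lambda Q(X^{(i,N_l)}-\bar{m}_X)|^2\,\ud t\big)^{1/2}$. For the first factor I would use Lemma \ref{l5.1} (which bounds $\hE[\sup_t|\bar{X}_i|^2]$ and hence, through $\bar{m}_X=\hE[\bar{X}_i]$, also $\sup_t|\bar{m}_X(t)|$) together with the a priori bound \eqref{N10}, i.e.\ $\sup_{1\le i\le N_l}\hE[\sup_t|X_i(t)|^2]\le M$ for the perturbed minor-leader state; the latter is available because, as noted just before the lemma, it suffices by the cost-representation argument to consider perturbations $u_i$ with $\hE\int_0^T|u_i(t)|^2\,\ud t\le M$. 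For the second factor, and likewise for the quadratic term, I would invoke Lemma \ref{l5.5}, which gives $\hE[\sup_{0\le t\le T}|X^{(i,N_l)}(t)-\bar{m}_X(t)|^2]\le M/N_l$. Combining these bounds yields $|\cJ_i^l(\bar{u}_0,u_i,\bar{u}_{-i})-J_i^l(\bar{u}_0,u_i)|\le M/\sqrt{N_l}=O(1/\sqrt{N})$, which is the claim.

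I do not expect a genuine obstacle: this is the third instance of a pattern already executed twice, and the only step requiring care is the bookkeeping that underlies \eqref{N10} and Lemma \ref{l5.5}. One must note that a single minor leader's deviation perturbs the empirical mean $X^{(N_l)}$ by only $O(1/N_l)$ --- indeed $X^{(i,N_l)}-\bar{X}^{(N_l)}=\frac{1}{N_l}(X_i-\bar{X}_i)$ --- so the states of $\cA_0$ and of the remaining minor leaders may be frozen at their decentralized values from \eqref{N1} up to an error of order at most $1/\sqrt{N}$, and one then obtains the uniform bound $\hE[\sup_t|X_i(t)|^2]\le M$ for the perturbed state by a Burkholder--Davis--Gundy plus Gronwall argument on its perturbed dynamics, precisely as in Lemma \ref{l5.1}. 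Should one instead wish to keep track of the gap between the realized major-leader state appearing in $\cJ_i^l$ and the limiting $\bar{X}_0$ used in $J_i^l$, this only adds a $(1-\lambda)(\bar{X}_0-X_0)$ contribution to the cross and quadratic parts above, which is again at most of order $1/\sqrt{N}$ and hence harmless.
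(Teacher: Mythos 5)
Your proposal is correct and follows essentially the same route as the paper: the identical quadratic expansion of the cost difference into a cross term plus a $\|\lambda(X^{(i,N_l)}-\bar{m}_X)\|_Q^2$ term, followed by H\"older's inequality, the a priori bound \eqref{N10} on the perturbed state, and Lemma \ref{l5.5} to conclude the $O(1/\sqrt{N})$ rate. Your additional remarks (the $O(1/N_l)$ effect of a single deviation on the empirical mean, and the bookkeeping of $X_0$ versus $\bar{X}_0$ in the tracking target) are points the paper's proof passes over silently but do not alter the argument.
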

\begin{proof}
Recall \eqref{C6e5} and \eqref{C6e8}, we have
\begin{equation}\label{N11}\begin{aligned}
&\cJ_i^l(\bar{u}_0,u_i,\bar{u}_{-i})-J_i^l(\bar{u}_0,u_i)\\
=&\frac{1}{2}\hE\Big\{\int_0^T \Big[\Big\|X_i-\big(\lambda\bar{X}^{(i,N_l)}+(1-\lambda)X_0\big)\Big\|_Q^2 -\Big\|X_i-\big(\lambda\bar{m}_X+(1-\lambda)X_0\big)\Big\|_Q^2\Big]\ud t\Big\},\\
=&\hE\Big\{\int_0^T \Big\langle Q\Big(X_i-\big(\lambda\bar{m}_X+(1-\lambda)X_0\big)\Big),\lambda(\bar{m}_X-\bar{X}^{(i,N_l)})\Big\rangle\ud t\Big\} +\frac{1}{2}\hE\Big\{\int_0^T \Big\|\lambda(\bar{X}^{(i,N_l)}-\bar{m}_X)\Big\|_Q^2\ud t\Big\}.\\
\end{aligned}\end{equation}
By same technique, applying H\"older inequality, Lemma \ref{l5.5}, and \eqref{N10}, there exists a constant $M$ independent of $N_l$ and $N_f$ such that
\begin{equation}\label{N12}\begin{aligned}
&\hE\Big\{\int_0^T \Big\langle\Big|Q\Big(X_i-\big(\lambda\bar{m}_X+(1-\lambda)X_0\big)\Big)\Big|,\Big|\lambda(\bar{m}_X-\bar{X}^{(i,N_l)})\Big|\Big\rangle\ud t\Big\}\\
\leq&\hE\Big\{\int_0^T \Big|X_i-\big(\lambda\bar{m}_X+(1-\lambda)X_0\big)\Big|^2\ud t\Big\}^{\frac{1}{2}}\hE\Big\{\int_0^T \Big|Q\lambda(\bar{m}_X-\bar{X}^{(i,N_l)})\Big|^2\ud t\Big\}^{\frac{1}{2}}\\
\leq&M\hE\Big\{\int_0^T \Big|Q\lambda(\bar{m}_X-\bar{X}^{(i,N_l)})\Big|^2\ud t\Big\}^{\frac{1}{2}}
\leq M\Big\{\hE\Big[\sup_{0\leq s\leq t}\big|\bar{X}^{(i,N_l)}-\bar{m}_X\big|^2(s)\Big]\int_{0}^{T}|Q\lambda|^2\ud t\Big\}^{\frac{1}{2}}\\
\leq&\frac{M}{\sqrt{N_l}}=O\Big(\frac{1}{\sqrt{N}}\Big).
\end{aligned}\end{equation}
\end{proof}

Taking the advantage of Lemma \ref{l5.3} and Lemma \ref{l5.6}, we can give the second part of the proof to the Theorem \ref{thm5.1}, i.e. the control strategies set $(\bar{u}_0,\bar{u}_1,\ldots,\bar{u}_{N_l},\bar{v}_1,\ldots,\bar{v}_{N_f})$ given by Theorem \ref{thm5.1} is an $\e$-Nash equilibrium of the mixed S-MM-MFG for minor leader agent.

\medskip

\textbf{Part B of the proof to Theorem \ref{thm5.1}}

\medskip
\noindent Combining Lemma \ref{l5.3} and Lemma \ref{l5.6}, we have
$$\cJ_i^l(\bar{u}_0,\bar{u}_i,\bar{u}_{-i})\leq J_i^l(\bar{u}_0,\bar{u}_i)+O\Big(\frac{1}{\sqrt{N}}\Big)\leq J_i^l(\bar{u}_0,u_i)+O\Big(\frac{1}{\sqrt{N}}\Big)\leq \cJ_i^l(\bar{u}_0,u_i,\bar{u}_{-i})+O\Big(\frac{1}{\sqrt{N}}\Big),$$
where the second inequality comes from the fact that $J_i^l(\bar{u}_0,\bar{u}_i)=\inf_{u_i\in\cU_i^c[0,T]}J_i^l(\bar{u}_0,u_i)$. Consequently, the Theorem \ref{thm5.1} holds for the minor leader agent with $\e=O\Big(\frac{1}{\sqrt{N}}\Big)$.
\endproof

\subsection{Follower agent's perturbation}
At last, we consider the following case: a given follower agent $\cA_j^f$ uses an alternative strategy $v_j\in\cV_j^c[0,T]$, the major leader agent uses $\bar{u}_0$, each minor leader agent $\cA_i^l$ uses $\bar{u}_i$ while other follower agents use the control $\bar{v}_{-j}$. In fact, by the representation of the cost functional (which is similar to the argument of major leader agent), to prove $(\bar{u}_0,\bar{u}_1,\ldots,\bar{u}_{N_l},\bar{v}_1,\ldots,\bar{v}_{N_f})$ is an $\e$-Nash equilibrium for the follower agent, we only need to consider the perturbation $v_j\in\cV_j^c[0,T]$ satisfying
$$\hE\Big[\int_{0}^{T}\big|v_j(t)\big|^2\ud t\Big]\leq M,$$
where $M$ is a constant independent of $N$. Then similar to Lemma \ref{l5.1}, we can show that
\begin{equation}\label{N13}
\sup_{1\leq j\leq N_f}\hE\Big[\sup_{0\leq t\leq T}\big|x_j(t)\big|^2\Big]\leq M.
\end{equation}

\begin{lemma}\label{l5.7}
Under assumptions {\rm(H1)-(H2)}, and if the conditions in the Theorem \ref{C6l1}, Theorem \ref{C6l2}, Theorem \ref{C6l3} hold, then there exists a constant $M$ independent of $N_l$ and $N_f$, such that
$$\hE\Big[\sup_{0\leq t\leq T}\big|x^{(j,N_f)}(t)-\bar{m}_x(t)\big|^2\Big]\leq \frac{M}{N_f},$$
where $x^{(j,N_f)}(t)=\frac{1}{N_f}\big(x_j(t)+\sum_{k\neq j}\bar{x}_k(t)\big)$.
\end{lemma}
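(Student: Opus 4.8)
The plan is to closely follow the proof of Lemma \ref{l5.5}, splitting the error into a finite-population averaging part and a consistency part. First I would use the triangle inequality together with $|a+b|^2\leq 2|a|^2+2|b|^2$ to write
\begin{equation*}
\hE\Big[\sup_{0\leq t\leq T}\big|x^{(j,N_f)}(t)-\bar{m}_x(t)\big|^2\Big]\leq 2\,\hE\Big[\sup_{0\leq t\leq T}\big|x^{(j,N_f)}(t)-\bar{x}^{(N_f)}(t)\big|^2\Big]+2\,\hE\Big[\sup_{0\leq t\leq T}\big|\bar{x}^{(N_f)}(t)-\bar{m}_x(t)\big|^2\Big],
\end{equation*}
so that it suffices to control each term on the right-hand side by a multiple of $1/N_f$.

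The second term is exactly the second estimate of Lemma \ref{l5.2}, so nothing new is needed there. For the first term, the key observation is the algebraic identity
\begin{equation*}
x^{(j,N_f)}(t)-\bar{x}^{(N_f)}(t)=\frac{1}{N_f}\big(x_j(t)-\bar{x}_j(t)\big),
\end{equation*}
which follows at once from $x^{(j,N_f)}(t)=\frac{1}{N_f}\big(x_j(t)+\sum_{k\neq j}\bar{x}_k(t)\big)$ and $\bar{x}^{(N_f)}(t)=\frac{1}{N_f}\sum_{k=1}^{N_f}\bar{x}_k(t)$. Combining this with the a priori bound \eqref{N13} on the perturbed follower state $x_j$ and the uniform bound on $\bar{x}_j$ from Lemma \ref{l5.1}, and using $N_f\geq 1$, one obtains
\begin{equation*}
\hE\Big[\sup_{0\leq t\leq T}\big|x^{(j,N_f)}(t)-\bar{x}^{(N_f)}(t)\big|^2\Big]\leq\frac{2}{N_f^2}\Big(\hE\Big[\sup_{0\leq t\leq T}|x_j(t)|^2\Big]+\hE\Big[\sup_{0\leq t\leq T}|\bar{x}_j(t)|^2\Big]\Big)\leq\frac{M}{N_f}
\end{equation*}
with $M$ independent of $N_l$ and $N_f$. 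Plugging both bounds back into the first display yields the claim.

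The one subtlety to keep in mind is the meaning of the states $\bar{x}_k$, $k\neq j$, in the definition of $x^{(j,N_f)}$: when $\cA_j^f$ deviates, the perturbed empirical mean appears in the dynamics \eqref{C6e3} of every other follower, so a priori those realized states are not literally the fully decentralized states of \eqref{N1}. If one tracks that coupling, the clean identity above is replaced by a linear SDE for $x^{(j,N_f)}-\bar{x}^{(N_f)}$ started at $0$ whose drift and diffusion coefficients carry only an $O(1/N_f)$ forcing coming from the single deviating coordinate, in complete analogy with \eqref{N2}; a Burkholder-Davis-Gundy estimate followed by Gronwall's inequality, fed by Lemma \ref{l5.1} and \eqref{N13}, still produces $\hE\big[\sup_{0\leq t\leq T}|x^{(j,N_f)}-\bar{x}^{(N_f)}|^2\big]\leq M/N_f$. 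This Gronwall step is the only genuine estimate in the argument; everything else is bookkeeping identical to Lemmas \ref{l5.2} and \ref{l5.5}.
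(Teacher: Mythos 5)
Your proof is correct and follows essentially the same route as the paper: the identity for $x^{(j,N_f)}-\bar{x}^{(N_f)}$, the a priori bound \eqref{N13}, and Lemma \ref{l5.2} combined by the triangle inequality. In fact your identity $x^{(j,N_f)}-\bar{x}^{(N_f)}=\frac{1}{N_f}(x_j-\bar{x}_j)$ is the correct version of the paper's slightly sloppy $\frac{1}{N_f}x_j$ (which omits the $-\bar{x}_j$ term), and your closing remark about the coupling of the other followers' states through the perturbed empirical mean addresses a subtlety the paper silently glosses over.
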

\begin{proof}
In fact, we have
$$x^{(j,N_f)}(t)-\bar{x}^{(N_f)}(t)=\frac{1}{N_f}x_j(t),$$
by \eqref{N13} , it yields
$$\hE\Big[\sup_{0\leq t\leq T}\big|x^{(j,N_f)}(t)-\bar{x}^{(N_f)}(t)\big|^2\Big]\leq \frac{M}{N_f}.$$
Combined with Lemma \ref{l5.2}, we can directly get
$$\hE\Big[\sup_{0\leq t\leq T}\big|x^{(j,N_f)}(t)-\bar{m}_x(t)\big|^2\Big]\leq \frac{M}{N_f}.$$
\end{proof}

\begin{lemma}\label{l5.8}
Under assumptions {\rm(H1)-(H2)}, and if the conditions in the Theorem \ref{C6l1}, Theorem \ref{C6l2}, Theorem \ref{C6l3} hold, for the follower agent's perturbation control $v_j$, we have
$$\Big|\cJ_j^f(\bar{u}_0,\bar{u},v_j,\bar{v}_{-j})-J_j^f(\bar{u}_0,v_j)\Big|=O\Big(\frac{1}{\sqrt{N}}\Big).$$
\end{lemma}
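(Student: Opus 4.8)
The plan is to copy the argument of Lemma~\ref{l5.6} almost verbatim. First I would recall the realised cost $\cJ_j^f(\bar u_0,\bar u,v_j,\bar v_{-j})$ from \eqref{C6e6} and the limiting cost $J_j^f(\bar u_0,v_j)$ from \eqref{C6e9}. Since $\bar u_0$, $v_j$, the control penalty $\|v_j\|_{\tilde R}^2$ and the terminal term $\|x_j(T)\|_{\tilde H}^2$ coincide in the two functionals, their difference comes only from replacing the empirical target $\tilde\lambda_1 X_0+\tilde\lambda_2\bar X^{(N_l)}+\tilde\lambda_3 x^{(j,N_f)}$ by its mean-field limit $\tilde\lambda_1 X_0+\tilde\lambda_2\bar m_X+\tilde\lambda_3\bar m_x$, where $x^{(j,N_f)}=\frac1{N_f}\big(x_j+\sum_{k\neq j}\bar x_k\big)$ is the follower average when $\cA_j^f$ deviates to $v_j$. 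Setting $\Delta:=\tilde\lambda_2(\bar m_X-\bar X^{(N_l)})+\tilde\lambda_3(\bar m_x-x^{(j,N_f)})$ and applying $\|a\|_{\tilde Q}^2-\|b\|_{\tilde Q}^2=\langle\tilde Q(a+b),a-b\rangle$ inside $\tfrac12\hE\int_0^T(\cdot)\ud t$, the difference splits — exactly as in \eqref{N11} — into a cross term $\hE\int_0^T\langle\tilde Q(x_j-(\tilde\lambda_1 X_0+\tilde\lambda_2\bar m_X+\tilde\lambda_3\bar m_x)),\Delta\rangle\ud t$, linear in $\Delta$, plus a purely quadratic remainder $\tfrac12\hE\int_0^T\langle\tilde Q\Delta,\Delta\rangle\ud t$.

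Next I would bound both pieces. The a priori estimate \eqref{N13}, together with Lemma~\ref{l5.1}, shows that $x_j-(\tilde\lambda_1 X_0+\tilde\lambda_2\bar m_X+\tilde\lambda_3\bar m_x)$ has $L^2(0,T)$-norm bounded by a constant independent of $N_l,N_f$ (recall $\bar m_X,\bar m_x$ are fixed and $\bar X^{(N_l)}$, $x^{(j,N_f)}$ are averages of uniformly $L^2$-bounded processes). For the discrepancy, Lemma~\ref{l5.2} gives $\hE[\sup_{0\leq t\leq T}|\bar X^{(N_l)}-\bar m_X|^2]\leq M/N_l$ and Lemma~\ref{l5.7} gives $\hE[\sup_{0\leq t\leq T}|x^{(j,N_f)}-\bar m_x|^2]\leq M/N_f$, hence $\hE[\sup_{0\leq t\leq T}|\Delta|^2]^{1/2}\leq M(1/\sqrt{N_l}+1/\sqrt{N_f})$. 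Then Cauchy--Schwarz (Hölder) on the cross term, and this same bound on the quadratic term, give $|\cJ_j^f(\bar u_0,\bar u,v_j,\bar v_{-j})-J_j^f(\bar u_0,v_j)|\leq M(1/\sqrt{N_l}+1/\sqrt{N_f})=O(1/\sqrt N)$ with $N=\min\{N_l,N_f\}$, as in \eqref{N12}.

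The computation is essentially routine; the only point requiring care — and the one genuine difference from Lemma~\ref{l5.6} — is that the deviating agent $\cA_j^f$'s own state $x_j$ now sits inside the empirical average $x^{(j,N_f)}$ driving its cost, so one must invoke Lemma~\ref{l5.7} (rather than Lemma~\ref{l5.2}) and, crucially, must first establish the uniform a priori bound \eqref{N13} on the perturbed state $x_j$ before the $\tilde\lambda_3$-contribution to $\Delta$ can be declared $O(1/\sqrt{N_f})$. As in Lemma~\ref{l5.2}, any residual difference between the realised major-leader state and its mean-field counterpart is itself of order $1/\sqrt{N_f}$ and is absorbed into the constant; everything else reproduces the estimates \eqref{N11}--\eqref{N12}.
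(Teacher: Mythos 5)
Your proposal is correct and follows the paper's own proof essentially verbatim: the same decomposition of the cost difference into a cross term and a quadratic term in $\Delta=\tilde{\lambda}_2(\bar{m}_X-\bar{X}^{(N_l)})+\tilde{\lambda}_3(\bar{m}_x-x^{(j,N_f)})$ (the paper's \eqref{N14}), the same H\"older step using the a priori bound \eqref{N13} (the paper's \eqref{N15}), and the same invocation of Lemma \ref{l5.2} and Lemma \ref{l5.7} to conclude $O(1/\sqrt{N})$ (the paper's \eqref{N16}).
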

\begin{proof}
Recall \eqref{C6e6} and \eqref{C6e9}, we have
\begin{equation}\label{N14}\begin{aligned}
&\cJ_j^f(\bar{u}_0,\bar{u},v_j,\bar{v}_{-j})-J_j^f(\bar{u}_0,v_j)\\
=&\frac{1}{2}\hE\Big\{\int_0^T \Big[\Big\|x_j-\big(\tilde{\lambda}_1\bar{X}_0+\tilde{\lambda}_2\bar{X}^{(N_l)}+\tilde{\lambda}_3 x^{(j,N_f)}\big)\Big\|_{\tilde{Q}}^2 -\Big\|x_j-\big(\tilde{\lambda}_1\bar{X}_0+\tilde{\lambda}_2\bar{m}_X+\tilde{\lambda}_3\bar{m}_x\big)\Big\|_{\tilde{Q}}^2\Big]\ud t\Big\},\\
=&\hE\Big\{\int_0^T \Big\langle\tilde{Q}\Big(x_j-\big(\tilde{\lambda}_1\bar{X}_0+\tilde{\lambda}_2\bar{m}_X+\tilde{\lambda}_3\bar{m}_x\big)\Big),
\tilde{\lambda}_2(\bar{m}_X-\bar{X}^{(N_l)})+\tilde{\lambda}_3(\bar{m}_x-x^{(j,N_f)})\Big\rangle\ud t\Big\}\\
&+\frac{1}{2}\hE\Big\{\int_0^T \Big\|\tilde{\lambda}_2(\bar{m}_X-\bar{X}^{(N_l)})+\tilde{\lambda}_3(\bar{m}_x-x^{(j,N_f)})\Big\|_{\tilde{Q}}^2\ud t\Big\}.\\
\end{aligned}\end{equation}
By H\"older inequality and \eqref{N13} there exists a constant $M$ independent of $N_l$ and $N_f$ such that
\begin{equation}\label{N15}\begin{aligned}
&\hE\Big\{\int_0^T \Big\langle\Big|\tilde{Q}\Big(x_j-\big(\tilde{\lambda}_1\bar{X}_0+\tilde{\lambda}_2\bar{m}_X+\tilde{\lambda}_3\bar{m}_x\big)\Big)\Big|,
\Big|\tilde{\lambda}_2(\bar{m}_X-\bar{X}^{(N_l)})+\tilde{\lambda}_3(\bar{m}_x-x^{(j,N_f)})\Big|\Big\rangle\ud t\Big\}\\
\leq&\hE\Big\{\int_0^T \Big|x_j-\big(\tilde{\lambda}_1\bar{X}_0+\tilde{\lambda}_2\bar{m}_X+\tilde{\lambda}_3\bar{m}_x\big)\Big|^2\ud t\Big\}^{\frac{1}{2}}\\
&\hE\Big\{\int_0^T \Big|\tilde{Q}\Big(\tilde{\lambda}_2(\bar{m}_X-\bar{X}^{(N_l)})+\tilde{\lambda}_3(\bar{m}_x-x^{(j,N_f)})\Big)\Big|^2\ud t\Big\}^{\frac{1}{2}}\\
\leq&M\hE\Big\{\int_0^T \Big|\tilde{Q}\Big(\tilde{\lambda}_2(\bar{m}_X-\bar{X}^{(N_l)})+\tilde{\lambda}_3(\bar{m}_x-x^{(j,N_f)})\Big)\Big|^2\ud t\Big\}^{\frac{1}{2}}\\
\end{aligned}\end{equation}
At last, same as the Lemma \ref{l5.3}, noting \eqref{N14}, \eqref{N15}, and Lemma \ref{l5.7}, there exists a constant $M$ independent of $N_l$ and $N_f$ such that
\begin{equation}\label{N16}\begin{aligned}
&\hE\Big\{\int_0^T \Big|\tilde{Q}\Big(\tilde{\lambda}_2(\bar{m}_X-\bar{X}^{(N_l)})+\tilde{\lambda}_3(\bar{m}_x-x^{(j,N_f)})\Big)\Big|^2\ud t\Big\}^{\frac{1}{2}}\\
\leq&\Big\{\hE\Big[\sup_{0\leq s\leq t}\big|\bar{X}^{(N_l)}-\bar{m}_X\big|^2(s)\Big]\int_{0}^{T}|\tilde{Q}\tilde{\lambda}_2|^2\ud t\Big\}^{\frac{1}{2}} \Big\{\hE\Big[\sup_{0\leq s\leq t}\big|x^{(j,N_f)}-\bar{m}_x\big|^2(s)\Big]\int_{0}^{T}|\tilde{\lambda}_3|^2\ud t\Big\}^{\frac{1}{2}}\\
\leq&M\Big(\frac{1}{\sqrt{N_l}}+\frac{1}{\sqrt{N_f}}\Big)=O\Big(\frac{1}{\sqrt{N}}\Big).
\end{aligned}\end{equation}
\end{proof}

Taking the advantage of Lemma \ref{l5.3} and Lemma \ref{l5.8}, we can give the last part of the proof to the Theorem \ref{thm5.1}, i.e. the control strategies set $(\bar{u}_0,\bar{u}_1,\ldots,\bar{u}_{N_l},\bar{v}_1,\ldots,\bar{v}_{N_f})$ given by Theorem \ref{thm5.1} is an $\e$-Nash equilibrium of the mixed S-MM-MFG for follower agent.

\medskip

\textbf{Part C of the proof to Theorem \ref{thm5.1}}

\medskip
\noindent Combining Lemma \ref{l5.3} and Lemma \ref{l5.8}, we have
$$\cJ_j^f(\bar{u}_0,\bar{u},\bar{v}_j,\bar{v}_{-j})\leq J_j^f(\bar{u}_0,\bar{v}_j)+O\Big(\frac{1}{\sqrt{N}}\Big)\leq J_j^f(\bar{u}_0,v_j)+O\Big(\frac{1}{\sqrt{N}}\Big)\leq \cJ_j^f(\bar{u}_0,\bar{u},v_j,\bar{v}_{-j})+O\Big(\frac{1}{\sqrt{N}}\Big),$$
where the second inequality comes from the fact that $J_j^f(\bar{u}_0,\bar{v}_j)=\inf_{v_j\in\cV_j^c[0,T]}J_j^f(\bar{u}_0,v_j)$. Consequently, the Theorem \ref{thm5.1} holds for the follower agent with $\e=O\Big(\frac{1}{\sqrt{N}}\Big)$. Finally, combined with the Part A, Part B, we complete the proof to Theorem \ref{thm5.1}.
\endproof

\section{Special Case}
In this section, we will give an example to show how the major leader influences the whole system. We now look at a special case in which the major leader does not appear. In this case the problem is reduced to a leader-follower mean-field LQG game problem. Let us still regard it as if the major leader does appear but does not affect the game at all, i.e., we assume that
\begin{equation}\label{s1}
  A_0=B_0=C_0=D_0=E_0^1=F_0^1=E_0^2=F_0^2=0,\qq Q_0=0,\qq H_0=0,\qq R_0=I.
\end{equation}
Moreover, let $\l=1$, $\wt{\l}_1=0$, $\wt{\l}_2=\wt{\l}$, and $\wt{\l}_3=1-\wt{\l}$. By observation, we can find that the coupled mean-field term between the leaders and the followers appear on the cost functional of the followers. Thus, it is truly a leader-follower mean-field LQG game problem. By the analysis above, we can get the CC equation of the special case as follows.
\begin{equation}\left\{\begin{aligned}\label{s2}
  &\ud\bar{X}=\{A\bar{X}-BR^{-1}(B^\top \bar{Y}+D^\top \bar{Z})+E_1\hE[\bar{X}]\}\ud t+\{C\bar{X}-DR^{-1}(B^\top \bar{Y}+D^\top \bar{Z})+E_2\hE[\bar{X}]\}\ud W(t)\\
  &\ud\bar{x}=\{\tilde{A}\bar{x}-\tilde{B}\tilde{R}^{-1}(\tilde{B}^\top\bar{y}+\tilde{D}^\top\bar{z})+F_1\hE[\bar{x}]\}\ud t+\{\tilde{C}\bar{x}-\tilde{D}\tilde{R}^{-1}(\tilde{B}^\top\bar{y}+\tilde{D}^\top\bar{z})+F_2\hE[\bar{x}]\}\ud\widetilde{W}(t)\\
  &\ud K=\{\tilde{A}K+\tilde{B}\tilde{R}^{-1}\tilde{B}^\top p+\tilde{B}\tilde{R}^{-1}\tilde{D}^\top q\}\ud t+\{\tilde{C}K+\tilde{D}\tilde{R}^{-1}\tilde{B}^\top p+\tilde{D}\tilde{R}^{-1}\tilde{D}^\top q\}\ud\wt{W}(t)\\
  &\ud\bar{Y}=-\Big\{A^\top\bar{Y}+C^\top\bar{Z}+Q\big(\bar{X}-\hE[\bar{X}]\big)\Big\}\ud t+\bar{Z}\ud W(t),\\
  &\ud\bar{y}=-\Big\{\tilde{A}^\top\bar{y}+\tilde{C}^\top\bar{z}+ \tilde{Q}\Big(\bar{x}-\big(\tilde{\lambda} \hE[\bar{X}]+(1-\tilde{\lambda})\hE[\bar{x}]\big)\Big)\Big\}\ud t+\bar{z}\ud\wt{W}(t),\\
  &\ud p=-\{\tilde{A}^\top p+\tilde{C}^\top q+F_1^{\top}\hE[p]+F_2^{\top}\hE[q]+\tilde{Q}(1-\tilde{\l})\hE[K]-\tilde{Q}K\}\ud t+q\ud\wt{W}(t),\\
  &\bar{X}(0)=\xi,\q \bar{x}(0)=\zeta,\q K(0)=0,\\
  &\bar{Y}(T)=H\bar{X}(T),\q \bar{y}(T)=\tilde{H}\bar{x}(T),\q p(T)=-\tilde{H}K(T).\\
\end{aligned}\right.\end{equation}
Furthermore, we find that the equation of $K(\cdot)$ and $(p(\cdot),q(\cdot))$ is coupled together but decoupled with other equations and if the consistency condition equation admits a unique adapted solution then $K\equiv0$, $p\equiv0$ and $q\equiv0$ is the trivial solution to the equation. Then the CC equation of the special case is simplified as
\begin{equation}\left\{\begin{aligned}\label{s3}
  &\ud\bar{X}=\{A\bar{X}-BR^{-1}(B^\top \bar{Y}+D^\top \bar{Z})+E_1\hE[\bar{X}]\}\ud t+\{C\bar{X}-DR^{-1}(B^\top \bar{Y}+D^\top \bar{Z})+E_2\hE[\bar{X}]\}\ud W(t)\\
  &\ud\bar{x}=\{\tilde{A}\bar{x}-\tilde{B}\tilde{R}^{-1}(\tilde{B}^\top\bar{y}+\tilde{D}^\top\bar{z})+F_1\hE[\bar{x}]\}\ud t+\{\tilde{C}\bar{x}-\tilde{D}\tilde{R}^{-1}(\tilde{B}^\top\bar{y}+\tilde{D}^\top\bar{z})+F_2\hE[\bar{x}]\}\ud\widetilde{W}(t)\\
  &\ud\bar{Y}=-\Big\{A^\top\bar{Y}+C^\top\bar{Z}+Q\big(\bar{X}-\hE[\bar{X}]\big)\Big\}\ud t+\bar{Z}\ud W(t),\\
  &\ud\bar{y}=-\Big\{\tilde{A}^\top\bar{y}+\tilde{C}^\top\bar{z}+ \tilde{Q}\Big(\bar{x}-\big(\tilde{\lambda} \hE[\bar{X}]+(1-\tilde{\lambda})\hE[\bar{x}]\big)\Big)\Big\}\ud t+\bar{z}\ud\wt{W}(t),\\
  &\bar{X}(0)=\xi,\q \bar{x}(0)=\zeta,\\
  &\bar{Y}(T)=H\bar{X}(T),\q \bar{y}(T)=\tilde{H}\bar{x}(T).\\
\end{aligned}\right.\end{equation}
Next, by the decoupling for the open-loop strategy, we get the Riccati equations \eqref{C6e18} and \eqref{C6e19}. However, it is still hard to get the explicit solution to the Riccati equations. So we consider the $1$-dimensional example 5.1 as follows.

\emph{Example 5.1} Let $n=m_1=m_2=m_3=1$, \eqref{s1} holds, and let
\begin{equation*}\left\{\begin{aligned}
&A=\tilde{A}=0,\qq &&B=\tilde{B}=0,\qq &&E_1=F_1=0,\\
&C=\tilde{C}=0, &&D=\tilde{D}=1, &&E_2=F_2=0,\\
&Q=\tilde{Q}=1, &&R=\tilde{R}=1, &&H=\tilde{H}=0,\\
\end{aligned}\right.\end{equation*}
we have the state equation
\begin{equation*}\left\{\begin{aligned}
&\ud X_i(t)=u_i(t)\ud W_i(t),\\
&\ud x_j(t)=v_j(t)\ud \wt{W}_j(t),\\
& X_i(0)=\xi_i,\qq x_j(0)=\zeta_j.
\end{aligned}\right.\end{equation*}
The cost functional reads
\begin{equation*}\begin{aligned}
\cJ_i^l(u_i(\cdot),\textbf{u}_{-i}(\cdot))=&\frac{1}{2}\mathbb{E}\Big\{\int_0^T \Big(\Big|X_i(t)- X^{(N_l)}(t)\Big|^2+|u_i(t)|^2\Big)\ud t\Big\},
\end{aligned}\end{equation*}
and
\begin{equation*}\begin{aligned}
\cJ_j^f(\textbf{u}(\cdot),v_j(\cdot),\textbf{v}_{-j}(\cdot))=&\frac{1}{2}\mathbb{E}\Big\{\int_0^T \Big(\Big|x_j(t)-\big(\tilde{\lambda} X^{(N_l)}(t)+(1-\tilde{\lambda})x^{(N_f)}(t)\big)\Big|^2+|v_j(t)|^2\Big)\ud t\Big\},
\end{aligned}\end{equation*}
and the Riccati equation
\begin{equation}\left\{\begin{aligned}\label{s4}
&\dot{P}+\textbf{Q}=0,\\
&P(T)=0,
\end{aligned}\right.
\qq\qq\mathrm{and}\qq\qq
\left\{\begin{aligned}
&\dot{\Pi}+\textbf{Q}+\bar{\textbf{Q}}=0,\\
&\Pi(T)=0,
\end{aligned}\right.\end{equation}
where
\begin{equation*}
\textbf{Q}=\begin{pmatrix}\begin{smallmatrix}0&0&0&0\\ 0&-1&0&0\\ 0&0&-1&0\\ 0&0&0&1\end{smallmatrix}\end{pmatrix},\qq\qq
\bar{\textbf{Q}}=\begin{pmatrix}\begin{smallmatrix}0&0&0&0\\ 0&1&0&0\\ 0&\tilde{\lambda}&1-\tilde{\lambda}&0\\ 0&0&0&\tilde{\lambda}-1\end{smallmatrix}\end{pmatrix},
\end{equation*}
so it can be easily solved out that
\begin{equation*}
P(t)=\begin{pmatrix}\begin{smallmatrix}0&0&0&0\\ 0&t-T&0&0\\ 0&0&t-T&0\\ 0&0&0&T-t\end{smallmatrix}\end{pmatrix},\qq\qq \Pi(t)=\begin{pmatrix}\begin{smallmatrix}0&0&0&0\\ 0&0&0&0\\ 0&T-\tilde{\l}t&\tilde{\l}t-T&0\\ 0&0&0&T-\tilde{\l}t\end{smallmatrix}\end{pmatrix}.
\end{equation*}
And the optimal control $\bar{u}_i=-\bar{Z}=0$, $\bar{v}_j=-\bar{z}=0$, subject to the optimal cost functional $\cJ_i^l=\frac{1}{2}\xi_i^2$, and $\cJ_j^f=\frac{1}{2}\zeta_j^2$.

\end{document}